\newcommand{\R}{\mathds{R}}
\newcommand{\C}{\mathds{C}}
\newcommand{\N}{\mathds{N}}
\newcommand{\Z}{\mathds{Z}}
\renewcommand{\H}{\mathds{H}}
\renewcommand{\S}{\mathds{S}}
\newcommand{\dd}{\mathrm{d}}
\newcommand{\g}{\mathrm{g}}
\newcommand{\gp}{\mathrm{g}_{\mathrm{prod}}}
\newcommand{\gtriv}{\mathrm{g}_{\mathrm{triv}}}
\newcommand{\h}{\mathrm{h}}
\newcommand{\ind}{i_{\mathrm{Morse}}}
\newcommand{\gen}{\operatorname{gen}}
\DeclareMathOperator{\Vol}{Vol}
\newcommand\pgfmathsinandcos[3]{%
  \pgfmathsetmacro#1{sin(#3)}%
  \pgfmathsetmacro#2{cos(#3)}%
}
\newcommand\LongitudePlane[3][current plane]{%
  \pgfmathsinandcos\sinEl\cosEl{#2} 
  \pgfmathsinandcos\sint\cost{#3} 
  \tikzset{#1/.estyle={cm={\cost,\sint*\sinEl,0,\cosEl,(0,0)}}}
}
\newcommand\LatitudePlane[3][current plane]{%
  \pgfmathsinandcos\sinEl\cosEl{#2} 
  \pgfmathsinandcos\sint\cost{#3} 
  \pgfmathsetmacro\yshift{\cosEl*\sint}
  \tikzset{#1/.estyle={cm={\cost,0,0,\cost*\sinEl,(0,\yshift)}}} %
}
\newcommand\DrawLongitudeCircle[2][1]{
  \LongitudePlane{\angEl}{#2}
  \tikzset{current plane/.prefix style={scale=#1}}
  \pgfmathsetmacro\angVis{atan(sin(#2)*cos(\angEl)/sin(\angEl))} %
  \draw[current plane,thin,black] (\angVis:1) arc (\angVis:\angVis+180:1);
  \draw[current plane,thin,dashed] (\angVis-180:1) arc (\angVis-180:\angVis:1);
}
\newcommand\DrawLatitudeCircleblue[2][1]{
  \LatitudePlane{\angEl}{#2}
  \tikzset{current plane/.prefix style={scale=#1}}
  \pgfmathsetmacro\sinVis{sin(#2)/cos(#2)*sin(\angEl)/cos(\angEl)}
  \pgfmathsetmacro\angVis{asin(min(1,max(\sinVis,-1)))}
\draw[current plane,thick,black,blue] (\angVis:1) arc (\angVis:-\angVis-180:1);
\draw[current plane,thick,dashed,blue] (180-\angVis:1) arc (180-\angVis:\angVis:1);
}
\tikzset{%
  >=latex,
  inner sep=0pt,%
  outer sep=2pt,%
  mark coordinate/.style={inner sep=0pt,outer sep=0pt,minimum size=3pt,
    fill=black,circle}%
}
\newtheorem{theorem}{Theorem}
\newtheorem{definition}[theorem]{Definition}
\newtheorem{lemma}[theorem]{Lemma}
\newtheorem{corollary}[theorem]{Corollary}
\newtheorem{proposition}[theorem]{Proposition}
\newtheorem*{sfyp}{\sc Singular fractional Yamabe problem}
\newtheorem{mainthm}{\sc Theorem}
\newtheorem*{mainthm*}{\sc Theorem}
\theoremstyle{remark}
\newtheorem{remark}[theorem]{Remark}
\numberwithin{equation}{section}
\numberwithin{theorem}{section}
\title[Multiplicity in the singular fractional Yamabe problem on spheres]{Multiplicity of singular solutions to the fractional Yamabe problem on spheres}
\subjclass{35R11, 35J30, 35B32, 53C18, 53C21, 58J40, 58J55}
\author[R. G. Bettiol]{Renato G. Bettiol}
\address{\!\!\!\begin{tabular}{lll}
CUNY Lehman College & & CUNY Graduate Center \\
Department of Mathematics & & Department of Mathematics \\
250 Bedford Park~Blvd W & & 365 Fifth Avenue \\
Bronx, NY, 10468, USA & & New York, NY, 10016, USA
\end{tabular}
}
\email{r.bettiol@lehman.cuny.edu}
\author[M. Gonz\'alez]{Mar\'ia del Mar Gonz\'alez}
\address{
Universidad Autonoma de Madrid \newline \indent Departamento de Matem\'aticas \newline \indent Campus de Cantoblanco and ICMAT \newline \indent 28049 Madrid, Spain}
\email{mariamar.gonzalezn@uam.es}
\author[A. Maalaoui]{Ali Maalaoui}
\address{Clark University \newline \indent Department of Mathematics \newline \indent Worcester, MA, 01610, USA}
\email{amaalaoui@clarku.edu}
\date{\today}
\begin{document}
\begin{abstract}
We prove nonuniqueness results for complete metrics with constant positive fractional curvature conformal to the round metric on $\S^n\setminus \S^k$, using bifurcation techniques. These are singular (positive) solutions to a non-local equation with critical non-linearity.
\end{abstract}

\maketitle

\section{Introduction}

One of the main pillars of modern Conformal Geometry is the Yamabe problem:\-
finding a metric with constant scalar curvature in a given conformal class on a closed manifold $M^n$, $n\geq3$, which corresponds to finding positive solutions to a nonlinear second-order elliptic PDE. A fourth-order version of this problem, for the Paneitz--Branson $Q$-curvature, has been extensively studied, and leads to additional geometric and topological information. Recently, a one-parameter family of curvatures with good conformal properties generalizing the above has been introduced, see e.g.~\cite{Chang-Gonzalez,CC} and the survey \cite{Gonzalez:survey}. This so-called \emph{$\gamma$-fractional curvature}, denoted by $Q_\gamma$ for $\gamma\in(0,\frac{n}{2})$, is a non-local object of order $2\gamma$ defined on $M^n$ when it is the conformal infinity of a conformally compact Einstein manifold, or, more generally, the boundary at infinity of an asymptotically hyperbolic manifold. For particular values of the parameter, $Q_\gamma$ recovers classical curvatures; for instance, $Q_1$ is a multiple of the scalar curvature, $Q_2$ is the usual $Q$-curvature, and $Q_{\frac{1}{2}}$ can be interpreted as the mean curvature of $M$ with respect to its filling.

The fractional Yamabe problem consists of finding a metric in a given conformal class $(M^n,[\g])$ with constant $\gamma$-fractional curvature $Q_\gamma$. We remark that, in the particular case that $\gamma=\frac{1}{2}$, this is known as the Cherrier--Escobar problem for manifolds with boundary, see \cite{Mayer-Ndiaye1} and the references therein.  In general, from the PDE point of view, a positive solution to the fractional Lane--Emden equation
\begin{equation*}
\phantom{\quad\text{on }M,}
P_{\gamma}^{\g}u=Q_{\gamma}^{\g_{u}}\, u^{\frac{n+2\gamma}{n-2\gamma}}\quad\text{on }M
\end{equation*}
yields a metric $\g_u=u^{\frac{4}{n-2\gamma}}\g$ whose $\gamma$-fractional curvature is $Q_{\gamma}^{\g_{u}}$. Here, $P_\gamma^\g$ is the conformal fractional Laplacian operator, a pseudo-differential operator on $M$ of order $2\gamma$ associated to $Q_\gamma$.
In the compact setting, the fractional Yamabe problem can be solved variationally, using an energy functional for $P_\gamma^\g$; this was the initial approach adopted in \cite{MQ}. Nowadays, there are many works exploiting this variational structure \cite{Gonzalez-Wang,Kim-Musso-Wei,Ndiaye-Sire-Sun,Mayer-Ndiaye-DCDS,Mayer-Ndiaye2}, as well as some articles using flows \cite{Jin-Xiong,Daskalopoulos-Sire-Vazquez,Chan-Sire-Sun}.

In this paper, we are interested the following \emph{singular version} of the fractional Yamabe problem, which is much less understood due to the non-local character of the operator; in particular, because it is not well-defined across singularities.

\begin{sfyp}
    Let $(M^n,\g)$ be a closed Riemannian manifold, $\Lambda\subset M$ be a closed subset, and $0<\gamma<\frac{n}{2}$. Find a complete metric on $M\setminus \Lambda$ that is conformal to $\g$ and has constant $\gamma$-fractional curvature.
\end{sfyp}

We restrict our attention to the positive curvature case, which is usually the most involved one. It is known that, on locally conformally flat manifolds, under some additional conditions, $Q_\gamma>0$ implies that the singular set $\Lambda$ has Hausdorff dimension $k<\frac{n}{2}-\gamma$, see \cite{GMS,Zhang}. This is reminiscent of the classical work of Schoen--Yau~\cite{Schoen-Yau} on singular metrics with positive scalar curvature, for which the maximal dimension of the singular set is $\frac{n}{2}-1$. Unfortunately, a $\gamma$-fractional curvature version of their classification results for locally conformally flat metrics of positive scalar curvature as quotients of the sphere by Kleinian groups is still far out of reach. Furthermore, complete metrics of positive constant $\gamma$-fractional curvature on $\R^n\setminus \Lambda$ have only been constructed in a few special cases with strong assumptions on the singular set; e.g., if $\Lambda$ is a finite set of points \cite{Ao-DelaTorre-Gonzalez-Wei:gluing}, or if $\Lambda$ is a prescribed submanifold of dimension $k<\frac{n}{2}-\gamma$ in \cite{ACDFGW,ACDFGW:survey} and in the critical dimension $k=\frac{n}{2}-\gamma$ in \cite{Chan-DelaTorre2}. More general constructions are available if one drops the completeness assumption and  considers weak solutions \cite{Ao-Chan-Gonzalez-Wei}.

A key example where solutions trivially exist is the round sphere $M=\S^n$ with a totally geodesic singular set $\Lambda=\S^k$, $0<k<n$. Using the conformal equivalence
\begin{equation*}
  \S^n\setminus\S^k\cong\S^{n-k-1}\times\H^{k+1},
\end{equation*}
we may pull back the standard product metric $\gp=\g_{\S^{n-k-1}}+\g_{\H^{k+1}}$ to obtain a complete metric on $\S^n\setminus \S^k$ with constant $\gamma$-fractional curvature equal to
\begin{equation}\label{eq:Qgamma-prodmetric}
  Q_\gamma(n,k) :=4^{\gamma}\, \frac{\Gamma\big(\frac{n+2\gamma}{4}\big)}{\Gamma\big(\frac{n-2\gamma}{4}\big)} \,\frac{\Gamma\big(\frac{n-2k+2\gamma}{4}\big)}{\Gamma\big(\frac{n-2k-2\gamma}{4}\big)},
\end{equation}
see \Cref{sec:trivialsolution}.
Note that $Q_\gamma(n,k)>0$ if $k<\frac{n}{2}-\gamma$, but the converse does not hold.

There is a fundamental dichotomy between the cases $k=1$ and $k>1$ regarding \emph{periodic} solutions on $\S^n\setminus\S^k$, that is, solutions which are invariant under a discrete cocompact group of conformal transformations. On the one hand, if $k=1$, then compact quotients of $\S^n\setminus\S^1\cong\S^{n-2}\times\H^2$ admit large families of pairwise non-conformal metrics, all of which are locally isometric to $\gp$. On the other hand, if $k>1$, then Mostow rigidity implies that there is a unique metric locally isometric to $\gp$ on each compact quotient of $\S^n\setminus\S^k\cong\S^{n-k-1}\times\H^{k+1}$.

Our first main result concerns the case $k=1$, in which multiplicity of solutions to the singular fractional Yamabe problem is established using bifurcation techniques:

\begin{mainthm}\label{mainthmA}
For all $n\geq4$ and $0<\gamma<c_n$, where $c_n$ is an increasing sequence asymptotic to $\frac{n}{2}-1$, there are uncountably many pairwise nonhomothetic periodic solutions to the singular fractional Yamabe problem on $\S^{n}\setminus \S^{1}$ that bifurcate from the trivial solution and have $\gamma$-fractional curvature arbitrarily close to $Q_\gamma(n,1)$.
\end{mainthm}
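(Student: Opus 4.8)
The plan is to reduce the singular fractional Yamabe problem on $\S^n\setminus\S^1\cong\S^{n-2}\times\H^2$ to a bifurcation problem on a compact quotient, and then to detect bifurcation by tracking how the spectrum of the relevant linearized operator crosses zero as a geometric parameter varies. First I would fix a discrete cocompact group $\Gamma$ of isometries of $\H^2$ and work on $N_\Gamma:=\S^{n-2}\times(\H^2/\Gamma)$, which carries the locally homogeneous metric induced by $\gp$ and is a closed manifold; solutions to the fractional Yamabe equation on $N_\Gamma$ that are invariant in this way correspond to periodic solutions upstairs on $\S^n\setminus\S^1$. The key point for $k=1$ — already highlighted in the excerpt — is that the Teichm\"uller space of hyperbolic structures on $\H^2/\Gamma$ is a nontrivial moduli space, so we obtain a continuous family of metrics $\g_t$ on $N_\Gamma$, all locally isometric to $\gp$ and hence all with the \emph{same} constant $\gamma$-fractional curvature $Q_\gamma(n,1)$, but pairwise nonhomothetic for generic $t$. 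The trivial solution $u\equiv 1$ solves the equation $P_\gamma^{\g_t}u=Q_\gamma(n,1)\,u^{\frac{n+2\gamma}{n-2\gamma}}$ for every $t$, giving a branch of trivial solutions along the family.

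Next I would set up the bifurcation. Linearizing the fractional Lane--Emden equation at $u\equiv1$ yields the operator
\begin{equation*}
L_{\gamma,t}:=P_\gamma^{\g_t}-Q_\gamma(n,1)\,\tfrac{n+2\gamma}{n-2\gamma}\,\mathrm{Id},
\end{equation*}
acting on functions on $N_\Gamma$; bifurcation from the trivial branch at a parameter $t_0$ is governed, via a Lyapunov--Schmidt reduction or the classical Crandall--Rabinowitz / $\mathds{Z}_2$-equivariant degree machinery, by a change in the number of nonpositive (equivalently, negative plus zero) eigenvalues of $L_{\gamma,t}$ as $t$ crosses $t_0$. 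Because $P_\gamma^{\gp}$ on a product $\S^{n-k-1}\times\H^{k+1}$ diagonalizes over spherical harmonics on $\S^{n-2}$ with explicit symbol in terms of Gamma functions — this is exactly the computation behind \eqref{eq:Qgamma-prodmetric} — the spectrum of $L_{\gamma,t}$ decomposes as a sum over: (i) the discrete spectrum of the scalar Laplacian $\Delta_{\H^2/\Gamma}$, which moves continuously and nontrivially as the hyperbolic structure $t$ varies (the small eigenvalues of hyperbolic surfaces are genuinely nonconstant on Teichm\"uller space), and (ii) the eigenvalues of the Laplacian on $\S^{n-2}$, which are fixed. Combining the two and inserting the explicit fractional symbol, I would show that for $0<\gamma<c_n$ — with $c_n\nearrow\frac n2-1$ arising precisely as the threshold below which the relevant Gamma-function factors make the $\ell=0$ (constant on $\S^{n-2}$) mode contribute a \emph{negative} term that the $\H^2$-eigenvalue can cancel — there are infinitely many parameters $t$ at which an eigenvalue of $L_{\gamma,t}$ crosses zero with odd (indeed, by $\mathds{Z}_2$-symmetry, suitably nontrivial) change in Morse index, hence bifurcation occurs at each such $t$.

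Finally, I would promote these bifurcation instances to the stated conclusion. At each bifurcation parameter we obtain nontrivial solutions $u\not\equiv1$ near the trivial branch; since $u^{\frac{4}{n-2\gamma}}\g_t$ is then a complete metric on $\S^n\setminus\S^1$ conformal to the round metric with constant $\gamma$-fractional curvature $Q_\gamma(n,1)$, and the solutions form a continuum accumulating at the trivial branch, their curvatures are arbitrarily close to $Q_\gamma(n,1)$ (in fact, after normalization, one may arrange them exactly equal or arbitrarily close, depending on whether one rescales). Uncountability follows because the bifurcating set, being a continuum in an infinite-dimensional space, has the cardinality of the continuum; pairwise non-homotheticity is checked by a standard argument distinguishing the metrics via, e.g., their isometry groups or volume-type invariants, using that only countably many could be homothetic to any fixed one. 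The main obstacle I anticipate is Step (ii)–(iii): making the eigenvalue-crossing argument rigorous requires (a) precise control of the conformal fractional Laplacian $P_\gamma^{\gp}$ on the noncompact product and its descent to $N_\Gamma$ — in particular knowing it is a genuinely pseudodifferential, spectrally well-behaved operator whose symbol is the Gamma-function expression above — and (b) a quantitative statement that the small eigenvalues of hyperbolic surfaces vary enough to force infinitely many crossings, together with verification of the transversality/odd-multiplicity hypothesis needed to invoke the bifurcation theorem. Pinning down the exact constant $c_n$ and its monotone convergence to $\frac n2-1$ is the delicate part of this analysis.
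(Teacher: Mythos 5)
Your overall strategy coincides with the paper's: pass to compact quotients $\S^{n-2}\times\Sigma^2$, diagonalize $P_\gamma^{\gp}$ over spherical harmonics and the Laplace spectrum of the hyperbolic surface via the explicit Gamma-function symbol, and detect bifurcation from the trivial branch through jumps of the Morse index as the hyperbolic structure varies. However, the step you defer as ``the delicate part'' is in fact the mathematical core, and as described it has a gap. Saying that the small eigenvalues of hyperbolic surfaces ``vary enough to force infinitely many crossings'' is not sufficient: by Otal--Rosas, $\lambda_{2\gen(\Sigma^2)-2}\geq\frac14$ for \emph{every} hyperbolic metric, so the eigenvalues $\Theta_{0,\ell}(\h)$ of the $m=0$ modes cannot be pushed below the value $\Xi=\Theta(a_0,0)$ corresponding to $\lambda=\frac14$. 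What one \emph{can} do, by Buser's theorem, is pinch arbitrarily many $\lambda_\ell$ down to $\frac14+\varepsilon$, driving arbitrarily many $\Theta_{0,\ell}(\h)$ arbitrarily close to $\Xi$. Whether these then fall below the Jacobi threshold $\frac{n+2\gamma}{n-2\gamma}\Theta_{0,0}$ is exactly the inequality $\Xi<\frac{n+2\gamma}{n-2\gamma}\Theta_{0,0}$, and \emph{this} is what defines $c_n$ (Proposition \ref{prop:cn}) --- not a sign condition on the $\ell=0$ mode as you suggest. One also has to rule out contributions from the modes nonconstant on $\S^{n-2}$, which requires the monotonicity of the symbol and the identity $\Theta_{1,0}=\frac{n+2\gamma-2}{n-2\gamma-2}\Theta_{0,0}>\frac{n+2\gamma}{n-2\gamma}\Theta_{0,0}$ (Proposition \ref{prop:Morse-idx-nullity}). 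On the other hand, your worry about transversality and odd crossing numbers is unnecessary: since the problem is variational and the Jacobi operator is Fredholm, a jump of Morse index between two \emph{nondegenerate} endpoints (nondegeneracy being achieved by a generic perturbation as in Lemma \ref{lemnon}) already forces a bifurcation instant, with no simple-eigenvalue or transversal-crossing hypothesis.

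A second genuine gap is the uncountability argument. The variational bifurcation criterion produces a sequence of nontrivial solutions accumulating at the bifurcation instant, not a continuum, so you cannot conclude cardinality of the continuum from ``the bifurcating set is a continuum in an infinite-dimensional space.'' The paper instead obtains uncountably many solutions by running the entire construction for an uncountable family of starting hyperbolic metrics $\h$ along a path in $\mathcal H(\Sigma^2)$; the resulting bifurcating metrics are pairwise nonhomothetic because they lie arbitrarily close to product metrics built from non-isometric hyperbolic surfaces, not because of a counting argument about homothety classes.
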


The definition of $(c_n)_{n\geq4}$ and approximate values for some small $n$ are given in \Cref{prop:cn}. In particular, $c_n>1$ for $n\geq5$, and $c_n>2$ for $n\geq7$, so Theorem~\ref{mainthmA} recovers earlier multiplicity results for solutions to the singular fractional Yamabe problem on $\S^n\setminus\S^1$ obtained for $\gamma=1$ if $n\geq5$ in~\cite{BPS}, and for $\gamma=2$ if $n\geq7$ in~\cite{bps-imrn}.
Similarly to these bifurcation results, the proof of Theorem~\ref{mainthmA} relies on detecting jumps of the Morse index along paths of metrics on compact quotients $\S^{n-2}\times\Sigma^{2}$ of $\S^n\setminus\S^1$ as the systole of the hyperbolic surface $\Sigma^2$ degenerates. This is achieved with a delicate analysis of the spectrum of the operator $P_\gamma^{\gp}$ making use of the assumptions $n\geq4$ and $\gamma<c_n$, see \Cref{prop:eigenvalues-P,propimp,prop:cn}.
While we expect the conclusion of Theorem~\ref{mainthmA} to hold for all $0<\gamma<\frac{n}{2}-1$, our methods fail if $\gamma$ is in the interval $\big[c_n,\frac{n}{2}-1\big)$, whose length decreases to zero as $n\nearrow+\infty$.

Our second main result deals with the case of higher dimensional singular sets:

\begin{mainthm}\label{mainthmB}
For all $n\geq 3$, given any $N\in \N$, there exists $\varepsilon>0$ such that if $\gamma \in \left(\frac{1}{2}-\varepsilon,\frac{1}{2}+\varepsilon\right)\cup (1-\varepsilon,1)$, then the singular fractional Yamabe problem on $\S^{n}\setminus\S^k$ has at least $N$ pairwise nonhomothetic periodic solutions for all $0\leq k<\frac{n}{2}-\gamma$.
\end{mainthm}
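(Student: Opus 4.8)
The plan is to produce, for any prescribed $N$ and any admissible $\gamma$, a single compact quotient $X=\S^{n-k-1}\times(\H^{k+1}/\Gamma)$ of $\S^n\setminus\S^k\cong\S^{n-k-1}\times\H^{k+1}$ whose conformal class $[\gp]$ contains at least $N$ pairwise nonhomothetic metrics of constant $\gamma$-fractional curvature; pulled back to $\S^n\setminus\S^k$, these are periodic solutions. Since $k$ may exceed $1$, Mostow rigidity forbids deforming $\H^{k+1}/\Gamma$, and the sphere factor is rigid as well, so the deformation of metrics used for Theorem~\ref{mainthmA} is unavailable. Instead, the idea is to make the Morse index of the trivial solution (a constant multiple of $\gp$) as large as we please by enlarging the volume of the hyperbolic factor, and then to convert this into multiplicity of solutions.

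First I would analyze the spectrum of $P_\gamma^{\gp}$ on the symmetric space $\S^{n-k-1}\times\H^{k+1}$. Since this operator commutes with $SO(n-k)\times\operatorname{Isom}(\H^{k+1})$, on the tensor product of the degree-$\ell$ spherical harmonics of $\S^{n-k-1}$ with a $\mu$-eigenspace of $-\Delta_{\H^{k+1}}$ it acts by a scalar $F_\gamma(\ell,\mu)$, a ratio of Gamma functions generalizing~\eqref{eq:Qgamma-prodmetric} (so that $F_\gamma(0,0)=Q_\gamma(n,k)$), as in \Cref{prop:eigenvalues-P}; hence on a compact quotient $X$ the spectrum of $P_\gamma^{\gp}$ is $\{F_\gamma(\ell,\mu_i):\ell\geq 0,\ \mu_i\in\operatorname{spec}(-\Delta_{\H^{k+1}/\Gamma})\}$. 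I would then establish that (i)~$P_\gamma^{\gp}>0$ on $X$, so that the variational and bifurcation frameworks apply, and (ii)~the map $\mu\mapsto F_\gamma(0,\mu)$ is strictly increasing, with $F_\gamma(0,0)=Q_\gamma(n,k)<\tfrac{n+2\gamma}{n-2\gamma}Q_\gamma(n,k)$, the right-hand side being the threshold produced by linearizing the critical nonlinearity $u^{(n+2\gamma)/(n-2\gamma)}$. It is exactly here that $\gamma$ being near the \emph{local} values $\tfrac12$ and $1$ is used: at $\gamma=1$, $F_1(\ell,\mu)=\ell(\ell+n-k-2)+\mu+\mathrm{const}$ is the spectrum of the conformal Laplacian, for which (i)--(ii) are elementary; at $\gamma=\tfrac12$ one has the analogous statements for the conformal generalized mean-curvature (Dirichlet-to-Neumann) operator; and a perturbation argument in $\gamma$ then extends (i)--(ii) to a neighborhood of each.

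Next I would invoke Weyl's law on $\H^{k+1}/\Gamma$: by (ii), every mode $(0,\mu_i)$ with $\mu_i<\mu^\ast$, where $F_\gamma(0,\mu^\ast)=\tfrac{n+2\gamma}{n-2\gamma}Q_\gamma(n,k)$, contributes to the Morse index of the trivial solution, and the number of such $\mu_i$ grows proportionally to $\Vol(\H^{k+1}/\Gamma)$. So choosing $\Gamma$ of sufficiently large covolume --- for instance a deep cover in a residual tower over a fixed cocompact lattice, taking congruence covers when $k\geq 1$ to keep the nonzero spectrum bounded away from $0$ --- makes this Morse index at least $N$. Finally, to pass from ``Morse index $\geq N$'' to ``$\geq N$ nontrivial solutions'', I would argue as in the proof of Theorem~\ref{mainthmA} and the bifurcation literature cited there: either by detecting the resulting jumps of the Morse index along a path (here parametrized by $\gamma$, or by a tower of covers in which the index strictly increases at least $N$ times, each increase forcing a solution on the cover that is not pulled back from below), or, when $2\gamma<k+1$, by restricting to $SO(n-k)$-invariant functions, which renders the nonlinearity subcritical for the resulting problem on the $(k+1)$-manifold $\H^{k+1}/\Gamma$, restores the Palais--Smale condition, and permits a Lusternik--Schnirelmann/Morse-theoretic count on the associated Nehari manifold. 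In all cases the $N$ solutions have distinct critical levels or symmetry types, hence are pairwise nonhomothetic, since homotheties of $\gp$ correspond to constant rescalings of the conformal factor. Because for fixed $N$ only finitely many of the inequalities above are needed and each is open in $\gamma$, they persist for $\gamma\in\bigl(\tfrac12-\varepsilon,\tfrac12+\varepsilon\bigr)\cup(1-\varepsilon,1)$ with $\varepsilon=\varepsilon(N,n)\to 0$ as $N\to\infty$.

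The hard part will be the spectral analysis: obtaining the closed form of $F_\gamma(\ell,\mu)$ for the non-local operator $P_\gamma^{\gp}$ on the product and, above all, proving the positivity of $P_\gamma^{\gp}$ and the monotonicity of $F_\gamma(0,\cdot)$ for \emph{all} of the infinitely many relevant modes. For general $\gamma$ these are delicate properties of a transcendental expression, and the restriction of Theorem~\ref{mainthmB} to $\gamma$ near $\tfrac12$ and $1$ is precisely what makes a perturbation argument off the conformal Laplacian and the conformal mean-curvature operator feasible.
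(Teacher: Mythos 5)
Your overall strategy --- grow the Morse index of the trivial solution by passing to covers of large volume, then convert index into multiplicity --- is genuinely different from the paper's, and it has two gaps that I do not see how to close. The first is the conversion step itself. For $k>1$ Mostow rigidity leaves no continuous path of product metrics, so variational bifurcation (which is what powers Theorem~\ref{mainthmA}) is simply unavailable: a jump of the Morse index between two \emph{discrete} levels of a covering tower does not force the existence of a nontrivial critical point, because there is no family $I_t$ interpolating between them; and bifurcating in $\gamma$ changes the equation, so it cannot produce $N$ solutions of a single fixed problem. Your fallback (symmetry reduction plus Lusternik--Schnirelmann on a Nehari manifold) is an entirely separate program whose compactness and multiplicity theory for the critical non-local operator $P_\gamma^{\gp}$ is not established, and the hypothesis $2\gamma<k+1$ is incompatible with part of the stated range of $k$. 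The second gap is in the Weyl-law step: the number of Laplace eigenvalues of $\H^{k+1}/\Gamma$ below the fixed threshold $\mu^\ast$ grows linearly in $\Vol$ only if $\mu^\ast$ exceeds the bottom $\tfrac{k^2}{4}$ of the spectrum of $\H^{k+1}$; the condition $F_\gamma\big(0,\tfrac{k^2}{4}\big)<\tfrac{n+2\gamma}{n-2\gamma}Q_\gamma(n,k)$ is exactly the analogue of inequality \eqref{ineqimp}, which Proposition~\ref{prop:cn} shows is a delicate constraint that \emph{fails} for some admissible $\gamma$ already when $k=1$. You assume it without verification for all $k$.

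You have also misidentified where the restriction $\gamma\approx\tfrac12$ or $\gamma\approx1$ comes from. The spectral computation (Proposition~\ref{prop:eigenvalues-P}), the positivity of the $\Theta_{m,\ell}$, and the monotonicity in Lemma~\ref{lemma:monotonicity} are all carried out in the paper for \emph{every} $\gamma\in\big(0,\tfrac n2-k\big)$ via the Fourier--Helgason transform and digamma identities; no perturbation off the local operators is needed there. The restriction on $\gamma$ enters only through Lemma~\ref{lemma:yamabe}: the strict Aubin inequality $\Lambda_\gamma(M,[\g])<\Lambda_\gamma(\S^n,[\g_{\S^n}])$ is known at $\gamma_0=\tfrac12$ and $\gamma_0=1$ by the Positive Mass Theorem and is then propagated to nearby $\gamma$ by continuity of $\gamma\mapsto\lambda_1(P_\gamma^{\g_u})$. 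The paper's actual mechanism then avoids the Morse index entirely: since $Q_\gamma(n,k)>0$, the normalized energy $Q_\gamma(n,k)\Vol(M_j,\Pi_j^*\g)^{2\gamma/n}$ of the trivial solution grows along a tower of finite-sheeted covers (Lemma~\ref{lemma:tower}) and eventually exceeds $\Lambda_\gamma(\S^n,[\g_{\S^n}])\geq\Lambda_\gamma(M_j,[\Pi_j^*\g])$, so the trivial solution cannot be the minimizer whose existence Proposition~\ref{prop:aubin-ineq} guarantees; iterating $N$ times yields $N$ nonhomothetic minimizers on nested covers. If you want to salvage your approach, you would need to supply the missing global variational argument; as written, the proof is incomplete.
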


For $\gamma=1$ and $\gamma=2$, there are infinitely many nonhomothetic periodic solutions
on $\S^{n}\setminus\S^k$ for all $0\leq k<\frac{n}{2}-\gamma$, see \cite[Cor.~1.2]{bp-aif} and \cite[Cor.~C]{bps-imrn}, respectively. We conjecture that the same conclusion holds in the above setting, and also for $\gamma\in(1,1+\varepsilon)\cup \left(\frac{3}{2}-\varepsilon,\frac{3}{2}+\varepsilon\right)\cup(2-\varepsilon,2)$. The only missing ingredient to prove Theorem~\ref{mainthmB} for such values of $\gamma>1$ is an Aubin-type existence result for $Q_\gamma$, which seems to be technical but doable (see Remark \ref{remark:gamma}). The proof of Theorem~\ref{mainthmB} adapts the method from \cite{bp-aif,bps-imrn}, which overcomes the restriction imposed by Mostow rigidity combining the Aubin-type result with the existence of towers of finite-sheeted regular coverings of compact quotients of $\S^n\setminus\S^k$ whose volume is arbitrarily large. The reason why we need $\gamma$ to be near $\tfrac12$ or $1$ is that verifying the hypotheses in the Aubin-type existence result relies on the Positive Mass Theorem, which is currently only known to hold for such $\gamma$, see \Cref{lemma:yamabe}. As an alternative to the Positive Mass Theorem, one may assume the existence of a positive Green's function. Under this assumption, similar nonuniqueness results have been recently proved for the sixth-order constant $Q$-curvature problem ($\gamma=3$) in \cite{andrade2023nonuniqueness}, and for general conformally variational invariants of higher order in \cite{andrade2023general}.

Since stereographic projection gives a conformal equivalence $\S^n\setminus \S^0\cong \R^n\setminus\{0\}$, the case $k=0$ in Theorem~\ref{mainthmB} can be recovered using the construction of Delaunay-type solutions to the fractional Yamabe problem in \cite{delaunay}. Moreover, solutions with an isolated singularity have been recently shown to have infinite Morse index in \cite{CDR}.

This paper is organized as follows. In \Cref{section:problem}, we give the necessary background on the conformal fractional Laplacian and the $\gamma$-fractional curvature.
The spectrum of the conformal fractional Laplacian on $\S^{n-k-1}\times\Sigma^{k+1}$
is described in \Cref{sec:trivialsolution}, and Theorem~\ref{mainthmA} is proved in \Cref{section:thmA}. Finally, we prove Theorem~\ref{mainthmB} in \Cref{section:thmB}.

\subsection*{Acknowledgements}
R.\ G.\ Bettiol acknowledges financial support from the National Science Foundation, through NSF grant DMS-1904342 and NSF CAREER grant DMS-2142575. M.~Gonz\'alez acknowledges financial support from the Spanish  Government: PID2020-113596GB-I00, RED2018-102650-T funded by MCIN/AEI/ 10.13039/ 501100011033, and the ``Severo Ochoa Programme for Centers of Excellence in R\&D'' (CEX2019-000904-S).
We would like to thank the anonymous referee for the careful reading of our paper and thoughtful suggestions for improvements.

\section{Fractional Yamabe Problem}\label{section:problem}

In this section, we recall the definitions of conformal fractional Laplacian and fractional curvature, closely following \cite[Sec.~2]{Gonzalez:survey}.

\subsection{Fractional Laplacian and fractional curvature}
Let $n\geq4$ and consider a conformally compact $(n+1)$-dimensional Poincar\'e--Einstein manifold $(M^{+},\g^{+})$ with conformal infinity $(M,[\g])$. Near the boundary, the metric $\g^{+}$ takes the form
\begin{equation}\label{eq:g+canform}
\g^{+}=\frac{1}{\rho^{2}}\big(\dd\rho^{2}+\h_{\rho}\big),
\end{equation}
where $\rho$ is a defining function for $M$ in $M^+$, and $\h_{\rho}$ is a $1$-parameter family of metrics such that $\h_\rho|_{\rho=0}=\g$.
Fix $\gamma \in \left(0,\frac{n}{2}\right)$, $\gamma\notin\N$, and consider the eigenvalue problem on $(M^{+},\g^+)$ given by
\begin{equation}\label{eq:PDE}
-\Delta_{\g^{+}}W-\left(\tfrac{n^{2}}{4}-\gamma^2\right)W=0.
\end{equation}
Supposing that $\tfrac{n^{2}}{4}-\gamma^2$ is not an $L^2$-eigenvalue (we will also assume, for later purposes, that $\lambda_1(-\Delta_{\g^+})>\frac{n^2}{4}-\gamma^2$), this PDE has a unique solution
\begin{equation*}
W=W_{1}\,\rho^{\frac{n}{2}-\gamma}+W_{2}\,\rho^{\frac{n}{2}+\gamma},
\end{equation*}
under the Dirichlet condition $W_{1}|_{\rho=0}=w$, for $W_1,W_2$ smooth solutions up to $\overline {M^+}$.
\begin{definition}\label{def:fracLap-fraccurv}
The \emph{conformal $\gamma$-fractional Laplacian} $P_{\gamma}^{\g}$ is the operator
\begin{equation*}
\phantom{, \quad\text{ where } d_{\gamma}=4^{\gamma}\frac{\Gamma(\gamma)}{\Gamma(-\gamma)}.}
P_{\gamma}^{\g}w=d_{\gamma}\,W_{2}|_{\rho=0}, \quad\text{ where } d_{\gamma}=4^{\gamma}\frac{\Gamma(\gamma)}{\Gamma(-\gamma)},
\end{equation*}
and the \emph{$\gamma$-fractional curvature} of $(M,\g)$ is its zeroth order term $Q_{\gamma}^{\g}:=P_{\gamma}^{\g}(1)$.
\end{definition}

The scattering operators $P_{\gamma}^{\g}$ form a $1$-parameter meromorphic family of  self-adjoint pseudo-differential operators on $M$, with poles at integer values of $\gamma$, whose principal symbol is the same as that of $(-\Delta_\g)^\gamma$. These operators are conformally covariant, in the sense that
a conformal change of metric $\g_u=u^{\frac{4}{n-2\gamma}}\g$ leads to
\begin{equation}\label{eq:conf-cov}
  P_\gamma^{\g_u}(\cdot)=u^{-\frac{n+2\gamma}{n-2\gamma}}P_\gamma^\g(u\,\cdot).
\end{equation}
Furthermore, these operators are \emph{non-local}, in the sense that they depend not only on $(M,\g)$, but also on its Poincar\'e--Einstein filling $(M^+,\g^+)$.

The normalizing constant $d_\gamma$ ensures that $P_\gamma^\g$ can be extended to integer values of $\gamma$ with an appropriate residue formula~\cite{graham-zworski}. This extension gives rise to \emph{local} differential operators on $(M,\g)$ that also satisfy \eqref{eq:conf-cov} and, up to multiplication by a (positive) dimensional constant, coincide with the conformally covariant powers of the Laplacian known as GJMS operators. In particular, $P_{1}^{\g}=-\Delta_\g+\frac{n-2}{4(n-1)}R_\g$ is the conformal Laplacian, and $P_{2}^{\g}=(-\Delta_\g)^2+\dots$ is the Paneitz operator.

By \eqref{eq:conf-cov}, the fractional curvature of a conformal metric $\g_u=u^{\frac{4}{n-2\gamma}}\g$ is:
\begin{equation*}
Q_{\gamma}^{\g_{u}}=u^{-\frac{n+2\gamma}{n-2\gamma}}\, P_{\gamma}^{\g}u.
\end{equation*}
Thus, solutions $\g_u$ to the fractional Yamabe problem on $(M,\g)$ correspond to positive solutions $u\colon M\to\R$ to the nonlinear eigenvalue problem
\begin{equation}\label{eq:P-eigenvalue-prob}
\phantom{, \quad \lambda \in\R}
P_{\gamma}^{\g}u=\lambda \, |u|^{\frac{4\gamma}{n-2\gamma}}u, \quad \lambda \in\R.
\end{equation}
Similarly, solutions $\g_u$ to the singular fractional Yamabe problem on $(M,\g)$ with singular locus $\Lambda\subset M$ correspond to positive solutions to \eqref{eq:P-eigenvalue-prob} such that $u\nearrow+\infty$ sufficiently fast at $\Lambda$ so that $(M\setminus\Lambda,\g_u)$ is complete. The main idea in \cite{MQ}, as in the classical Yamabe problem for $\gamma=1$,  is to characterize solutions to \eqref{eq:P-eigenvalue-prob} variationally as critical points of the energy functional
\begin{equation}\label{eq:energy}
E_\gamma^{\g}\colon W^{\gamma,2}(M)\to\R, \qquad E_\gamma^{\g}(u):= \int_{M} u\,P_{\gamma}^{\g}u\;\mathrm{vol}_\g,
\end{equation}
where $\mathrm{vol}_\g$ is the volume element of $\g$, subject to the constraint
\begin{equation}\label{eq:constraint}
\int_{M} |u|^{\frac{2n}{n-2\gamma}}\;\mathrm{vol}_\g=\Vol(M,\g),
\end{equation}
and look for minimizers of $E_\gamma^{\g}$, which turn out to be positive (see Proposition~\ref{prop:aubin-ineq}).

If $u\equiv1$ is one such critical point, i.e., $Q_\gamma^\g$ is constant, then its Morse index and nullity, also referred to as Morse index and nullity of $\g$, are the number (counted with multiplicity) of negative eigenvalues and nullity of the Jacobi operator
\begin{equation}\label{eq:jacobi}
J_\gamma^{\g}=P_{\gamma}^{\g}-\tfrac{n+2\gamma}{n-2\gamma}\,Q_{\gamma}^{\g},
\end{equation}
as an unbounded self-adjoint operator on $L^2_0(M)=\big\{v\in L^2(M) : \int_M v\;\mathrm{vol}_\g=0\big\}$. Finally, note that $J_\gamma^\g\colon W^{\gamma,2}(M)\to W^{-\gamma,2}(M)$ is a Fredholm operator of index zero. The fact that $J_\gamma^\g$ is Fredholm follows by a standard argument from the compact embedding of $W^{\gamma,2}(M)$ in $L^{2}(M)$ and the fact that $P_{\gamma}^\g$ is an elliptic operator with leading term $(-\Delta_\g)^{\gamma}$.

\subsection{Existence of minimizing solutions}
In order to seek \emph{minimizing} solutions to the $\gamma$-fractional Yamabe problem, i.e., positive minimizers of \eqref{eq:energy} satisfying \eqref{eq:constraint}, one may extend the notion of Yamabe constant beyond the classical case $\gamma=1$ as follows:

\begin{definition}\label{def:gamma-Yamabe-const}
The \emph{$\gamma$-Yamabe constant} of the conformal class $[\g]$ is given by
\begin{equation}\label{yam}
\Lambda_{\gamma}(M,[\g]):=
\inf_{\substack{u\in W^{\gamma,2}(M)}}\frac{ E_\gamma^\g(u) }{\Big(\int_{M} u^{\frac{2n}{n-2\gamma}}\;\mathrm{vol}_{\g}\Big)^{\frac{n-2\gamma}{n}}} =\inf_{\g_u\in [\g]}\frac{\int_{M} Q_{\gamma}^{\g_u}\;\mathrm{vol}_{\g_u}}{\Vol(M,\g_u)^{\frac{n-2\gamma}{n}}}.
\end{equation}
\end{definition}

The following Aubin-type existence result for the fractional Yamabe problem with $\gamma\in(0,1)$ was proven in \cite{MQ,CC}.

\begin{proposition}\label{prop:aubin-ineq}
For a closed $n$-dimensional manifold $(M,\g)$ as above, we have
\begin{equation}\label{eq:Aubin-ineq}
-\infty<\Lambda_{\gamma}(M,[\g])\leq \Lambda_{\gamma}(\S^{n},[\g_{\S^{n}}]),
\end{equation}
where $\gamma\in(0,1)$ and $\g_{\S^n}$ is the unit round metric on $\S^{n}$.
Moreover, if
\begin{equation*}
\Lambda_{\gamma}(M,[\g])< \Lambda_{\gamma}(\S^{n},[\g_{\S^n}]),
\end{equation*}
then the infimum \eqref{yam} is achieved by a positive function $u\in W^{\gamma,2}(M)$; in other words, $\g_u$ is a minimizing solution to the $\gamma$-fractional Yamabe problem on $(M,\g)$. In addition, $u\in C^\infty$.
\end{proposition}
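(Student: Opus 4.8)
The plan is to adapt the classical subcritical approximation argument of Aubin and Trudinger to the non-local setting, as in \cite{MQ,CC}. First I would establish the lower bound $\Lambda_\gamma(M,[\g])>-\infty$: this follows from the positivity assumption $\lambda_1(-\Delta_{\g^+})>\frac{n^2}{4}-\gamma^2$ made in \Cref{section:problem}, which forces the quadratic form $E_\gamma^\g$ to be coercive on $W^{\gamma,2}(M)$ modulo lower-order terms, together with the Sobolev embedding $W^{\gamma,2}(M)\hookrightarrow L^{\frac{2n}{n-2\gamma}}(M)$ (which holds since $\frac{2n}{n-2\gamma}$ is the critical exponent for a pseudo-differential operator of order $2\gamma$). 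The upper bound $\Lambda_\gamma(M,[\g])\le\Lambda_\gamma(\S^n,[\g_{\S^n}])$ is the more delicate half; I would prove it by transplanting the standard sphere bubbles (the extremals of the fractional Sobolev/Bliss--Talenti type inequality realizing $\Lambda_\gamma(\S^n,[\g_{\S^n}])$), suitably truncated and rescaled, into a small geodesic ball of $(M,\g)$. Because $P_\gamma^\g$ has the same principal symbol as $(-\Delta_\g)^\gamma$, and using the conformal normal coordinates / conformal flatness to leading order near a point, the energy $E_\gamma^\g$ of these test functions converges, as the concentration parameter tends to $0$, to the corresponding sphere quantity, giving the inequality.

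For the achievement of the infimum when the strict inequality holds, I would run the subcritical approximation: for $q<\frac{2n}{n-2\gamma}$ close to the critical exponent, minimize $E_\gamma^\g$ over $\{u\in W^{\gamma,2}(M): \|u\|_{L^q}=1\}$. Since the embedding $W^{\gamma,2}(M)\hookrightarrow L^q(M)$ is compact for subcritical $q$, a minimizer $u_q\ge 0$ exists by the direct method and solves the Euler--Lagrange equation $P_\gamma^\g u_q=\lambda_q\,u_q^{q-1}$ with $\lambda_q=E_\gamma^\g(u_q)\to\Lambda_\gamma(M,[\g])$ as $q\nearrow\frac{2n}{n-2\gamma}$. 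One then studies the limit: either $(u_q)$ is precompact in $W^{\gamma,2}$, in which case the limit $u$ is the desired minimizer, or it concentrates (bubbles) at a point. The key point is that a concentration/bubbling analysis — localizing the equation, rescaling around the concentration point, and using the classification of entire positive solutions to the critical equation on $\R^n$ (the Caffarelli--Silvestre extension reducing $(-\Delta)^\gamma$ to a degenerate-elliptic boundary problem, plus the moving-plane classification of bubbles) — shows that if bubbling occurs then necessarily $\Lambda_\gamma(M,[\g])\ge\Lambda_\gamma(\S^n,[\g_{\S^n}])$, contradicting the strict inequality. Hence no concentration, $(u_q)$ is precompact, and passing to the limit yields a nonnegative critical point $u$ of $E_\gamma^\g$ at level $\Lambda_\gamma(M,[\g])$ with $\|u\|_{L^{2n/(n-2\gamma)}}=1$.

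Finally, I would upgrade the minimizer: positivity $u>0$ follows from a strong maximum principle for $P_\gamma^\g$ (again via the Caffarelli--Silvestre-type extension, using that $\frac{n^2}{4}-\gamma^2$ lies below the bottom of the spectrum of $-\Delta_{\g^+}$, so the extended operator satisfies a maximum principle and Harnack inequality), after first knowing $u\ge 0$ and $u\not\equiv0$ from the $L^q$-normalization. Regularity $u\in C^\infty$ comes from elliptic bootstrap: $u\in L^{2n/(n-2\gamma)}$ feeds into the equation $P_\gamma^\g u=\Lambda_\gamma u^{(n+2\gamma)/(n-2\gamma)}$, and standard $L^p$ and Schauder estimates for the pseudo-differential (or extended degenerate-elliptic) operator, iterated, give smoothness. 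I expect the \textbf{main obstacle} to be the bubbling analysis establishing that concentration forces the reverse of the strict inequality: this requires careful non-local localization estimates (the tail of $P_\gamma^\g$ does not vanish outside the concentration ball) and the sharp constant in the fractional Sobolev inequality on $\R^n$, i.e., identifying $\Lambda_\gamma(\S^n,[\g_{\S^n}])$ with the best constant and its extremals — precisely the place where the non-local nature of the problem makes the argument genuinely harder than in the classical case $\gamma=1$.
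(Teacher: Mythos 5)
The paper does not prove this proposition: it is stated as a known result, quoted from \cite{MQ,CC}, with no proof given. Your outline is broadly consistent with the argument carried out in those references: the lower bound from coercivity of $E_\gamma^{\g}$ (using the standing spectral assumption on $-\Delta_{\g^+}$) together with the fractional Sobolev embedding; the upper bound by testing with truncated, rescaled bubbles concentrating at a point; existence under the strict inequality via subcritical approximation plus a concentration analysis showing that bubbling would force $\Lambda_{\gamma}(M,[\g])\geq \Lambda_{\gamma}(\S^{n},[\g_{\S^{n}}])$; and positivity and smoothness via a maximum principle and bootstrap. One substantive point to flag: in \cite{MQ} the analysis is not performed on the non-local operator $P_\gamma^{\g}$ on $M$ directly, but on the extension problem on the asymptotically hyperbolic filling $(M^{+},\g^{+})$ (the geometric, Chang--Gonz\'alez version of the Caffarelli--Silvestre extension), where the equation is a local, degenerate-elliptic boundary problem. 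This is precisely how the ``non-local localization'' difficulty you single out as the main obstacle is circumvented: the sharp constant is identified through a weighted \emph{trace} Sobolev inequality on the model half-space rather than the fractional Sobolev inequality on $\R^n$, and the concentration analysis, maximum principle, and Harnack inequality all take place upstairs in $M^{+}$. Your sketch is a plan rather than a proof, but it is the right plan; to make it rigorous one should commit to the extension formulation from the outset, as the cited references do.
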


\begin{remark}\label{remark:gamma}
We expect that a similar result holds also for exponents $\gamma\in(1,2)$, following the ideas of \cite{HY,Gursky-Malchiodi} for the $Q$-curvature, since the main ingredient is a maximum principle to show that a minimizer $u$ of \eqref{yam} is strictly positive. For $\gamma\in(1,2)$, this maximum principle was proved in \cite[Thm~1.3]{CC}.
\end{remark}

\begin{remark}
Similarly to the classical case $\gamma=1$, a rigidity statement if equality holds in \eqref{eq:Aubin-ineq} is known if $\gamma=\tfrac12$, or $\gamma=2$ and $R_\g\geq0$, $Q_{\gamma}^{\g}\geq 0$, and $Q_{\gamma}^{\g}\not\equiv 0$. Namely, in these cases, $\Lambda_{\gamma}(M,[\g]) = \Lambda_{\gamma}(\S^{n},[\g_{\S^n}])$ if and only if $(M,\g)$ is conformally equivalent to $(\S^n,\g_{\S^n})$, see \cite{Esc,HY}.
\end{remark}

\section{On the trivial solution}\label{sec:trivialsolution}

In this section, we examine spectral properties of the conformal fractional Laplacian on compact quotients of $\S^n\setminus\S^k\cong \S^{n-k-1}\times \H^{k+1}$, using results from \cite{ACDFGW}.

First, we recall the following elementary fact:

\begin{lemma}\label{lemma:conf-equiv}
The complement $\S^{n}\setminus \S^{k}$ of a totally geodesic subsphere $\S^k$ in the unit round sphere $\S^n$, $0<k<n$, is conformally equivalent to $\S^{n-k-1}\times \H^{k+1}$, endowed with the product metric $\gp$ of metrics of constant curvature $1$ and $-1$.
\end{lemma}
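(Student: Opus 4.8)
The plan is to exhibit an explicit conformal diffeomorphism and compute. First I would set up coordinates: write $\S^n \subset \R^{n+1} = \R^{k+1} \times \R^{n-k}$ and realize the totally geodesic $\S^k$ as $\S^n \cap (\R^{k+1}\times\{0\})$, so that $\S^n\setminus\S^k = \{(x,y) \in \R^{k+1}\times\R^{n-k} : |x|^2+|y|^2 = 1,\ y \neq 0\}$. The subset where $y\neq 0$ deformation retracts and, more importantly, admits the diffeomorphism $(x,y)\mapsto \big(\tfrac{y}{|y|},\ \tfrac{x}{|y|}\big) \in \S^{n-k-1}\times\R^{k+1}$, whose inverse is $(\omega, z)\mapsto \tfrac{1}{\sqrt{1+|z|^2}}(z,\omega)$. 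The idea is that the $\R^{k+1}$ factor carries the stereographic-type coordinates in which the round metric, restricted, becomes (conformally) the hyperbolic metric on the ball or upper half space.

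Next I would compute the pulled-back metric. A cleaner route is to use polar-type coordinates on $\S^n\setminus\S^k$: write $\g_{\S^n} = \dd t^2 + \sin^2 t\,\g_{\S^k} + \cos^2 t\,\g_{\S^{n-k-1}}$ for $t\in(0,\tfrac\pi2)$, where $t$ is the distance to $\S^{n-k-1}$ (equivalently $\sin t = |x|$). Then I factor out $\cos^2 t$:
\begin{equation*}
\g_{\S^n} = \cos^2 t\left( \g_{\S^{n-k-1}} + \frac{\dd t^2 + \sin^2 t\,\g_{\S^k}}{\cos^2 t}\right).
\end{equation*}
Setting $r = \log\tan\big(\tfrac t2 + \tfrac\pi4\big)$ (or a similar substitution, e.g. $\sinh r = \tan t$), a direct computation shows $\tfrac{\dd t^2 + \sin^2 t\,\g_{\S^k}}{\cos^2 t} = \dd r^2 + \sinh^2 r\,\g_{\S^k} = \g_{\H^{k+1}}$, the hyperbolic metric in geodesic polar coordinates around a point. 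Hence $\g_{\S^n} = \cos^2 t\,\big(\g_{\S^{n-k-1}} + \g_{\H^{k+1}}\big) = \cos^2 t \cdot \gp$, which is pointwise conformal to $\gp$ via the positive function $\cos^2 t = 1 - |x|^2$ on $\S^n\setminus\S^k$. Composing with the diffeomorphism identifies $(\S^n\setminus\S^k,[\g_{\S^n}])$ with $(\S^{n-k-1}\times\H^{k+1},[\gp])$.

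The substitution verifying $\tfrac{\dd t^2 + \sin^2 t\,\g_{\S^k}}{\cos^2 t} = \g_{\H^{k+1}}$ is the one genuinely computational step, but it is the standard fact that the round metric on a hemisphere, conformally rescaled, gives hyperbolic space; I would just record the change of variable and the resulting ODE identity $\big(\tfrac{\dd r}{\dd t}\big)^2 = \tfrac{1}{\cos^2 t}$ together with $\sinh^2 r = \tan^2 t = \tfrac{\sin^2 t}{\cos^2 t}$, which pins down $r(t)$ and closes the argument. The only mild subtlety — hardly an obstacle — is checking that the conformal factor $\cos^2 t$ extends smoothly and positively over all of $\S^n\setminus\S^k$ (it vanishes exactly on $\S^k$, which is excised) and that the map is a genuine diffeomorphism onto $\S^{n-k-1}\times\H^{k+1}$ rather than just a local one; both follow immediately from the explicit formulas since $t$ ranges over $(0,\tfrac\pi2)$ and $r$ over $(0,\infty)$, with $\S^{n-k-1}$ and $\S^k$ playing symmetric roles as the two "ends."
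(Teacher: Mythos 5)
Your proof is correct, but it takes a genuinely different route from the paper's. The paper stereographically projects from a point of $\S^k$ to get a conformal equivalence $\S^n\setminus\S^k\cong\R^n\setminus\R^k$, writes the flat metric in cylindrical coordinates as $\dd r^2+r^2\dd\theta^2+\dd y^2$, and divides by $r^2$ to read off $\g_{\S^{n-k-1}}+\g_{\H^{k+1}}$ with $\H^{k+1}$ in the upper-half-space model. You instead work intrinsically on the sphere with the join decomposition $\g_{\S^n}=\dd t^2+\sin^2 t\,\g_{\S^k}+\cos^2 t\,\g_{\S^{n-k-1}}$, factor out $\cos^2 t$, and identify the remaining block with $\H^{k+1}$ in geodesic polar coordinates via $\sinh r=\tan t$. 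Both computations are standard and your substitution checks out ($\cosh r=\sec t$ gives $\dd r=\dd t/\cos t$ consistently). The paper's argument is shorter, piggybacking on the known conformality of stereographic projection; yours has the advantage of producing the explicit global conformal factor $\cos^2 t=1-|x|^2$ on $\S^n\setminus\S^k$ without composing through $\R^n$, and it directly parallels the doubly warped product presentation \eqref{eq:g+hyp} of the filling $\H^{n+1}\setminus\H^{k+1}$ used later in Section 3, which is the same computation one dimension up. One small imprecision: the slice $t=0$, i.e.\ the copy of $\S^{n-k-1}$, is not an ``end'' of $\S^n\setminus\S^k$ symmetric to the excised $\S^k$ at $t=\tfrac\pi2$; it is merely the locus where the polar coordinates degenerate, corresponding to $\S^{n-k-1}\times\{p\}$ with $p$ the center of the polar chart on $\H^{k+1}$. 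Your explicit diffeomorphism $(x,y)\mapsto\big(\tfrac{y}{|y|},\tfrac{x}{|y|}\big)$ already handles this locus smoothly, so the conclusion is unaffected.
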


\begin{proof}
Stereographic projection using a point in $\S^k$ gives a conformal equivalence from $\S^{n}\setminus \S^{k}$ to $\R^{n}\setminus \R^{k}=(\R^{n-k}\setminus \{0\})\times \R^k$ endowed with the flat metric, which can be written in cylindrical coordinates as $\dd r^2+r^2\dd \theta^2+\dd y^2$, where $(r,\theta)$ are polar coordinates in $\R^{n-k}\setminus \{0\}$ and $y$ is the coordinate in~$\R^k$. Dividing by $r^2$, one obtains $\dd\theta^2+\frac{1}{r^2}(\dd r^2+\dd y^2)$, which is precisely $\gp=\g_{\S^{n-k-1}}+\g_{\H^{k+1}}$.
\end{proof}

Denote by $\phi\colon \S^n\setminus\S^k\to \S^{n-k-1}\times \H^{k+1}$ the conformal equivalence in \Cref{lemma:conf-equiv}.
For all $1\leq k < \frac{n}{2}-\gamma$, we call the pullback metric $\gtriv=\phi^*\gp$ the \emph{trivial} solution to the singular fractional Yamabe problem on $\S^n\setminus \S^k$, since it is a complete metric that, as we will now show, has constant (positive) fractional curvature, see \eqref{eq:trivial-fractional-curvature}.

The Poincar\'e--Einstein manifold $(M^+,\g^+)$ with conformal infinity $(M,\g)$ given by $(\S^n\setminus \S^k,\gtriv)\cong (\S^{n-k-1}\times \H^{k+1},\gp)$ is clearly $\H^{n+1}\setminus \H^{k+1}$, see \Cref{fig:ball}.
\begin{figure}[!ht]
\centering
\vspace{-.5cm}
\begin{tikzpicture}[scale=0.5,every node/.style={minimum size=1cm}]
\def\R{4} 
\def\angEl{25} 
\def\angAz{-100} 

\pgfmathsetmacro\H{\R*cos(\angEl)} 
    \LongitudePlane[nzplane]{\angEl}{-86}
\LatitudePlane[equator]{\angEl}{0}
\fill[ball color=white!10] (0,0) circle (\R);
\coordinate (O) at (0,0);
\coordinate[mark coordinate, red] (N) at (0,\H);
\coordinate[mark coordinate, red] (S) at (0,-\H);

\DrawLongitudeCircle[\R]{5}
\DrawLongitudeCircle[\R]{35}
\DrawLongitudeCircle[\R]{65}
\DrawLongitudeCircle[\R]{95}
\DrawLongitudeCircle[\R]{125}
\DrawLongitudeCircle[\R]{155}

\node[above] at (N) {$\color{red}\S^k$};
    \draw[-,dashed, thick, red] (N) -- (S);

\DrawLatitudeCircleblue[\R]{0} 
\node at (1.2,0) {$\color{red} \mathds H^{k+1}$};
\node[right] at (\R+0.5,0.15) {$\color{blue} \mathds S^{n-k-1}$};
\node[left] at (-\R-0.5,0.15) {$\phantom{\mathds S^{n-k-1}}$};
\coordinate[mark coordinate, red] (N) at (0,\H);
\coordinate[mark coordinate, red] (S) at (0,-\H);
\node at (-1.5,.8) {$\mathds H^{n+1}$};
\node at (-4.3,2.2) {$\mathds S^{n}$};
\end{tikzpicture}
\caption{Schematic depiction of $\S^n$ as the join of $\S^k$ and $\S^{n-k-1}$. Note that
$\S^{n}\setminus\S^k$ is conformal to the (trivial) normal bundle of $\S^{n-k-1}\subset\S^n$ and its Poincar\'e--Einstein filling is $\H^{n+1}\setminus \H^{k+1}$.}\label{fig:ball}
\vspace{-.2cm}
\end{figure}
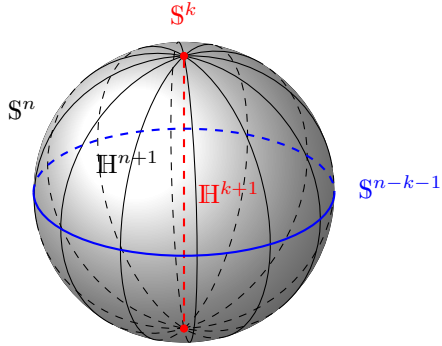

More precisely, writing $\g^{+}$ in the form \eqref{eq:g+canform}, we have
\begin{equation*}
\textstyle
\g^{+}=\frac{1}{\rho^{2}}\left(\dd\rho^{2}+\left(1-\frac{\rho^{2}}{4}\right)^{2}\g_{\S^{n-k-1}}+\left(1+\frac{\rho^{2}}{4}\right)^{2}\g_{\H^{k+1}}\right),
\end{equation*}
and, changing variables to $\sigma=\ln \frac{2}{\rho}$, with $\rho \in (0,2)$, the above becomes
\begin{equation}\label{eq:g+hyp}
\g^{+}=\dd\sigma^{2}+\sinh^{2}(\sigma)\,\g_{\S^{n-k-1}}+\cosh^{2}(\sigma)\,\g_{\H^{k+1}},
\end{equation}
which is the usual doubly warped product presentation of the hyperbolic metric on $\H^{n+1}\setminus \H^{k+1}$, where $\sigma>0$ is the hyperbolic distance to $\H^{k+1}$.

Using \eqref{eq:g+hyp}, we can write equation \eqref{eq:PDE} as follows, cf.~\cite[Eqn.~(3.26)]{ACDFGW}:
\begin{equation}\label{eq:PDE-coord}
\tfrac{\partial^2}{\partial {\sigma}^{2}} W+h(\sigma) \,\tfrac{\partial}{\partial {\sigma}} W+\tfrac{1}{\sinh^{2}\sigma}\, \Delta_{\S^{n-k-1}}W+\tfrac{1}{\cosh^{2}\sigma}\,\Delta_{\H^{k+1}}W+\left(\tfrac{n^{2}}{4}-\gamma^{2}\right)W=0,
\end{equation}
where $h(\sigma)=\tfrac{\partial}{\partial {\sigma}} \ln \big(\!\sinh^{n-k-1}(\sigma)\cosh^{k+1}(\sigma)\big)$. In order to compute the spectrum of the conformal fractional Laplacian on compact quotients of $\S^{n-k-1}\times\H^{k+1}$, we proceed as in \cite[Thm.~3.5]{ACDFGW}, using separation of variables on \eqref{eq:PDE-coord}.

On the one hand, we consider the spherical harmonic decomposition for $\S^{n-k-1}$. Namely, let $E_m$ be the eigenfunctions for the Laplacian on $\S^{n-k-1}$, with eigenvalue $\mu_m=m(m+n-k-2)$, repeated according to multiplicity.
Then, functions $w$ on $\S^{n-k-1}\times \H^{k+1}$ may be decomposed as $w=\sum_{m} w_m E_m$, where $w_m=w_m(\zeta)$ for $\zeta\in\H^{k+1}$.
On the other hand, we use the Fourier--Helgason transform $\widehat{\cdot}$ on $\H^{k+1}$.
The only property we shall need is that
\begin{equation}\label{Helgason}
\widehat{\Delta_{\H^{k+1}} w_m}=-\big(\xi^2+\tfrac{k^2}{4}\big)\,\widehat{w_m},
\end{equation}
we refer to \cite[Appendix]{ACDFGW} for details, and recall Theorem~3.5 in that paper:

\begin{proposition}\label{prop:symbol}
 Fix $0<\gamma<\tfrac{n}{2}$,  $1\leq k<\frac{n}{2}-\gamma$, and let $P^{(m)}_{\gamma}$ be the projection of the operator $P_\gamma^{\gp}$ over each eigenspace $\langle E_m\rangle$. Then
$$\widehat{P_\gamma^{(m)} (w_m)}=\Theta_\gamma^{(m)}(\xi) \,\widehat{w_m},$$
and this Fourier symbol is given by
\begin{equation}\label{symbol}
\Theta_\gamma^{(m)}(\xi)=4^{\gamma}\frac{\Gamma\big(\frac{1+\gamma}{2}
+\frac{a_m+\xi i}{2}\big)}{\Gamma\big(\frac{1-\gamma}{2}+\frac{a_m+\xi i}{2}\big)}\frac{
\Gamma\big(\frac{1+\gamma}{2}+\frac{a_m-\xi i}{2}\big)}{\Gamma\big(\frac{1-\gamma}{2}+\frac{a_m-\xi i}{2}\big)},\quad \xi\in\R,
\end{equation}
where
\begin{equation}\label{am}
 a_m=\sqrt{\mu_m+ \big(\tfrac{n-k-2}{2}\big)^2}.
 \end{equation}
\end{proposition}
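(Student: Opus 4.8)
The plan is to separate variables in \eqref{eq:PDE-coord}, reduce the resulting radial ODE to a hypergeometric equation, and then read $\Theta_\gamma^{(m)}(\xi)$ off the classical formula connecting its solutions at the two ends $\sigma=0$ and $\sigma=+\infty$; this is \cite[Thm.~3.5]{ACDFGW}, and the following outlines that argument. First I would substitute $W=\sum_m W_m(\sigma,\zeta)\,E_m$ into \eqref{eq:PDE-coord}, use $\Delta_{\S^{n-k-1}}E_m=-\mu_m E_m$, and apply the Fourier--Helgason transform together with \eqref{Helgason}; writing $V(\sigma):=\widehat{W_m}(\sigma,\xi)$ for fixed $\xi$, the PDE becomes the ODE
\begin{equation*}
V''+h(\sigma)\,V'+\Big(\tfrac{n^2}{4}-\gamma^2-\tfrac{\mu_m}{\sinh^2\sigma}-\tfrac{\xi^2+k^2/4}{\cosh^2\sigma}\Big)V=0,\qquad h(\sigma)=(n-k-1)\coth\sigma+(k+1)\tanh\sigma.
\end{equation*}
Then I would substitute $t=\tanh^2\sigma\in(0,1)$, so that $t\to0$ at the axis $\sigma=0$ and $t\to1$ at the conformal infinity $\sigma=+\infty$; this makes every coefficient rational in $t$, and the resulting Fuchsian equation turns out to have regular singular points exactly at $t=0,1,\infty$, hence to be a hypergeometric equation in disguise.

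Next I would work out its Riemann scheme. Using the identity $\mu_m+\big(\tfrac{n-k-2}{2}\big)^2=\big(\tfrac{2m+n-k-2}{2}\big)^2$ — so that $a_m=\tfrac{2m+n-k-2}{2}$ by \eqref{am}, a positive number since $k<\tfrac n2\leq n-2$ when $n\geq4$ — the indicial roots are $\{\tfrac m2,\,-\tfrac{m+n-k-2}{2}\}$ at $t=0$, $\{\tfrac{n-2\gamma}{4},\,\tfrac{n+2\gamma}{4}\}$ at $t=1$, and $\{-\tfrac k4-\tfrac{\xi i}{2},\,-\tfrac k4+\tfrac{\xi i}{2}\}$ at $t=\infty$. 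Since $1-t=\cosh^{-2}\sigma\sim\rho^2$ as $\sigma\to+\infty$, where $\rho=2e^{-\sigma}$ is the defining function of \eqref{eq:g+hyp}, the two roots at $t=1$ match the $\rho^{\frac n2-\gamma}$ and $\rho^{\frac n2+\gamma}$ terms of $W$, respectively. On a closed quotient $\S^{n-k-1}\times\Sigma^{k+1}$ the filling \eqref{eq:g+hyp} closes up smoothly across $\sigma=0$ (into an interior totally geodesic copy of $\Sigma^{k+1}$), so the scattering solution is the one regular at the axis, i.e.\ with the $t^{m/2}$ behavior at $t=0$. Factoring out the appropriate powers at $t=0$ and $t=1$ and matching the exponents at $t=\infty$, this solution is $V=t^{m/2}(1-t)^{(n-2\gamma)/4}\,{}_2F_1(a,b;c;t)$ with
\begin{equation*}
a=\tfrac{1-\gamma}{2}+\tfrac{a_m-\xi i}{2},\qquad b=\tfrac{1-\gamma}{2}+\tfrac{a_m+\xi i}{2},\qquad c=1+a_m,
\end{equation*}
so that $c-a-b=\gamma$, $c-a=\tfrac{1+\gamma}{2}+\tfrac{a_m+\xi i}{2}$, and $c-b=\tfrac{1+\gamma}{2}+\tfrac{a_m-\xi i}{2}$.

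Then I would invoke the standard connection formula (valid since $\gamma=c-a-b\notin\Z$) expressing ${}_2F_1(a,b;c;t)$ near $t=1$ as a branch $\sim 1$ with coefficient $\tfrac{\Gamma(c)\Gamma(\gamma)}{\Gamma(c-a)\Gamma(c-b)}$ plus a branch $\sim(1-t)^{\gamma}$ with coefficient $\tfrac{\Gamma(c)\Gamma(-\gamma)}{\Gamma(a)\Gamma(b)}$. Since $t^{m/2}\to1$ at $t=1$, these are exactly the coefficients of $(1-t)^{(n-2\gamma)/4}$ and $(1-t)^{(n+2\gamma)/4}$ in $V$; and because $\gamma\notin\N$, the Taylor expansion of $(1-t)^{(n-2\gamma)/4}$ in powers of $\rho$ contributes nothing at order $\rho^{\frac n2+\gamma}$, so the two coefficients are, up to the common normalization of $V$, the datum $\widehat{w_m}$ and the value $d_\gamma^{-1}\widehat{P_\gamma^{(m)}(w_m)}$, respectively. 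Taking their ratio and using $d_\gamma=4^\gamma\,\Gamma(\gamma)/\Gamma(-\gamma)$ to cancel the $\Gamma(\pm\gamma)$ factors gives
\begin{equation*}
\widehat{P_\gamma^{(m)}(w_m)}=d_\gamma\,\frac{\Gamma(-\gamma)}{\Gamma(\gamma)}\,\frac{\Gamma(c-a)\,\Gamma(c-b)}{\Gamma(a)\,\Gamma(b)}\,\widehat{w_m}=4^\gamma\,\frac{\Gamma(c-a)\,\Gamma(c-b)}{\Gamma(a)\,\Gamma(b)}\,\widehat{w_m},
\end{equation*}
which is \eqref{symbol} once $a$, $b$, $c-a$, $c-b$ are substituted.

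I expect the main obstacle to be not any single calculation but the scattering bookkeeping that makes the above rigorous: verifying that ``regular at the axis $\sigma=0$'' is the correct admissibility condition singling out the scattering solution (equivalently, finiteness against the density $\sinh^{n-k-1}\sigma\,\cosh^{k+1}\sigma\,\dd\sigma$ near $\sigma=0$), tracking the change of variables $1-t=\cosh^{-2}\sigma$ against $\rho=2e^{-\sigma}$ so that $W_1$ and $W_2$ are correctly identified with the endpoint coefficients of $V$, and using the spectral hypotheses ($\tfrac{n^2}{4}-\gamma^2$ below $\lambda_1(-\Delta_{\g^+})$ and not an $L^2$-eigenvalue) to guarantee uniqueness. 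All of these points are handled in \cite{ACDFGW}, so one may alternatively just quote Theorem~3.5 there.
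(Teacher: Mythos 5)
Your proposal is correct and matches the paper, which proves this proposition simply by quoting \cite[Thm.~3.5]{ACDFGW}; your hypergeometric reduction (separation of variables, the substitution $t=\tanh^2\sigma$, the Riemann scheme, and the connection formula yielding the ratio $4^\gamma\,\Gamma(c-a)\Gamma(c-b)/\big(\Gamma(a)\Gamma(b)\big)$) is a faithful reconstruction of the argument in that reference. The bookkeeping points you flag (regularity at the axis as the admissibility condition, the identification of $W_1,W_2$ via $1-t\sim\rho^2$, and $\gamma\notin\N$ preventing cross-contamination of the two branches) are exactly the ones handled there, so either carrying out your sketch or citing the theorem directly is acceptable.
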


The corresponding spectrum on compact quotients can be extracted as follows:

\begin{proposition}\label{prop:eigenvalues-P}
Consider $\S^{n-k-1}\times\Sigma^{k+1}$ endowed with the product metric $\gp$, where $\Sigma^{k+1}=\H^{k+1}/\pi$ is a closed hyperbolic manifold.
  Let $0=\lambda_0<\lambda_1\leq\dots\leq\lambda_\ell\leq\dots\nearrow+\infty$ be the Laplace eigenvalues on $\Sigma^{k+1}$, repeated according to their multiplicity.
If $1\leq k<\frac{n}{2}-\gamma$, the spectrum of $P_\gamma^{\gp}$ consists of the discrete set of eigenvalues
\begin{equation}\label{eq:eigenvaluesTheta}
\phantom{\quad \ell\in\N}
\Theta_{m,\ell}=4^\gamma\,\frac{\Gamma\big(\frac{1+\gamma}{2}+\frac{a_m+b_\ell i}{2} \big)}{\Gamma\big(\frac{1-\gamma}{2}+\frac{a_m+b_\ell i}{2} \big)}\,\frac{\Gamma\big(\frac{1+\gamma}{2}+\frac{a_m-b_\ell i}{2} \big)}{\Gamma\big(\frac{1-\gamma}{2}+\frac{a_m-b_\ell i}{2} \big)}, \quad m,\ell\in\N\cup\{0\},
\end{equation}
where $a_m\geq 0$ is given in \eqref{am} and $b_\ell\in\C$ is
\begin{equation}\label{eq:ambl}
  \textstyle
 b_\ell=\sqrt{\lambda_\ell-\big(\frac{k}{2}\big)^2}.
\end{equation}
The corresponding eigenspace of $P_\gamma^{\gp}$ is spanned by products of eigenfunctions of the Laplacian on $\S^{n-k-1}$ and on $\Sigma^{k+1}$ with eigenvalues $\mu_m$ and $\lambda_\ell$, respectively.
\end{proposition}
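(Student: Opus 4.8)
The plan is to deduce Proposition~\ref{prop:eigenvalues-P} from the continuous spectral picture of Proposition~\ref{prop:symbol} by passing to the quotient $\Sigma^{k+1}=\H^{k+1}/\pi$. First I would reduce to a single factor at a time. Using the spherical harmonic decomposition $w=\sum_m w_m E_m$ on $\S^{n-k-1}$, the operator $P_\gamma^{\gp}$ is block-diagonalized, and on the $m$-th block it acts as the operator $P_\gamma^{(m)}$ on functions on the hyperbolic factor. Since $P_\gamma^{(m)}$ is built (via scattering on $\H^{n+1}\setminus\H^{k+1}$, equivalently via \eqref{eq:PDE-coord}) out of $\Delta_{\H^{k+1}}$ in a way that only sees $\Delta_{\H^{k+1}}$ through the equation, it descends to a well-defined operator on $\Sigma^{k+1}$; the point is that the Poincar\'e--Einstein filling of $\S^{n-k-1}\times\Sigma^{k+1}$ is $\H^{n+1}/\pi$ minus the image of $\H^{k+1}$, and equation \eqref{eq:PDE-coord} is $\pi$-invariant, so the construction in Definition~\ref{def:fracLap-fraccurv} goes through verbatim on the quotient and commutes with the covering projection.

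Next I would spectrally resolve $\Delta_{\Sigma^{k+1}}$: since $\Sigma^{k+1}$ is closed, $-\Delta_{\Sigma^{k+1}}$ has discrete spectrum $0=\lambda_0<\lambda_1\leq\cdots\nearrow+\infty$ with an orthonormal basis of eigenfunctions $\psi_\ell$. The key observation is that the Fourier--Helgason symbol identity \eqref{Helgason} on $\H^{k+1}$ is exactly the statement that the "Fourier--Helgason variable" $\xi$ enters only through the combination $\xi^2+\frac{k^2}{4}$, which is the eigenvalue $-\Delta_{\H^{k+1}}$ takes on the corresponding generalized eigenfunction. On the compact quotient, this generalized eigenfunction is replaced by the genuine eigenfunction $\psi_\ell$ with $-\Delta_{\Sigma^{k+1}}\psi_\ell=\lambda_\ell\psi_\ell$, so the role of $\xi^2+\frac{k^2}{4}$ is played by $\lambda_\ell$. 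Defining $b_\ell=\sqrt{\lambda_\ell-(k/2)^2}$ (a nonnegative real when $\lambda_\ell\geq k^2/4$, and purely imaginary otherwise), we have $b_\ell^2+\frac{k^2}{4}=\lambda_\ell$, so substituting $\xi\mapsto b_\ell$ into the symbol $\Theta_\gamma^{(m)}(\xi)$ of \eqref{symbol} yields precisely \eqref{eq:eigenvaluesTheta}. Concretely: $P_\gamma^{(m)}\psi_\ell=\Theta_\gamma^{(m)}(b_\ell)\,\psi_\ell=\Theta_{m,\ell}\,\psi_\ell$, and $E_m\otimes\psi_\ell$ is an eigenfunction of $P_\gamma^{\gp}$ on $\S^{n-k-1}\times\Sigma^{k+1}$ with eigenvalue $\Theta_{m,\ell}$. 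Running over all $m,\ell\in\N\cup\{0\}$ exhausts an orthonormal basis of $L^2(\S^{n-k-1}\times\Sigma^{k+1})$, so this is the full spectrum, and it is discrete because each $\Theta_{m,\ell}$ has finite multiplicity and $\Theta_{m,\ell}\to\infty$ (the principal symbol of $P_\gamma^{\gp}$ is that of $(-\Delta)^\gamma$, consistent with the asymptotics $\Theta_\gamma^{(m)}(\xi)\sim |\xi|^{2\gamma}$ and $a_m\to\infty$).

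I would also check the one subtlety needed to make the substitution $\xi\mapsto b_\ell$ legitimate when $\lambda_\ell<k^2/4$, i.e.\ when $b_\ell$ is imaginary (this can only happen for finitely many small $\ell$, and only when $k\geq 2$, since $\lambda_0=0$ always gives $b_0=\frac{k}{2}i$). Here one must verify that $\Theta_\gamma^{(m)}(\xi)$, which a priori is defined and real-analytic for $\xi\in\R$, extends holomorphically to a neighborhood of the imaginary segment $\{it:|t|\le k/2\}$ and that $\Theta_{m,\ell}$ as written is still real and positive; this follows because the product of $\Gamma$-quotients in \eqref{symbol} is $\Theta_\gamma^{(m)}(\xi)=\big|\tfrac{\Gamma(\frac{1+\gamma}{2}+\frac{a_m+\xi i}{2})}{\Gamma(\frac{1-\gamma}{2}+\frac{a_m+\xi i}{2})}\big|^2$ for real $\xi$, and for $\xi=it$ the two factors become genuine real $\Gamma$-quotients $\tfrac{\Gamma(\frac{1+\gamma}{2}+\frac{a_m-t}{2})}{\Gamma(\frac{1-\gamma}{2}+\frac{a_m-t}{2})}\tfrac{\Gamma(\frac{1+\gamma}{2}+\frac{a_m+t}{2})}{\Gamma(\frac{1-\gamma}{2}+\frac{a_m+t}{2})}$, which is positive precisely under the hypothesis $k<\frac{n}{2}-\gamma$ guaranteeing the arguments stay positive (so no poles of $\Gamma$ are crossed); this is the same condition ensuring $\tfrac{n^2}{4}-\gamma^2<\lambda_1(-\Delta_{\g^+})$ on the filling, under which the scattering construction is valid. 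The main obstacle, then, is not a computation but this bookkeeping: justifying that the scattering/Definition~\ref{def:fracLap-fraccurv} construction descends to the compact quotient and that the meromorphic-in-$\xi$ symbol may be evaluated at the possibly-imaginary points $b_\ell$ while remaining the genuine spectrum of a self-adjoint operator — both of which are handled by invoking \cite{ACDFGW} together with the standing assumption $1\le k<\frac{n}{2}-\gamma$.
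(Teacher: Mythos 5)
Your proposal is correct and follows essentially the same route as the paper: the paper's proof is precisely ``apply Proposition~\ref{prop:symbol} to $\pi$-invariant functions on $\H^{k+1}$ with $\xi=b_\ell$, keeping in mind \eqref{Helgason},'' and your expansion of this (descent to the quotient, substitution $\xi^2+\tfrac{k^2}{4}\mapsto\lambda_\ell$, evaluation of the symbol at imaginary $b_\ell$ with positivity guaranteed by $k<\tfrac{n}{2}-\gamma$) fills in exactly the steps the authors leave implicit. One small correction to a parenthetical aside: imaginary $b_\ell$ is not restricted to $k\geq2$ --- for $k=1$ one has $b_0=\tfrac12 i$, and closed hyperbolic surfaces may have arbitrarily many eigenvalues below $\tfrac14$ (this is in fact the engine behind Theorem~A); since you verify the imaginary case in full anyway, this slip does not affect the argument.
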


\begin{proof}
This follows from Proposition \ref{prop:symbol}, applied to functions $w_m\colon \H^{k+1}\to\R$ that are invariant under the discrete cocompact group $\pi$ of isometries of $\H^{k+1}$ and $\xi=b_\ell$, keeping in mind \eqref{Helgason}.
\end{proof}

\begin{remark}
 In \eqref{eq:ambl}, we use the standard convention that, if $\lambda_\ell<\big(\frac{k}{2}\big)^2$, then
 \begin{equation*}
    \textstyle \sqrt{\lambda_\ell-\big(\frac{k}{2}\big)^2} :=i \sqrt{\big(\frac{k}{2}\big)^2-\lambda_\ell}.
 \end{equation*}
\end{remark}

\begin{corollary}
For all $1\leq k<\frac{n}{2}-\gamma$, the fractional curvature of $(\S^n\setminus\S^k,\gtriv)$ is
\begin{equation}\label{eq:trivial-fractional-curvature}
Q_\gamma^{\gtriv} = \Theta_{0,0} = 4^{\gamma}\, \frac{\Gamma\big(\frac{n+2\gamma}{4}\big)}{\Gamma\big(\frac{n-2\gamma}{4}\big)} \,\frac{\Gamma\big(\frac{n-2k+2\gamma}{4}\big)}{\Gamma\big(\frac{n-2k-2\gamma}{4}\big)}.
\end{equation}
\end{corollary}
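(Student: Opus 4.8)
The plan is to recognize the constant function $1$ as the $P_\gamma^{\gp}$-eigenfunction with indices $m=\ell=0$ and then read off its eigenvalue from \Cref{prop:eigenvalues-P}. First I would recall that, by \Cref{def:fracLap-fraccurv}, the fractional curvature is $Q_\gamma^{\gtriv}=P_\gamma^{\gtriv}(1)$, and that $\gtriv=\phi^{*}\gp$ with $\phi$ a diffeomorphism, so $(\S^n\setminus\S^k,\gtriv)$ is isometric to $(\S^{n-k-1}\times\H^{k+1},\gp)$ and it suffices to compute the (constant) value of $P_\gamma^{\gp}(1)$. Since the constant function descends to every compact quotient $\S^{n-k-1}\times\Sigma^{k+1}$, and the scattering construction (the solution $W$ of \eqref{eq:PDE} with Dirichlet datum $1$, which is itself invariant) passes to the associated covering of Poincar\'e--Einstein fillings, this value agrees with the corresponding eigenvalue of $P_\gamma^{\gp}$ on any fixed closed hyperbolic quotient $\Sigma^{k+1}=\H^{k+1}/\pi$.

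Next, on $\S^{n-k-1}\times\Sigma^{k+1}$ the constant $1$ is the product of the ground-state Laplace eigenfunctions on each factor: the constant on $\S^{n-k-1}$, for which $m=0$ and $\mu_0=0$, and the constant on $\Sigma^{k+1}$, for which $\ell=0$ and $\lambda_0=0$. By \Cref{prop:eigenvalues-P}, it is therefore a $P_\gamma^{\gp}$-eigenfunction with eigenvalue $\Theta_{0,0}$, whence $Q_\gamma^{\gtriv}=\Theta_{0,0}$.

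It then remains to evaluate $\Theta_{0,0}$ from \eqref{eq:eigenvaluesTheta}. Plugging $\mu_0=0$ into \eqref{am} gives $a_0=\frac{n-k-2}{2}$, which is nonnegative since $1\le k<\frac n2-\gamma$ forces the integer $k$ to satisfy $k\le n-2$; plugging $\lambda_0=0$ into \eqref{eq:ambl} and using the sign convention from the preceding remark gives $b_0=i\frac k2$, so that $b_0\,i=-\frac k2$. Hence $\frac{a_0+b_0 i}{2}=\frac{n-2k-2}{4}$ and $\frac{a_0-b_0 i}{2}=\frac{n-2}{4}$, and adding $\frac{1\pm\gamma}{2}$ turns the four Gamma arguments in \eqref{eq:eigenvaluesTheta} into $\frac{n-2k+2\gamma}{4}$, $\frac{n-2k-2\gamma}{4}$, $\frac{n+2\gamma}{4}$, $\frac{n-2\gamma}{4}$; collecting the factors yields exactly \eqref{eq:trivial-fractional-curvature}.

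There is no genuine obstacle here; the only point that deserves a careful sentence is the transition between the noncompact model $\S^{n-k-1}\times\H^{k+1}$---on which the constant function is not square-integrable in the $\H^{k+1}$ variable, so that $P_\gamma^{\gp}(1)$ is not literally covered by the $L^2$ statements used to prove \Cref{prop:symbol,prop:eigenvalues-P}---and its compact quotients. One may either observe that the bottom $\frac{k^2}{4}$ of the $L^2$-spectrum of $-\Delta_{\H^{k+1}}$ corresponds to $\xi=i\frac k2$ in \eqref{Helgason}, so that \eqref{symbol} with $m=0$ evaluated at $\xi=i\frac k2$ directly produces $P_\gamma^{(0)}(1)$, or else simply argue on a fixed compact quotient via \Cref{prop:eigenvalues-P} as above, using that the fractional curvature is unaffected by passing to the covering.
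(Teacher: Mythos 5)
Your proposal is correct and follows essentially the same route as the paper: identify $Q_\gamma^{\gtriv}=P_\gamma^{\gp}(1)$ with the eigenvalue $\Theta_{0,0}$ of \Cref{prop:eigenvalues-P} via the local isometry with a compact quotient, and then substitute $a_0=\frac{n-k-2}{2}$ and $b_0=\frac{k}{2}i$ into \eqref{eq:eigenvaluesTheta}. Your extra remark on why the noncompact model (where $1$ is not $L^2$ in the $\H^{k+1}$ variable) must be handled by passing to a quotient or by evaluating the symbol at $\xi=\frac{k}{2}i$ is a welcome clarification of a point the paper dispatches with the phrase ``locally isometric.''
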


\begin{proof}
By \Cref{def:fracLap-fraccurv} and \Cref{prop:eigenvalues-P}, we have
$Q_\gamma^{\gtriv}= P_\gamma^{\gp}(1)=\Theta_{0,0}$, since $(\S^{n-k-1}\times\Sigma^{k+1},\gp)$ is locally isometric to $(\S^{n-k-1}\times\H^{k+1},\gp)$, which, by definition, is isometric to $(\S^n\setminus\S^k,\gtriv)$. Formula \eqref{eq:trivial-fractional-curvature} then follows from \eqref{eq:eigenvaluesTheta}, using that $a_0=\frac{n-k-2}{2}$ and $b_0=\frac{k}{2}i$.
\end{proof}

We shall also denote the above quantity \eqref{eq:trivial-fractional-curvature} by $Q_\gamma(n,k)$, as in \eqref{eq:Qgamma-prodmetric}.

\begin{remark}
Clearly, the only eigenvalues $\Theta_{m,\ell}$ that are independent of the cocompact group $\pi$ such that $\Sigma^{k+1}=\H^{k+1}/\pi$ are those of the form $\Theta_{m,0}$. 
Eigenfunctions corresponding to $\Theta_{m,0}$ are compositions of the projection $\S^{n-k-1}\times \Sigma^{k+1}\to \S^{n-k-1}$ onto the first factor with degree $m$ spherical harmonics on $\S^{n-k-1}$. 
\end{remark}

Let us observe that $\Theta_{m,\ell}$ are positive real numbers, for all $m,\ell\in\N\cup\{0\}$. Indeed, from \eqref{eq:eigenvaluesTheta} and \eqref{eq:ambl}, we have that $\Theta_{m,\ell}=\Theta(a_m,b_\ell)$, where
\begin{equation}\label{eq:theta-fct}
\Theta(a,b):=4^\gamma\,\frac{\Gamma\big(\frac{1+\gamma}{2}+\frac{a+b i}{2} \big)}{\Gamma\big(\frac{1-\gamma}{2}+\frac{a+b i}{2} \big)}\,\frac{\Gamma\big(\frac{1+\gamma}{2}+\frac{a-b i}{2} \big)}{\Gamma\big(\frac{1-\gamma}{2}+\frac{a-b i}{2} \big)}.
\end{equation}
Clearly, $\Theta(a,b)>0$ for all $a\geq a_0=\frac{n-k-2}{2}$ and $b$ which is either purely imaginary and $|b|\leq \frac{k}{2}$, or real and $b\geq0$, exactly as $a_m$ and $b_\ell$ in \eqref{am},\eqref{eq:ambl}, since
$\Gamma(\overline{z})=\overline{\Gamma(z)}$, and $1\leq k<\frac{n}{2}-\gamma$ implies that $\operatorname{Re}\left[\tfrac{1\pm \gamma}{2}+\frac{a\pm b i}{2}\right]>0$,
i.e., all complex numbers $z$ on which $\Gamma(z)$ is evaluated in \eqref{eq:theta-fct} have positive real part.

Moreover, the function $\Theta(a,b)$ enjoys the following monotonicity properties:

\begin{lemma}\label{lemma:monotonicity}
Given $1\leq k<\frac{n}{2}-\gamma$, the function $\Theta(a,b)$ in \eqref{eq:theta-fct} satisfies:
\begin{enumerate}[label = \rm (\roman*),leftmargin=*,itemsep=2pt]
  \item $\frac{\partial \Theta}{\partial a}(a,\beta i) > 0$ for all $a\geq a_0$, and $0<\beta \leq \tfrac{k}{2}$,
  \item  $\frac{\partial \Theta}{\partial \beta}(a,\beta i) < 0$ for all $a\geq a_0$, and $0<\beta \leq \tfrac{k}{2}$,
  \item $\frac{\partial \Theta}{\partial b}(a,b) > 0$ for all $a\geq a_0$, and $b > 0$.
\end{enumerate}
\end{lemma}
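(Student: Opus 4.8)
The plan is to reduce all three inequalities to the sign of the digamma difference
\[
G(w):=\psi\!\left(\tfrac{1+\gamma}{2}+w\right)-\psi\!\left(\tfrac{1-\gamma}{2}+w\right),\qquad \psi:=\tfrac{\Gamma'}{\Gamma},
\]
and then to exploit the elementary identity, valid whenever $\operatorname{Re}\!\big(\tfrac{1-\gamma}{2}+w\big)>0$,
\begin{equation*}
G(w)=\sum_{j\ge 0}\left(\frac{1}{j+\frac{1-\gamma}{2}+w}-\frac{1}{j+\frac{1+\gamma}{2}+w}\right)=\gamma\sum_{j\ge 0}\frac{1}{\big(j+\frac{1-\gamma}{2}+w\big)\big(j+\frac{1+\gamma}{2}+w\big)},
\end{equation*}
which follows from the standard series for $\psi$. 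Two consequences will be used repeatedly: if $w$ is real with $\tfrac{1-\gamma}{2}+w>0$, then $G(w)>0$ and $G$ is strictly decreasing there (each summand is positive and decreasing in $w$); and for complex $w$ one has $\overline{G(w)}=G(\overline w)$, since $\tfrac{1\pm\gamma}{2}$ are real. Throughout, the standing hypothesis $1\le k<\tfrac n2-\gamma$ (together with $\beta\le\tfrac k2$ in cases (i)--(ii)) is exactly what guarantees that every argument of $\Gamma$ below lies in the open right half-plane, so that the differentiations and the series rearrangement are legitimate: for $a\ge a_0=\tfrac{n-k-2}{2}$ one has $\tfrac{1-\gamma}{2}+\tfrac a2\ge\tfrac{n-k-2\gamma}{4}>0$, and for $b=\beta i$ with $0<\beta\le\tfrac k2$ the real arguments $\tfrac{1-\gamma}{2}+\tfrac{a\pm\beta}{2}$ are bounded below by $\tfrac{n-2k-2\gamma}{4}>0$.

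For (i) and (ii) I would substitute $b=\beta i$, which makes all arguments real: with $u:=\tfrac{a-\beta}{2}$, $v:=\tfrac{a+\beta}{2}$ and $F(t):=\Gamma\!\big(\tfrac{1+\gamma}{2}+t\big)/\Gamma\!\big(\tfrac{1-\gamma}{2}+t\big)$ one has $\Theta(a,\beta i)=4^{\gamma}F(u)F(v)$ and $\tfrac{d}{dt}\log F=G$. Since $\partial_a u=\partial_a v=\tfrac12$ and $\partial_\beta u=-\tfrac12$, $\partial_\beta v=\tfrac12$, differentiating the logarithm gives $\partial_a\Theta(a,\beta i)=\tfrac12\big(G(u)+G(v)\big)\Theta(a,\beta i)$ and $\partial_\beta\Theta(a,\beta i)=\tfrac12\big(G(v)-G(u)\big)\Theta(a,\beta i)$. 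As $\Theta>0$, positivity of $G$ on the relevant half-line yields (i), while strict monotone decrease of $G$ together with $v>u$ (because $\beta>0$) yields (ii).

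For (iii), with $b>0$ real, set $z:=\tfrac{a+bi}{2}=x+iy$, where $x=\tfrac a2\ge 0$ and $y=\tfrac b2>0$; then $\tfrac{a-bi}{2}=\overline z$ and $\Theta(a,b)=4^{\gamma}F(z)F(\overline z)=4^{\gamma}|F(z)|^2$, using $F(\overline z)=\overline{F(z)}$. Since $\partial_b z=\tfrac i2$ and $\tfrac{d}{dw}\log F=G$, differentiation gives $\partial_b\Theta(a,b)=\tfrac i2\big(G(z)-\overline{G(z)}\big)\Theta(a,b)=-\big(\operatorname{Im}G(z)\big)\Theta(a,b)$. From the series for $G$, writing $p_j:=j+\tfrac{1-\gamma}{2}+x>0$ and $q_j:=j+\tfrac{1+\gamma}{2}+x>0$, each summand equals $\big[(p_j+iy)(q_j+iy)\big]^{-1}$, whose imaginary part is $-\,y(p_j+q_j)\big|(p_j+iy)(q_j+iy)\big|^{-2}<0$; hence $\operatorname{Im}G(z)<0$, and therefore $\partial_b\Theta(a,b)>0$.

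Once this framework is in place the computation is essentially two lines; the only point requiring genuine care is the domain bookkeeping, namely checking that in each of the three regimes every shifted argument of $\Gamma$ (and of $\psi$) has positive real part. This is precisely where the constraints $k<\tfrac n2-\gamma$ and $0<\beta\le\tfrac k2$ are used, and it is what makes the termwise differentiation of the quotient of Gamma functions and the rearrangement of the digamma series valid.
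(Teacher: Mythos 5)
Your proposal is correct and follows essentially the same route as the paper: logarithmic differentiation of $\Theta$, reduction to the digamma difference $\psi\big(\tfrac{1+\gamma}{2}+w\big)-\psi\big(\tfrac{1-\gamma}{2}+w\big)$, its series representation, and a termwise sign analysis (real arguments for (i)--(ii), the imaginary part for (iii)), with the same domain checks ensuring all arguments lie in the right half-plane. The packaging into the functions $F$ and $G$ is only a cosmetic difference.
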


\begin{proof}
Since $\Theta(a,b)>0$ for the values of $a$ and $b$ under consideration, it suffices to show that the logarithmic derivatives of $\Theta(a,b)$ satisfy the inequalities in (i)--(iii).
Computing these in terms of the \emph{digamma function} $\psi(z)=\frac{\Gamma'(z)}{\Gamma(z)}$, we obtain:
\begin{equation}\label{eq:der-theta}
\begin{aligned}
\frac{\partial}{\partial a} \log \Theta(a,b) &=\tfrac12\Big( \psi\big(\tfrac{1+\gamma}{2}+\tfrac{a+b i}{2}\big) +  \psi\big(\tfrac{1+\gamma}{2}+\tfrac{a-b i}{2}\big) \\
&\qquad -\psi\big(\tfrac{1-\gamma}{2}+\tfrac{a+b i}{2}\big) -\psi\big(\tfrac{1-\gamma}{2}+\tfrac{a-b i}{2}\big)\Big),\\[2pt]
\frac{\partial}{\partial b} \log \Theta(a,b) &=\tfrac{i}{2}\Big(  \psi\big(\tfrac{1+\gamma}{2}+\tfrac{a+b i}{2}\big) - \psi\big(\tfrac{1+\gamma}{2}+\tfrac{a-b i}{2}\big) \\
&\qquad -\psi\big(\tfrac{1-\gamma}{2}+\tfrac{a+b i}{2}\big)+\psi\big(\tfrac{1-\gamma}{2}+\tfrac{a-b i}{2}\big) \Big).
\end{aligned}
\end{equation}

As $\gamma>0$, recall that if $z=x+y i$, where $x=\operatorname{Re} z>0$ and $y=\operatorname{Im} z\geq 0$, then
\begin{equation}\label{eq:difference-psis}
\begin{aligned}
  \psi(z+\gamma)-\psi(z)&=\sum_{j=0}^{+\infty} \frac{1}{j+z}-\frac{1}{j+z+\gamma}=\sum_{j=0}^{+\infty} \frac{\gamma}{(j+z)(j+z+\gamma)}\\
  &= \gamma\sum_{j=0}^{+\infty} \frac{\big((j+x)(j+x+\gamma)-y^2 \big) - \big(y( 2j+2x+\gamma) \big) i }{\big((j+x)^2+y^2\big)\big((j+x+\gamma)^2+y^2\big)}.
\end{aligned}
\end{equation}
We apply \eqref{eq:difference-psis} with $z=\frac{1-\gamma}{2}+\frac{a\pm bi}{2}$.
First, if $b=\beta i$ is purely imaginary and $0<\beta\leq \frac{k}{2}$, then using \eqref{eq:difference-psis} on \eqref{eq:der-theta}, and writing $x_\pm =\frac{1-\gamma}{2}+\frac{a\pm \beta}{2}>0$, gives:
\begin{equation*}
  \frac{\partial}{\partial a} \log \Theta(a,b) = \frac\gamma2 \sum_{j=0}^{+\infty} \frac{  1}{(j+x_+)(j+x_++\gamma) } +  \frac{ 1 }{(j+x_-)(j+x_-+\gamma)}  >0,
\end{equation*}
which proves (i). Moreover, from \eqref{eq:der-theta}, we have that if $b=\beta i$, then
\begin{align*}
  \frac{\partial}{\partial \beta } \log \Theta(a,\beta i)  &= \tfrac12\Big(\psi\big(\tfrac{1+\gamma}{2} +\tfrac{a+\beta}{2}\big) -\psi\big(\tfrac{1-\gamma}{2} +\tfrac{a+\beta}{2}\big) \\
&\qquad -\Big( \psi\big(\tfrac{1+\gamma}{2} +\tfrac{a-\beta}{2}\big) -  \psi\big(\tfrac{1- \gamma}{2} +\tfrac{a-\beta}{2}\big) \Big)\Big)\\
&=  \frac\gamma2 \sum_{j=0}^{+\infty} \frac{  1}{(j+x_+)(j+x_++\gamma) } - \frac{ 1 }{(j+x_-)(j+x_-+\gamma)} \\
&=  - \frac\gamma2 \sum_{j=0}^{+\infty} \frac{ (x_+-x_-)(2j+x_+ + x_- +\gamma) }{(j+x_+)(j+x_-)(j+x_++\gamma) (j+x_-+\gamma)} <0,
\end{align*}
since $x_+ - x_- = \beta >0$, which proves (ii).

Second, if $b > 0$, then \eqref{eq:der-theta} simplifies as:
\begin{equation}\label{eq:der-theta-real} 
\begin{aligned}
\tfrac{\partial}{\partial a} \log \Theta(a,b) &= \phantom{-}\operatorname{Re}\left[ \psi\big(\tfrac{1+\gamma}{2}+\tfrac{a+b i}{2}\big) -\psi\big(\tfrac{1-\gamma}{2}+\tfrac{a+b i}{2}\big) \right],\\[2pt]
\tfrac{\partial}{\partial b} \log \Theta(a,b) &= - \operatorname{Im}\left[ \psi\big(\tfrac{1+\gamma}{2}+\tfrac{a+b i}{2}\big) -\psi\big(\tfrac{1-\gamma}{2}+\tfrac{a+b i}{2}\big) \right].
\end{aligned}
\end{equation}
Writing $x=\operatorname{Re} z =\frac{1-\gamma}{2}+\frac{a}{2}$ and $y=\operatorname{Im} z=\frac{b}{2}$, we have from \eqref{eq:difference-psis} and \eqref{eq:der-theta-real}  that
\begin{align*}
  \frac{\partial}{\partial b} \log \Theta(a,b)  &=\gamma  \sum_{j=0}^{+\infty}  \frac{  y( 2j+2x+\gamma) }{\big((j+x)^2+y^2\big)\big((j+x+\gamma)^2+y^2\big)} >0, \text{ if } b>0,
\end{align*}
which shows (iii).
\end{proof}

\begin{proposition}\label{prop:monotonicity-theta}
The eigenvalues $\Theta_{m,\ell}$ in \eqref{eq:eigenvaluesTheta} satisfy:
\begin{equation*}
\Theta_{m+1,0} > \Theta_{m,0}, \quad \text{ and }\quad \Theta_{m,\ell+1} \geq \Theta_{m,\ell}.
\end{equation*}
\end{proposition}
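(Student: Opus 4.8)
The plan is to deduce both inequalities from the monotonicity properties of $\Theta(a,b)$ established in Lemma~\ref{lemma:monotonicity}, by tracking how the parameters $a_m$ and $b_\ell$ vary as $m$ and $\ell$ increase. The key observation is that, by \eqref{am}, the sequence $a_m = \sqrt{\mu_m + \left(\tfrac{n-k-2}{2}\right)^2}$ is strictly increasing in $m$ (since $\mu_m = m(m+n-k-2)$ is strictly increasing), and $a_m \geq a_0 = \tfrac{n-k-2}{2}$ for all $m$. Likewise, by \eqref{eq:ambl}, the sequence $b_\ell = \sqrt{\lambda_\ell - \left(\tfrac{k}{2}\right)^2}$ moves monotonically as $\ell$ increases, though it may start out purely imaginary (when $\lambda_\ell < \left(\tfrac{k}{2}\right)^2$) before becoming real.

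For the first inequality, I would fix $\ell = 0$, so that $b_0 = \tfrac{k}{2}i$ is purely imaginary with $0 < \tfrac{k}{2} \leq \tfrac{k}{2}$; then $\Theta_{m,0} = \Theta(a_m, \tfrac{k}{2}i)$, and since $a_{m+1} > a_m \geq a_0$, part~(i) of Lemma~\ref{lemma:monotonicity} gives $\Theta_{m+1,0} = \Theta(a_{m+1}, \tfrac{k}{2}i) > \Theta(a_m, \tfrac{k}{2}i) = \Theta_{m,0}$ by integrating $\tfrac{\partial \Theta}{\partial a} > 0$ along the segment $[a_m, a_{m+1}]$.

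For the second inequality, I would fix $m$ and compare $\Theta_{m,\ell+1} = \Theta(a_m, b_{\ell+1})$ with $\Theta_{m,\ell} = \Theta(a_m, b_\ell)$, splitting into cases according to the sign of $\lambda_\ell - \left(\tfrac{k}{2}\right)^2$. If both $\lambda_\ell$ and $\lambda_{\ell+1}$ are $\geq \left(\tfrac{k}{2}\right)^2$, then $b_\ell, b_{\ell+1}$ are real with $0 \leq b_\ell \leq b_{\ell+1}$, and part~(iii) (together with the evenness $\Theta(a,b) = \Theta(a,-b)$ and the fact that $\Theta$ is constant at $b=0$ only in the trivial sense) gives $\Theta_{m,\ell+1} \geq \Theta_{m,\ell}$; note we only get $\geq$, not $>$, because $\lambda_{\ell+1}$ may equal $\lambda_\ell$. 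If both are $< \left(\tfrac{k}{2}\right)^2$, then $b_\ell = \beta_\ell i$, $b_{\ell+1} = \beta_{\ell+1} i$ with $\tfrac{k}{2} \geq \beta_\ell \geq \beta_{\ell+1} \geq 0$ (larger Laplace eigenvalue means smaller $\beta$), and part~(ii), which says $\tfrac{\partial \Theta}{\partial \beta}(a,\beta i) < 0$, again yields $\Theta_{m,\ell+1} \geq \Theta_{m,\ell}$. The only delicate case is the mixed one, where $\lambda_\ell < \left(\tfrac{k}{2}\right)^2 \leq \lambda_{\ell+1}$: here one inserts the intermediate value at $b = 0$ (i.e.\ the value $\Theta(a_m, 0)$), and combines the bound $\Theta(a_m, \beta_\ell i) \leq \Theta(a_m, 0)$ from part~(ii) with $\Theta(a_m, 0) \leq \Theta(a_m, b_{\ell+1})$ from part~(iii). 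I expect this case-splitting at the threshold $\left(\tfrac{k}{2}\right)^2$ to be the main (modest) obstacle; everything else is a direct application of Lemma~\ref{lemma:monotonicity} to the monotone sequences $(a_m)$ and $(b_\ell)$.
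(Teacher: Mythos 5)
Your proposal is correct and follows essentially the same route as the paper: both inequalities are obtained by applying Lemma~\ref{lemma:monotonicity}~(i) to the increasing sequence $a_m$ at $\beta=\tfrac{k}{2}$, and parts~(ii)--(iii) to the sequence $b_\ell$, which passes from purely imaginary with nonincreasing imaginary part to real and nondecreasing. Your treatment of the mixed case by inserting the intermediate value $\Theta(a_m,0)$ is just a slightly more explicit version of the paper's appeal to continuity of $\Theta$ at the threshold index $\ell_*$.
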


\begin{proof}
Since $m\mapsto a_m$ is monotonically increasing, applying \Cref{lemma:monotonicity} (i) with $\beta=\frac{k}{2}$, we have that $\Theta(a_{m+1},b_0) > \Theta(a_{m},b_0)$, i.e., $\Theta_{m+1,0} > \Theta_{m,0}$, which proves the first inequality. For the second inequality, recall that $\ell\mapsto\lambda_\ell$ is monotonically nondecreasing and unbounded. Thus, there exists $\ell_*\in\N$ such that $\ell\mapsto b_\ell$ is purely imaginary with monotonically nonincreasing imaginary part for $0\leq \ell < \ell_*$, and is real and monotonically nondecreasing for $\ell \geq \ell_*$. \Cref{lemma:monotonicity} (ii) and (iii) imply that $\Theta(a_m,b_{\ell+1}) \geq \Theta(a_m,b_{\ell})$ for all $0\leq \ell <\ell_*$ and $\ell\geq \ell_*$, respectively. It also follows from \Cref{lemma:monotonicity} (ii) and (iii) that $\Theta(a_m,b_{\ell_*}) \geq \Theta(a_m,b_{\ell_*-1})$, since $\Theta(a,b)$ is continuous. Thus, altogether, $\Theta_{m,\ell+1} \geq \Theta_{m,\ell}$ for all $m,\ell\in\N\cup\{0\}$.
\end{proof}

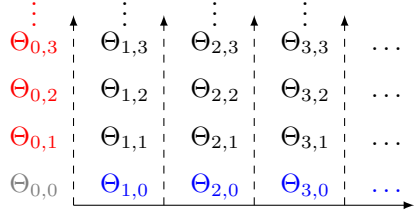
\begin{figure}[!ht]
\centering
\renewcommand{\arraystretch}{1.5}
\begin{tabular}{ccccc}
    $\color{red} \stackrel{\vdots}{\Theta_{0,3}}$\;\;\raisebox{10pt}{\tikzmark{03}} & $\stackrel{\vdots}{\Theta_{1,3}}$\;\;\raisebox{10pt}{\tikzmark{13}} & $\stackrel{\vdots}{\Theta_{2,3}}$\;\;\raisebox{10pt}{\tikzmark{23}}  & $\stackrel{\vdots}{\Theta_{3,3}}$\;\;\raisebox{10pt}{\tikzmark{33}}  & $\dots$ \\
    $\color{red}\Theta_{0,2}$\;\;              & $\Theta_{1,2}$\;\;               & $\Theta_{2,2}$\;\;               & $\Theta_{3,2}$\;\;  & $\dots$  \\
    $\color{red}\Theta_{0,1}$\;\;              & $\Theta_{1,1}$\;\;               & $\Theta_{2,1}$\;\;               & $\Theta_{3,1}$\;\;  & $\dots$  \\
    $\color{gray}\Theta_{0,0}$\;\;\raisebox{-5pt}{\tikzmark{00}} & $\color{blue}\Theta_{1,0}$\;\;\raisebox{-5pt}{\tikzmark{10}}  & $\color{blue}\Theta_{2,0}$\;\;\raisebox{-5pt}{\tikzmark{20}}  & $\color{blue}\Theta_{3,0}$\;\;\raisebox{-5pt}{\tikzmark{30}}  & $\color{blue}\dots$\raisebox{-5pt}{\tikzmark{40}}  \\
\end{tabular}

\begin{tikzpicture}[overlay,remember picture, shorten >=-3pt]
\draw[->,dashed] (pic cs:00) -- (pic cs:03);
\draw[->,dashed] (pic cs:10) -- (pic cs:13);
\draw[->,dashed] (pic cs:20) -- (pic cs:23);
\draw[->,dashed] (pic cs:30) -- (pic cs:33);
\draw[->] (pic cs:00) -- (pic cs:40);
\end{tikzpicture}
\vspace{-.1cm}
\caption{Dashed arrows indicate nondecreasing directions on the table, while the continuous arrow indicates an increasing direction.}\label{fig:ordering-thetas}
\end{figure}
\renewcommand{\arraystretch}{1}

\section{\texorpdfstring{Multiplicity of solutions on $\S^n\setminus \S^1$}{Multiplicity of solutions singular along a circle}}\label{section:thmA}

Throughout this section, we assume $n\geq4$ and $k=1$, and denote by $\Sigma^2=\H^2/\pi$ a closed hyperbolic surface of genus $\gen(\Sigma^2)\geq2$. We shall collectively denote by $\g=\gp=\g_{\S^{n-2}} + \h$ all the product metrics on $M=\S^{n-2}\times \Sigma^2$ in which $\h\in\mathcal H(\Sigma^2)$ is a  smooth hyperbolic metric on $\Sigma^2$, but we write
\begin{equation*}
  \phantom{, \qquad m\in\N\cup\{0\}, \, \ell\in\N}
  \Theta_{m,\ell}(\h)=\Theta_{m,\ell}, \qquad m\in\N\cup\{0\}, \, \ell\in\N,
\end{equation*}
to indicate the dependence of \eqref{eq:eigenvaluesTheta} on $\h$, namely via the sequence $b_\ell$ determined in \eqref{eq:ambl} by the Laplace spectrum $\{\lambda_\ell\}$ of $(\Sigma^2,\h)$. Note that $\Theta_{m,0}$, marked in blue in \Cref{fig:ordering-thetas}, are independent of $\h$ because $\lambda_0=0$ so $b_0=\tfrac{1}{2}i$ for all $\h$.
Recall that standard results for Banach manifolds can be used on the set $\mathcal H(\Sigma^2)$ of smooth hyperbolic metrics on $\Sigma^2$ with the Whitney $C^\infty$ topology, see~\cite[Remark~2.1]{BPS}.

\subsection{Morse index and nullity}
Let us compute the Morse index and nullity of $\g$ as a solution to the fractional Yamabe problem, in terms of the eigenvalues~$\Theta_{m,\ell}(\h)$.

\begin{proposition}\label{prop:Morse-idx-nullity}
Given $0<\gamma<\frac{n}{2}-1$, if $\Theta_{m,\ell}(\h) \leq \frac{n+2\gamma}{n-2\gamma} \,\Theta_{0,0}$, then $m=0$ and $\ell\geq 1$. In particular, the Morse index and nullity of $\g=\g_{\S^{n-2}} + \h$ as a solution to the fractional Yamabe problem on $M=\S^{n-2}\times \Sigma^{2}$, i.e., of the critical point $u\equiv 1$ of the functional \eqref{eq:energy} subject to the constraint \eqref{eq:constraint}, are given by:
\begin{equation}\label{eq:Morse-idx-nullity}
\begin{aligned}
\ind(\g)&=\# \left\{\ell\in \N : \Theta_{0,\ell}(\h)<\tfrac{n+2\gamma}{n-2\gamma}\, \Theta_{0,0}\right\},\\
\dim\ker J_\gamma^{\g}&=\# \left\{\ell\in \N : \Theta_{0,\ell}(\h) = \tfrac{n+2\gamma}{n-2\gamma}\, \Theta_{0,0}\right\}.
\end{aligned}
\end{equation}
\end{proposition}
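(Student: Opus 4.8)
The plan is to reduce the statement to a spectral computation for the Jacobi operator $J_\gamma^{\g}=P_\gamma^{\gp}-\tfrac{n+2\gamma}{n-2\gamma}Q_\gamma^{\gtriv}$ on $M=\S^{n-2}\times\Sigma^2$, and then to identify which eigenvalues $\Theta_{m,\ell}(\h)$ can possibly lie at or below the threshold $\tfrac{n+2\gamma}{n-2\gamma}\Theta_{0,0}$. Since $Q_\gamma^{\gtriv}=\Theta_{0,0}$ by the corollary above, the eigenvalues of $J_\gamma^\g$ are exactly $\Theta_{m,\ell}(\h)-\tfrac{n+2\gamma}{n-2\gamma}\Theta_{0,0}$, with the same eigenfunctions (products of spherical harmonics on $\S^{n-2}$ and Laplace eigenfunctions on $\Sigma^2$) described in \Cref{prop:eigenvalues-P}. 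The Morse index and nullity are, by definition, the number of negative eigenvalues and the dimension of the kernel of $J_\gamma^\g$ acting on $L^2_0(M)$; so the only subtlety beyond counting is that the constant function (i.e.\ the $(m,\ell)=(0,0)$ mode, which is exactly where $\Theta_{0,0}-\tfrac{n+2\gamma}{n-2\gamma}\Theta_{0,0}<0$) must be excluded because we work on $L^2_0(M)$.

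First I would establish the key claim: if $\Theta_{m,\ell}(\h)\leq\tfrac{n+2\gamma}{n-2\gamma}\Theta_{0,0}$ then $m=0$. The idea is to use the monotonicity in $m$ from \Cref{prop:monotonicity-theta} (namely $\Theta_{m,\ell}\geq\Theta_{m,0}$, which follows from $\Theta_{m,\ell+1}\geq\Theta_{m,\ell}$, together with $\Theta_{m+1,0}>\Theta_{m,0}$) to reduce to showing $\Theta_{1,0}>\tfrac{n+2\gamma}{n-2\gamma}\Theta_{0,0}$. Using the explicit formula \eqref{eq:eigenvaluesTheta} with $b_0=\tfrac12 i$, one has $\Theta_{m,0}=4^\gamma\frac{\Gamma(\frac{1+\gamma}{2}+\frac{a_m}{2}+\frac14)}{\Gamma(\frac{1-\gamma}{2}+\frac{a_m}{2}+\frac14)}\frac{\Gamma(\frac{1+\gamma}{2}+\frac{a_m}{2}-\frac14)}{\Gamma(\frac{1-\gamma}{2}+\frac{a_m}{2}-\frac14)}$; with $k=1$ we have $a_0=\frac{n-3}{2}$ and $a_1=\sqrt{(n-2)+(\frac{n-3}{2})^2}=\frac{n-1}{2}$, and one checks that $\Theta_{0,0}=Q_\gamma(n,1)$ and $\Theta_{1,0}$ reduce to ratios of $\Gamma$-values at shifted arguments; the inequality $\Theta_{1,0}>\tfrac{n+2\gamma}{n-2\gamma}\Theta_{0,0}$ then becomes an elementary inequality between products of four $\Gamma$-quotients, which I would verify by rewriting both sides using $\Gamma(z+1)=z\Gamma(z)$ so that the comparison collapses to a rational inequality in $n$ and $\gamma$, valid for $n\geq4$ and $0<\gamma<\tfrac n2-1$. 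Given the claim, the $m\geq1$ modes contribute nothing to index or nullity, the $(0,0)$ mode is excluded since it spans the constants, and hence only modes $(0,\ell)$ with $\ell\geq1$ remain, yielding \eqref{eq:Morse-idx-nullity}.

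The main obstacle I anticipate is the verification that $\Theta_{1,0}>\tfrac{n+2\gamma}{n-2\gamma}\,\Theta_{0,0}$, i.e.\ that the single step from $m=0$ to $m=1$ already jumps above the threshold $\tfrac{n+2\gamma}{n-2\gamma}$. This is a genuine inequality on $\Gamma$-function quotients rather than a formal manipulation, and it is presumably where the hypothesis $\gamma<\tfrac n2-1$ (rather than merely $\gamma<\tfrac n2$) is used in an essential way — so it is likely that the cleanest route is to recognize $\tfrac{n+2\gamma}{n-2\gamma}\Theta_{0,0}=\tfrac{n+2\gamma}{n-2\gamma}Q_\gamma(n,1)$ as exactly $\Theta(a_0+1,\tfrac12 i)$ up to the factor coming from the shift in the first pair of $\Gamma$'s, compare term by term with $\Theta(a_1,\tfrac12 i)$, and exploit $a_1=a_0+1$ when $k=1$. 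Everything else — the identification of the spectrum of $J_\gamma^\g$, the exclusion of constants, and the translation into a counting formula — is routine once \Cref{prop:eigenvalues-P,prop:monotonicity-theta} and \Cref{def:fracLap-fraccurv} are in hand.
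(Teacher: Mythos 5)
Your proposal is correct and follows essentially the same route as the paper: reduce to the single inequality $\Theta_{1,0}>\tfrac{n+2\gamma}{n-2\gamma}\Theta_{0,0}$ via the monotonicity in \Cref{prop:monotonicity-theta}, and verify it by cancelling the common $\Gamma$-quotient and applying $\Gamma(z+1)=z\,\Gamma(z)$, which yields exactly $\Theta_{1,0}=\tfrac{n+2\gamma-2}{n-2\gamma-2}\,\Theta_{0,0}$ and hence a rational inequality in $n,\gamma$ that holds precisely because $\gamma<\tfrac{n}{2}-1$ keeps the denominator positive. The identification of the spectrum of $J_\gamma^{\g}$ and the exclusion of the constant $(0,0)$ mode on $L^2_0(M)$ are handled in your proposal just as in the paper.
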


\begin{proof}
By \eqref{am}, \eqref{eq:eigenvaluesTheta} and \eqref{eq:ambl}, as $k=1$, we have $a_1=\frac{n-1}{2}$, $a_0=\frac{n-3}{2}$, and $b_0=\frac{1}{2}i$,~so
\begin{equation}\label{eq:T10andT00}
  \Theta_{1,0}=4^\gamma\, \frac{\Gamma\big(\frac{n+2\gamma}{4} \big)}{\Gamma\big( \frac{n-2\gamma}{4} \big)}\,\frac{\Gamma\big(\frac{n+2+2\gamma}{4} \big)}{\Gamma\big(  \frac{n+2-2\gamma}{4} \big)}, \qquad
  \Theta_{0,0} = 4^{\gamma}\, \frac{\Gamma\big(\frac{n+2\gamma}{4}\big)}{\Gamma\big(\frac{n-2\gamma}{4}\big)} \,\frac{\Gamma\big(\frac{n-2+2\gamma}{4}\big)}{\Gamma\big(\frac{n-2-2\gamma}{4}\big)}.
\end{equation}
Thus, using the identity $\Gamma(z+1)=z\,\Gamma(z)$ with $z=\frac{n-2\pm2\gamma}{4}$, it follows that
\begin{equation}\label{eq:ineq1000}
  \Theta_{1,0}=\tfrac{n+2\gamma-2}{n-2\gamma-2}\,\Theta_{0,0} > \tfrac{n+2\gamma}{n-2\gamma}\,\Theta_{0,0}.
\end{equation}
By \Cref{prop:monotonicity-theta}, we have that $\Theta_{m,\ell}(\h)\geq \Theta_{1,0}$ for all $m\geq 1$ and $\ell\geq0$.
Together with \eqref{eq:ineq1000}, this shows that if $\Theta_{m,\ell}(\h) \leq \frac{n+2\gamma}{n-2\gamma} \,\Theta_{0,0}$, then $m=0$ and $\ell\geq 1$, i.e., $\Theta_{m,\ell}$ is one of the eigenvalues in red on \Cref{fig:ordering-thetas}.

As a consequence of the above, we compute the Morse index and nullity of the Jacobi operator \eqref{eq:jacobi} on the space of $L^2$ functions on $(M,\g)$ with zero average. Namely, by \Cref{prop:eigenvalues-P}, these are respectively given by:
\begin{equation*}
\begin{aligned}
\ind(\g)&=\# \left\{(m,\ell)\in \N\times \N\setminus\{(0,0)\} : \Theta_{m,\ell}(\h)<\tfrac{n+2\gamma}{n-2\gamma}\,Q_{\gamma}^{\g}\right\},\\
\dim\ker J_\gamma^{\g}&=\# \left\{(m,\ell)\in \N\times \N\setminus\{(0,0)\} : \Theta_{m,\ell}(\h)=\tfrac{n+2\gamma}{n-2\gamma}\,Q_{\gamma}^{\g}\right\},
\end{aligned}
\end{equation*}
where $Q_{\gamma}^{\g}=\Theta_{0,0}=Q_\gamma(n,1)$ is given by \eqref{eq:trivial-fractional-curvature}, since $(M,\g)$ is locally isometric to $(\S^{n-2}\times \H^{2},\gp)$. By the above, these quantities agree with \eqref{eq:Morse-idx-nullity}.
\end{proof}

Let us define a convenient positive constant that depends on $n$ and $\gamma<\frac{n}{2}-1$,
\begin{equation}\label{eq:Xi}
 \Xi :=\Theta(a_0,0) = 4^\gamma\,\frac{\Gamma\big(\frac{1+\gamma}{2}+\frac{n-3}{4} \big)^2}{\Gamma\big(\frac{1-\gamma}{2}+\frac{n-3}{4} \big)^2}= 4^\gamma\,\frac{\Gamma\big(\frac{n}{4} + \frac{\gamma}{2}-\frac14 \big)^2}{\Gamma\big( \frac{n}{4} - \frac{\gamma}{2}-\frac14  \big)^2},
\end{equation}
cf.~\eqref{eq:ambl} and \eqref{eq:theta-fct}, recalling that $a_0=\frac{n-3}{2}$.
By \Cref{prop:eigenvalues-P}, $\Xi$ is an eigenvalue of $P_\gamma^{\g}$ if and only if $\lambda=\tfrac14$ is an eigenvalue of the Laplacian on $(\Sigma^2,\h)$.
These special constants play a crucial role in our spectral analysis due to the following facts about Laplace eigenvalues $\lambda_\ell=\lambda_\ell(\Sigma^2,\h)$ of a closed hyperbolic surface $(\Sigma^2,\h)$:
\begin{enumerate}[label = \rm (\alph*)]
  \item If $\ell\geq 2\gen(\Sigma^2)-2$, then $\lambda_\ell(\Sigma,\h)\geq \frac14$ for all $\h\in\mathcal H(\Sigma^2)$;
  \item For all $\varepsilon>0$ and $\ell\in\N$, there exists $\h\in\mathcal H(\Sigma^2)$ such that $\lambda_\ell(\Sigma^2,\h)<\frac14+\varepsilon$.
\end{enumerate}
Fact (a) is the main result in \cite{small-evals}, while (b) is a well-known result of Buser, see e.g.~\cite[Thm~8.1.2]{Bu}.
From (a) and \eqref{eq:ambl}, it follows that if $\ell\geq 2\gen(\Sigma^2)-2$, then
$b_\ell\geq0$, and hence $\Theta_{0,\ell}(\h)\geq \Xi$, for all $\h\in\mathcal H(\Sigma^2)$, by the same arguments using \Cref{lemma:monotonicity} in the proof of \Cref{prop:monotonicity-theta}. On the other hand,  using (b), we can choose $\h$ to pinch $\Theta_{0,\ell}(\h)$ near $\Xi$ for arbitrarily many $\ell$'s, namely:

\begin{lemma}\label{lemma:pinch}
For all $\varepsilon>0$ and $\ell\in \N$, there exists $\h\in \mathcal{H}(\Sigma^2)$ such that
\begin{equation*}
  \Theta_{0,\ell}(\h)<\Xi +\varepsilon.
\end{equation*}
\end{lemma}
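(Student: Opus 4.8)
The plan is to reduce the claimed inequality $\Theta_{0,\ell}(\h)<\Xi+\varepsilon$ to a statement about a single Laplace eigenvalue on $(\Sigma^2,\h)$ and then quote Buser's density result, fact~(b) above. Recall that, by \eqref{eq:eigenvaluesTheta}, $\Theta_{0,\ell}(\h)=\Theta(a_0,b_\ell)$, where $a_0=\frac{n-3}{2}$ is fixed and $b_\ell=b_\ell(\h)=\sqrt{\lambda_\ell(\Sigma^2,\h)-\frac14}$ depends on $\h$ only through the $\ell$-th Laplace eigenvalue of $(\Sigma^2,\h)$; moreover $\Xi=\Theta(a_0,0)$ corresponds to $b=0$, i.e.\ to $\lambda_\ell=\frac14$. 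So the first step is simply to observe that it suffices to find $\h$ with $\lambda_\ell(\Sigma^2,\h)$ close enough to $\frac14$ that $\Theta(a_0,b_\ell)$ is within $\varepsilon$ of $\Theta(a_0,0)$.

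The second step is a continuity argument. I would fix $\ell$ and the parameter $\varepsilon>0$, and use continuity of the map $\lambda\mapsto\Theta(a_0,\sqrt{\lambda-\frac14})$ at $\lambda=\frac14$. This needs a small remark because the square root passes from the imaginary axis (when $\lambda<\frac14$) to the real axis (when $\lambda>\frac14$), but $\Theta(a,b)$ as defined in \eqref{eq:theta-fct} is continuous across $b=0$ — indeed, by the monotonicity analysis underlying \Cref{lemma:monotonicity} and \Cref{prop:monotonicity-theta}, $\Theta$ depends continuously and monotonically on $b^2\in\R$ in a neighborhood of $0$ (decreasing for $b^2<0$, increasing for $b^2>0$, with a genuine value at $b^2=0$). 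Hence there is $\delta>0$ such that $|\lambda-\tfrac14|<\delta$ implies $\Theta(a_0,\sqrt{\lambda-\tfrac14})<\Xi+\varepsilon$. By fact~(b), applied with this $\delta$ (and the same $\ell$), there exists $\h\in\mathcal H(\Sigma^2)$ with $\lambda_\ell(\Sigma^2,\h)<\frac14+\delta$; combined with the lower bound $\lambda_\ell\geq 0>\frac14-\delta$ (or, if one prefers, just noting $\Theta$ is monotone increasing in $\lambda$ near and above $\tfrac14$ so the one-sided bound already suffices), this gives $\Theta_{0,\ell}(\h)<\Xi+\varepsilon$, as desired.

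I do not anticipate a serious obstacle here; the statement is essentially a direct corollary of Buser's result, and the only mildly delicate point is making sure one is allowed to treat $\Theta$ as a continuous function of the single real parameter $\lambda$ across the threshold $\lambda=\tfrac14$, i.e.\ that the convention in the Remark after \Cref{prop:eigenvalues-P} (writing $b_\ell=i\sqrt{\frac14-\lambda_\ell}$ when $\lambda_\ell<\frac14$) glues continuously to the real branch. This is already implicit in the phrase ``since $\Theta(a,b)$ is continuous'' in the proof of \Cref{prop:monotonicity-theta}, so I would just cite that. One can even be slightly more quantitative, if desired, by noting that $\tfrac{\partial}{\partial b}\log\Theta(a_0,b)$ is bounded on a neighborhood of $b=0$ (again from \eqref{eq:der-theta-real} and the estimates in the proof of \Cref{lemma:monotonicity}), so that a linear-in-$\sqrt{|\lambda-1/4|}$ modulus of continuity holds, but for the qualitative statement this is unnecessary.
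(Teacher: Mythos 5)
Your proposal is correct and is exactly the paper's argument: the paper proves \Cref{lemma:pinch} in one line as "an immediate consequence of (b) and continuity of $\Theta$," which is precisely the reduction to Buser's result plus the continuity of $\lambda\mapsto\Theta\big(a_0,\sqrt{\lambda-\tfrac14}\big)$ across $\lambda=\tfrac14$ that you spell out. Your additional remarks on the branch gluing at $b=0$ and on monotonicity making the one-sided bound sufficient are sound but not needed beyond what the paper already records in \Cref{lemma:monotonicity} and \Cref{prop:monotonicity-theta}.
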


\begin{proof}
  This is an immediate consequence of (b) and continuity of $\Theta$, see \eqref{eq:theta-fct}.
\end{proof}

We also have a generic avoidance principle for the eigenvalues $\Theta_{0,\ell}(\h)$, namely:

\begin{lemma}\label{lemnon}
For each $\vartheta >\Xi$, the following set is open and dense in $\mathcal H(\Sigma^2)$:
\begin{equation*}
\mathcal{H}^{P_\gamma}_{\vartheta}(\Sigma^2):=\big\{\h\in \mathcal{H}(\Sigma^2) : \Theta_{0,\ell}(\h)\neq \vartheta, \, \forall \ell\in\N\big\}
\end{equation*}
\end{lemma}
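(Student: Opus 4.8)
The plan is to prove openness and density of $\mathcal{H}^{P_\gamma}_{\vartheta}(\Sigma^2)$ separately, in both cases reducing the question about the eigenvalues $\Theta_{0,\ell}(\h)$ of $P_\gamma^{\gp}$ to the corresponding question about the Laplace eigenvalues $\lambda_\ell(\Sigma^2,\h)$, which is classical. The bridge is the relation $\Theta_{0,\ell}(\h)=\Theta(a_0,b_\ell)$ from \eqref{eq:eigenvaluesTheta}, where $b_\ell=\sqrt{\lambda_\ell-\frac14}$ (using $k=1$), together with the strict monotonicity of $\Theta(a_0,\cdot)$ established in \Cref{lemma:monotonicity}(ii)--(iii): as $\lambda$ increases from $0$, the value $\Theta(a_0,\sqrt{\lambda-\frac14})$ strictly decreases on $[0,\frac14]$ down to $\Xi=\Theta(a_0,0)$ and then strictly increases on $[\frac14,\infty)$. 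Consequently, for each fixed $\vartheta>\Xi$ there are exactly two values $\lambda^-(\vartheta)\in[0,\frac14)$ and $\lambda^+(\vartheta)\in(\frac14,\infty)$ with $\Theta(a_0,\sqrt{\lambda^\pm-\frac14})=\vartheta$, and the condition $\Theta_{0,\ell}(\h)\neq\vartheta$ for all $\ell$ is equivalent to $\lambda_\ell(\Sigma^2,\h)\notin\{\lambda^-(\vartheta),\lambda^+(\vartheta)\}$ for all $\ell$, i.e.\ neither of the two ``forbidden levels'' lies in the Laplace spectrum of $(\Sigma^2,\h)$.

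For openness: the map $\h\mapsto\operatorname{spec}(\Delta_{\Sigma^2,\h})$ is continuous in the appropriate sense (eigenvalues depend continuously on $\h$ in the $C^\infty$ topology, and on any compact subinterval of $[0,\infty)$ only finitely many $\lambda_\ell$ can occur, with these varying continuously), so the set of $\h$ whose Laplace spectrum avoids the two isolated points $\lambda^\pm(\vartheta)$ is open; pulling back through the homeomorphism described above gives openness of $\mathcal{H}^{P_\gamma}_{\vartheta}(\Sigma^2)$. For density: given $\h_0$, if some $\lambda_{\ell_0}(\Sigma^2,\h_0)$ equals $\lambda^-(\vartheta)$ or $\lambda^+(\vartheta)$, one perturbs $\h_0$ within $\mathcal H(\Sigma^2)$ to split this eigenvalue off the forbidden level. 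The standard tool here is the transversality/genericity machinery for Laplace eigenvalues on a fixed manifold — for closed hyperbolic surfaces this is essentially the content of the results invoked in \cite{BPS} (cf.\ \cite[Remark~2.1]{BPS}), or one can argue directly: since $\mathcal H(\Sigma^2)$ is connected and infinite-dimensional (Teichm\"uller space, up to the conformal normalization) and the constraint ``$\lambda^{\pm}(\vartheta)\in\operatorname{spec}$'' cuts out a set of positive codimension (an analytic subvariety, by analytic perturbation theory of the Laplace spectrum along real-analytic families of hyperbolic metrics), one can always deform to leave it, and a finite induction over the finitely many $\ell$ with $\lambda_\ell$ near a forbidden level handles all offending indices at once.

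The main obstacle is the density step: making precise that the two forbidden Laplace levels can be simultaneously avoided by an arbitrarily small perturbation, uniformly over the infinitely many indices $\ell$. Here it is essential that the relevant indices are in fact finitely many — this is exactly where Fact (a) (the result of \cite{small-evals}) enters: if $\vartheta>\Xi$ then $\lambda^+(\vartheta)$ is a fixed finite number, so only $\ell\leq\ell_*(\vartheta)$ can have $\lambda_\ell$ anywhere near it, and $\lambda^-(\vartheta)<\frac14$ can only be attained by $\lambda_\ell$ with $\ell<2\gen(\Sigma^2)-2$. Thus in both cases the set of potentially offending $\ell$ is bounded, reducing the avoidance to finitely many transversality conditions, each of which can be arranged by a standard perturbation argument in $\mathcal H(\Sigma^2)$; the same kind of reasoning was carried out for the conformal Laplacian ($\gamma=1$) in \cite{BPS} and for the Paneitz operator ($\gamma=2$) in \cite{bps-imrn}, and the argument goes through verbatim here with $P_\gamma^{\gp}$ in place of those operators.
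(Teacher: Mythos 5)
Your overall strategy is the same as the paper's: use the monotonicity of $\Theta(a_0,\cdot)$ from \Cref{lemma:monotonicity} to translate the condition $\Theta_{0,\ell}(\h)\neq\vartheta$ into the condition that certain fixed real numbers are avoided by the Laplace spectrum of $(\Sigma^2,\h)$, and then invoke the genericity result \cite[Prop.~2.4]{BPS}. However, your description of the function $\lambda\mapsto\Theta\big(a_0,\sqrt{\lambda-\tfrac14}\big)$ is wrong on the imaginary branch, and this produces a spurious second ``forbidden level.'' For $\lambda\in[0,\tfrac14)$ one has $b=\beta i$ with $\beta=\sqrt{\tfrac14-\lambda}$, which is a \emph{decreasing} function of $\lambda$; since \Cref{lemma:monotonicity}(ii) says $\beta\mapsto\Theta(a_0,\beta i)$ is also decreasing, the composition is \emph{increasing} in $\lambda$ on $[0,\tfrac14]$, rising from $\Theta_{0,0}=\Theta\big(a_0,\tfrac{i}{2}\big)$ up to $\Xi=\Theta(a_0,0)$ (in particular $\Theta_{0,0}<\Xi$, not the other way around). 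Combined with \Cref{lemma:monotonicity}(iii) on $[\tfrac14,\infty)$, the map $\lambda\mapsto\Theta\big(a_0,\sqrt{\lambda-\tfrac14}\big)$ is strictly increasing on all of $[0,\infty)$, so for $\vartheta>\Xi$ there is exactly \emph{one} solution $\lambda(\vartheta)>\tfrac14$; your $\lambda^-(\vartheta)\in[0,\tfrac14)$ does not exist, because on that interval the function only takes values below $\Xi<\vartheta$. This is precisely the point of the first two sentences of the paper's proof.

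The error happens to be harmless for the conclusion --- avoiding a finite set of isolated levels in the Laplace spectrum is just as generic as avoiding one, and your openness and density arguments go through for the single level $\lambda(\vartheta)$ --- but the claimed equivalence ``$\Theta_{0,\ell}(\h)\neq\vartheta$ for all $\ell$ iff $\lambda_\ell\notin\{\lambda^-(\vartheta),\lambda^+(\vartheta)\}$ for all $\ell$'' is false as stated and must be corrected. Relatedly, your appeal to \cite{small-evals} (Fact (a)) to control the indices attaining $\lambda^-(\vartheta)<\tfrac14$ becomes unnecessary: with only one forbidden level $\lambda(\vartheta)>\tfrac14$, discreteness of the Laplace spectrum already guarantees that only finitely many $\lambda_\ell$ lie near it, which is all that \cite[Prop.~2.4]{BPS} needs.
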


\begin{proof}
By \Cref{lemma:monotonicity} (iii), the function $b\mapsto \Theta(a_0,b)$ is strictly increasing for all $b>0$, and hence $\Theta(a_0,b)>\Theta(a_0,0)=\Xi$ for all $b>0$. Moreover, if $\vartheta>\Xi$, then
\begin{equation*}
  \Theta\!\left(a_0,\sqrt{\lambda-\tfrac14}\right)=\vartheta
\end{equation*}
is satisfied by a unique value $\lambda=\lambda(\vartheta) >\tfrac14$. Thus, since $\Theta_{0,\ell}(\h)=\vartheta$ if and only if $\lambda_\ell(\Sigma^2,\h)=\lambda(\vartheta)$, the desired result follows from \cite[Prop.~2.4]{BPS}.
\end{proof}

Combining the above, we obtain the main ingredient to prove Theorem~\ref{mainthmA}:

\begin{proposition}\label{propimp}
Assume that $0<\gamma<\frac{n}{2}-1$ and that $\Xi$ in \eqref{eq:Xi} satisfies
\begin{equation}\label{ineqimp}
\Xi <\tfrac{n+2\gamma}{n-2\gamma}\, \Theta_{0,0}. 
\end{equation}
Then, given any product metric $\g=\g_{\S^{n-2}}+ \h$ on $M=\S^{n-2}\times\Sigma^2$, with $\h\in \mathcal H(\Sigma^2)$, and any $d\in \N$, there exists a real-analytic path $\h(t)$, $t\in [0,1]$, in $\mathcal H(\Sigma^2)$ such that the corresponding path of product metrics $\g(t):=\g_{\S^{n-2}} + \h(t)$ satisfies:
\begin{enumerate}[label = \rm (\roman*),leftmargin=*,itemsep=2pt]
\item $\g(0)$ is arbitrarily close to $\g$,
\item $\ker J_\gamma^{\g(0)} = \ker J_\gamma^{\g(1)}=\{0\}$,
\item $\ind(\g(1))\geq d+\ind(\g(0)).$
\end{enumerate}
\end{proposition}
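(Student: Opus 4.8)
The plan is to build the path $\h(t)$ by concatenating finitely many Buser-degeneration arcs, using the spectral input from Lemma~\ref{lemma:pinch} together with the monotonicity already established, and then perturb slightly so that the endpoints have trivial kernel. First I would fix $d\in\N$ and choose an integer $\ell_0$ large enough that $\ell_0\geq 2\gen(\Sigma^2)-2$, so that by fact (a) the eigenvalues $\Theta_{0,\ell}(\h)\geq\Xi$ for all $\ell\geq\ell_0$ and all $\h\in\mathcal H(\Sigma^2)$; this caps the Morse index contribution from large $\ell$ and is what keeps everything finite. By hypothesis \eqref{ineqimp}, there is a gap: pick $\varepsilon>0$ with $\Xi+\varepsilon<\tfrac{n+2\gamma}{n-2\gamma}\Theta_{0,0}$. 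By Lemma~\ref{lemma:pinch} applied to the index $\ell=\ell_0+d$ (say), there exists $\h_1\in\mathcal H(\Sigma^2)$ with $\Theta_{0,\ell_0+d}(\h_1)<\Xi+\varepsilon$; by the monotonicity $\Theta_{0,\ell+1}\geq\Theta_{0,\ell}$ from Proposition~\ref{prop:monotonicity-theta}, this forces $\Theta_{0,\ell}(\h_1)<\Xi+\varepsilon<\tfrac{n+2\gamma}{n-2\gamma}\Theta_{0,0}$ for \emph{all} $1\leq\ell\leq\ell_0+d$ as well, so by Proposition~\ref{prop:Morse-idx-nullity} we get $\ind(\g_{\S^{n-2}}+\h_1)\geq\ell_0+d$. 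Meanwhile $\ind(\g_{\S^{n-2}}+\h)$ is some fixed finite number $i_0$; replacing $d$ by $d+i_0$ at the outset (which is harmless, $d$ being arbitrary) we can arrange $\ind(\g_{\S^{n-2}}+\h_1)\geq d+\ind(\g_{\S^{n-2}}+\h)$ directly, i.e.\ the endpoint metric already achieves the required Morse index jump relative to the starting metric $\h$.

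Next I would connect $\h$ to $\h_1$ by a real-analytic path in $\mathcal H(\Sigma^2)$. Since $\mathcal H(\Sigma^2)$ is (modulo diffeomorphism) the Teichm\"uller space of $\Sigma^2$, which is connected and real-analytic, one can join any two points by a real-analytic arc; alternatively, as in \cite[Remark~2.1]{BPS}, one uses that the moduli/Teichm\"uller setup supports such paths. Call it $\h(t)$, $t\in[0,1]$, with $\h(0)=\h$ and $\h(1)=\h_1$. This already gives a candidate path, but its endpoints need not have trivial Jacobi kernel, which is requirement (ii). To fix this I would invoke the generic avoidance principle of Lemma~\ref{lemnon}: the kernel $\ker J_\gamma^{\g(t)}$ is nontrivial precisely when some $\Theta_{0,\ell}(\h(t))$ equals the threshold $\vartheta:=\tfrac{n+2\gamma}{n-2\gamma}\Theta_{0,0}$, which by \eqref{ineqimp} satisfies $\vartheta>\Xi$, so Lemma~\ref{lemnon} applies and $\mathcal H^{P_\gamma}_\vartheta(\Sigma^2)$ is open and dense. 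Since the Morse index is upper semicontinuous and changes only across metrics in the nullity set, a small perturbation of $\h(0)$ inside $\mathcal H^{P_\gamma}_\vartheta(\Sigma^2)$ (staying close to $\h$, securing (i)) makes $\ker J_\gamma^{\g(0)}=\{0\}$ without decreasing $\ind(\g(0))$ below $\ind(\g)$—actually the relevant inequality to preserve is $\ind(\g(1))\geq d+\ind(\g(0))$, and by choosing the perturbation inside the open dense set one can keep $\ind(\g(0))$ equal to its value at the generic nearby metric, which is $\leq\ind(\g)$ by semicontinuity in a neighborhood; similarly perturb $\h(1)$ near $\h_1$ to kill $\ker J_\gamma^{\g(1)}$ while keeping $\ind(\g(1))\geq\ell_0+d$. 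One then re-parametrizes the (slightly modified) path to be real-analytic on $[0,1]$; real-analyticity is preserved under such finite surgeries using analytic reparametrization and the fact that analytic arcs through generic points can be spliced, cf.\ the argument in \cite{BPS,bps-imrn}.

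I expect the main obstacle to be the bookkeeping around requirement (iii) combined with (ii): one must ensure that the perturbations used to achieve trivial kernel at the two endpoints do not destroy the Morse-index inequality, and that "$\g(0)$ arbitrarily close to $\g$" is compatible with "$\ind(\g(1))\geq d+\ind(\g(0))$". The clean way around this is the one I sketched: use upper semicontinuity of $\ind$ to guarantee that for $\h(0)$ in a small enough neighborhood of $\h$ one has $\ind(\g(0))\leq\ind(\g)$, so it suffices to arrange $\ind(\g(1))\geq d+\ind(\g)$, which the choice of $\h_1$ via Lemma~\ref{lemma:pinch} delivers with room to spare after enlarging $d$ by $\ind(\g)$. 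The remaining technical point—that the endpoints can simultaneously be made $\ker J=\{0\}$ and lie on a single real-analytic arc—follows the template of \cite[Prop.~2.4 and §2]{BPS}, to which I would refer for the analytic perturbation details rather than reproduce them. Everything else is a direct consequence of Propositions~\ref{prop:Morse-idx-nullity} and~\ref{prop:monotonicity-theta}, Lemmas~\ref{lemma:pinch} and~\ref{lemnon}, and facts (a)--(b).
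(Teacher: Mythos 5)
Your proposal follows essentially the same route as the paper: use Lemma~\ref{lemma:pinch} together with the monotonicity from Proposition~\ref{prop:monotonicity-theta} and the index formula \eqref{eq:Morse-idx-nullity} to drive $\ind(\g(1))$ up by pinching $\Theta_{0,\ell}$ below the threshold $\tfrac{n+2\gamma}{n-2\gamma}\Theta_{0,0}$, use the open-dense set of Lemma~\ref{lemnon} to make both endpoints nondegenerate, and join them by a real-analytic arc via connectedness of $\mathcal H(\Sigma^2)$. One small slip: the Morse index is \emph{lower} semicontinuous under perturbation (strictly negative eigenvalues persist), so a nearby nondegenerate $\g(0)$ satisfies $\ind(\g)\leq\ind(\g(0))\leq\ind(\g)+\dim\ker J_\gamma^{\g}$, not $\ind(\g(0))\leq\ind(\g)$ as you assert; this is harmless, since you can simply fix the nondegenerate $\h(0)$ first and then choose $\ell$ large relative to the finite number $\ind(\g(0))$, which is exactly the order of operations in the paper and also removes the need for your splicing/reparametrization step (pick both endpoints first, then connect them by a single analytic arc). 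The appeal to fact (a) to ``cap'' the index is also unnecessary: finiteness of the index follows from $\lambda_\ell\nearrow+\infty$, and $\Theta_{0,\ell}\geq\Xi$ alone does not place those eigenvalues above the threshold.
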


\begin{proof}
From \Cref{prop:Morse-idx-nullity} and \Cref{lemnon}, we can choose $\h(0)\in \mathcal{H}(\Sigma^2)$ arbitrarily close to $\h$, such that $\g(0)=\g_{\S^{n-2}}+\h(0)$ is nondegenerate, i.e., $\ker J_\gamma^{\g(0)}=\{0\}$. By \eqref{ineqimp}, there exists $\varepsilon>0$ such that $\Xi+\varepsilon< \tfrac{n+2\gamma}{n-2\gamma}\, \Theta_{0,0}$. Thus, \Cref{prop:Morse-idx-nullity} and \Cref{lemma:pinch,lemnon} imply that there exists $\h(1)\in\mathcal H(\Sigma^2)$ such that $\Theta_{0,\ell}(\h_1)<\Xi+\varepsilon$ for arbitrary $\ell\in\N$ and $\g(1)=\g_{\S^{n-2}}+\h(1)$ is nondegenerate. Choosing $\ell$ sufficiently large,  $\ind(\g(1))-\ind(\g(0))$ can be made arbitrarily large, see \eqref{eq:Morse-idx-nullity}. The existence of a real-analytic path $\h(t)$, $t\in [0,1]$ joining the prescribed endpoints $\h(0)$ and $\h(1)$ in $\mathcal H(\Sigma^2)$ follows from  connectedness of the real-analytic manifold $\mathcal H(\Sigma^2)$.
\end{proof}

In order to use \Cref{propimp}, we provide the following criterion to verify \eqref{ineqimp}.

\begin{proposition}\label{prop:cn}
For each $n\geq 4$, there exists a positive constant $c_{n}<\frac{n}{2}-1$, such that \eqref{ineqimp} holds if and only if $0<\gamma<c_n$.
The sequence $(c_n)_{n\geq4}$ is increasing and asymptotic to $\frac{n}{2}-1$ as $n\nearrow+\infty$. Some approximate values of $c_n$ for small $n$ are:
\begin{equation}
  c_4\approx 0.857, \quad c_5\approx 1.408,\quad c_6\approx 1.932, \quad c_7\approx 2.446, \quad c_8\approx 2.955;
\end{equation}
in particular, $c_n > \frac12$ for all $n\geq4$, $c_n > 1$ for all $n\geq 5$, and $c_n > 2$ for all $n\geq 7$.
\end{proposition}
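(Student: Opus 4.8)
The plan is to reduce inequality \eqref{ineqimp} to an elementary comparison between Gamma-function ratios, and then to analyze the resulting one-variable function of $\gamma$ for fixed $n$. First I would rewrite both sides of \eqref{ineqimp} in closed form. Using \eqref{eq:T10andT00} and \eqref{eq:ineq1000}, we have $\tfrac{n+2\gamma}{n-2\gamma}\,\Theta_{0,0} = \tfrac{n-2\gamma-2}{n-2\gamma}\cdot\tfrac{n+2\gamma}{n+2\gamma-2}\,\Theta_{1,0}$; but it is cleaner to work directly with the definitions. Writing $\Theta_{0,0}=4^\gamma\,\frac{\Gamma((n+2\gamma)/4)}{\Gamma((n-2\gamma)/4)}\,\frac{\Gamma((n-2+2\gamma)/4)}{\Gamma((n-2-2\gamma)/4)}$ and $\Xi = 4^\gamma\,\frac{\Gamma(n/4+\gamma/2-1/4)^2}{\Gamma(n/4-\gamma/2-1/4)^2}$ from \eqref{eq:Xi}, the $4^\gamma$ factors cancel, and \eqref{ineqimp} becomes
\begin{equation*}
\frac{\Gamma\big(\tfrac n4+\tfrac\gamma2-\tfrac14\big)^2}{\Gamma\big(\tfrac n4-\tfrac\gamma2-\tfrac14\big)^2} < \frac{n+2\gamma}{n-2\gamma}\,\frac{\Gamma\big(\tfrac n4+\tfrac\gamma2\big)}{\Gamma\big(\tfrac n4-\tfrac\gamma2\big)}\,\frac{\Gamma\big(\tfrac n4+\tfrac\gamma2-\tfrac12\big)}{\Gamma\big(\tfrac n4-\tfrac\gamma2-\tfrac12\big)}.
\end{equation*}
Setting $F_n(\gamma)$ equal to the ratio of right-hand side to left-hand side, the claim is that $F_n(\gamma)>1$ precisely on an interval $(0,c_n)$ with $0<c_n<\tfrac n2-1$.

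Next I would check the boundary behavior of $F_n$ on $(0,\tfrac n2-1)$. At $\gamma=0$ both sides equal $1$, so $F_n(0)=1$; I would compute $F_n'(0)$ via logarithmic derivatives in terms of the digamma function $\psi$ and show $F_n'(0)>0$, so that $F_n>1$ just to the right of $0$ — this is essentially the statement that $\Xi$ grows slower than $\tfrac{n+2\gamma}{n-2\gamma}\Theta_{0,0}$ to first order, which is plausible because the factor $\tfrac{n+2\gamma}{n-2\gamma}$ contributes a positive first-order term absent on the left. At the other end, as $\gamma\nearrow\tfrac n2-1$, the argument $\tfrac n4-\tfrac\gamma2-\tfrac12 \to 0^+$, so $\Gamma$ of it blows up; I would track that the left-hand side $\Xi$ diverges like $\big(\tfrac n4-\tfrac\gamma2-\tfrac12\big)^{-2}$ while the right-hand side diverges only like $\big(\tfrac n4-\tfrac\gamma2-\tfrac12\big)^{-1}$ (only one $\Gamma$ factor is singular there, and $\tfrac{n+2\gamma}{n-2\gamma}$ stays bounded since $k=1<\tfrac n2-\gamma$ forces $n-2\gamma>2$). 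Hence $F_n(\gamma)\to 0$ as $\gamma\nearrow\tfrac n2-1$, so $F_n<1$ near that endpoint. By the intermediate value theorem there is at least one crossing $c_n\in(0,\tfrac n2-1)$.

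To get that the crossing is unique — so that $\{\gamma: F_n(\gamma)>1\}$ is exactly one interval $(0,c_n)$ — I would show $\log F_n$ is strictly concave on $(0,\tfrac n2-1)$, or equivalently that $\tfrac{d^2}{d\gamma^2}\log F_n<0$. Writing everything in digamma/trigamma terms, $\tfrac{d^2}{d\gamma^2}\log F_n$ is a sum of $\psi'$ values at shifted arguments plus the contribution $\tfrac{d^2}{d\gamma^2}\log\tfrac{n+2\gamma}{n-2\gamma}$; using the series $\psi'(z)=\sum_{j\ge0}(j+z)^{-2}$ one can group terms so that the sign is determined termwise, exploiting that $\tfrac n4-\tfrac\gamma2-\tfrac14$ and $\tfrac n4-\tfrac\gamma2-\tfrac12$ straddle $\tfrac n4-\tfrac\gamma2$ and $\tfrac n4-\tfrac\gamma2-\tfrac14$ straddle the midpoint. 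Once concavity of $\log F_n$ with $\log F_n(0)=0$ and $(\log F_n)'(0)>0$ and $\log F_n\to-\infty$ is established, the zero set of $\log F_n - 0$ on $(0,\tfrac n2-1)$ consists of a single point $c_n$, and $F_n>1$ exactly on $(0,c_n)$. I expect the concavity estimate to be the main obstacle: termwise sign control of a second derivative of a ratio of four Gamma factors plus a rational correction is delicate, and it may be necessary instead to argue monotonicity of $F_n$ itself on $(0,c_n)$ by a more hands-on comparison, or to verify the required inequality numerically in a small range of $n$ and asymptotically for large $n$.

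Finally, for the monotonicity of $(c_n)$ and the asymptotics $c_n\sim\tfrac n2-1$, I would substitute $\gamma = \tfrac n2-1-s$ and expand $F_n$ for small $s>0$ and large $n$: the dominant balance is $\Xi\approx C\,s^{-2}$ against RHS $\approx C'\,s^{-1}$, so $F_n>1$ fails only once $s = O(1/n)$ (the constants involve $\Gamma(1/2)$-type values and a factor from $\tfrac{n+2\gamma}{n-2\gamma}\to$ bounded), which pins down $\tfrac n2-1-c_n\to 0$ at rate $O(1/n)$, giving the asymptotic statement; monotonicity of $c_n$ in $n$ then follows from monotonicity of the relevant Gamma ratios in $n$, or can simply be checked together with the tabulated values $c_4,\dots,c_8$ by evaluating $F_n$ at the claimed thresholds $\gamma=\tfrac12,1,2$. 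The numerical values themselves are obtained by solving $F_n(\gamma)=1$ numerically.
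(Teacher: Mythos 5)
Your opening reduction is exactly the paper's: cancelling the $4^\gamma$ and using $\Gamma(z+1)=z\,\Gamma(z)$, \eqref{ineqimp} becomes the displayed Gamma inequality. But the subsequent analysis has two genuine problems. First, the endpoint computation is wrong in mechanism: the Gamma arguments appearing in $\Xi$ are $\frac n4\pm\frac\gamma2-\frac14$, which tend to $\frac n2-\frac34$ and $\frac14$ as $\gamma\nearrow\frac n2-1$, so $\Xi$ stays bounded and does \emph{not} diverge like $\big(\frac n4-\frac\gamma2-\frac12\big)^{-2}$; the singular argument $\frac n4-\frac\gamma2-\frac12\to0^+$ occurs only on the right-hand side, in a \emph{denominator}, so the right-hand side tends to $0$ rather than diverging. (Your conclusion $F_n\to0$ survives, but for the opposite reason.) Second, and more seriously, your key lemma --- strict concavity of $\log F_n$ on $\big(0,\frac n2-1\big)$ --- is false in general. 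Note that $F_n(-\gamma)=1/F_n(\gamma)$, so $(\log F_n)''(0)=0$, and expanding near $\gamma=0$ one finds $(\log F_n)''(\gamma)\approx\gamma\big(\frac{32}{n^3}+\frac14\psi^{(4)}\big(\tfrac n4\big)\cdot\tfrac1{16}\big)\approx\gamma\big(\frac{32}{n^3}-\frac{24}{n^4}\big)$: the strictly convex rational factor $\log\frac{n+2\gamma}{n-2\gamma}$ dominates the tiny concavity defect of the second-differenced digamma terms, so $\log F_n$ is strictly \emph{convex} near $0$ for all moderately large $n$ (one can check this numerically already for $n=16$). Your fallbacks ("a more hands-on comparison" or numerics) do not constitute a proof of the "if and only if" for all $n\geq4$, so uniqueness of the crossing remains unproved.

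The paper avoids this entirely by a symmetrization you should adopt: the inequality is rewritten as $F\big(\frac n2-\gamma\big)<F\big(\frac n2+\gamma\big)$ for the single, $n$-independent function $F(x)=\Gamma\big(\frac x2+1\big)\Gamma\big(\frac x2-\frac12\big)/\Gamma\big(\frac x2-\frac14\big)^2$ on $(1,+\infty)$, which is unimodal with minimum at $x_0\approx1.514$ and satisfies $F(x)\sim\frac{\sqrt\pi}{\Gamma(1/4)^2(x-1)}$ as $x\searrow1$ and $F(x)\sim\frac x2$ as $x\nearrow+\infty$. Since $\frac n2>x_0$, comparing the two branches of one fixed unimodal function yields existence and uniqueness of $c_n$ with $F\big(\frac n2-c_n\big)=F\big(\frac n2+c_n\big)$, and the two asymptotic regimes of $F$ immediately give $\frac n2-1-c_n\sim\frac{2\sqrt\pi}{\Gamma(1/4)^2\,n}\to0$, i.e., the claimed asymptotics. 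This replaces a delicate $n$-dependent convexity estimate by properties of one explicit function that can be verified once.
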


\begin{proof}
From \eqref{eq:T10andT00} and \eqref{eq:Xi}, we have that \eqref{ineqimp} is equivalent to
\begin{equation*}
  \frac{\Gamma\big(\frac{n}{4} + \frac{\gamma}{2}-\frac14 \big)^2}{\Gamma\big( \frac{n}{4} - \frac{\gamma}{2}-\frac14  \big)^2} < \frac{n+2\gamma}{n-2\gamma} \,\frac{\Gamma\big(\frac{n+2\gamma}{4}\big)}{\Gamma\big(\frac{n-2\gamma}{4}\big)} \,\frac{\Gamma\big(\frac{n-2+2\gamma}{4}\big)}{\Gamma\big(\frac{n-2-2\gamma}{4}\big)}=\frac{\Gamma\big(\frac{n}{4}+\frac{\gamma}{2}+1\big)}{\Gamma\big(\frac{n}{4}-\frac{\gamma}{2}+1\big)}\, \frac{\Gamma\big(\frac{n}{4}+\frac{\gamma}{2}-\frac{1}{2}\big)}{\Gamma\big(\frac{n}{4}-\frac{\gamma}{2}-\frac{1}{2}\big)},
\end{equation*}
where the last step uses that $\Gamma(z+1)=z\,\Gamma(z)$. In turn, the above is equivalent to
\begin{equation}\label{eq:ineq-crossratio}
\frac{\Gamma\big(\frac{n}{4}-\frac{\gamma}{2}+1\big)\,\Gamma\big(\frac{n}{4}-\frac{\gamma}{2}-\frac{1}{2}\big)}{\Gamma\big(\frac{n}{4}-\frac{\gamma}{2}-\frac{1}{4}\big)^2}<\frac{\Gamma\big(\frac{n}{4}+\frac{\gamma}{2}+1\big)\,\Gamma\big(\frac{n}{4}+\frac{\gamma}{2}-\frac{1}{2}\big)}{\Gamma\big(\frac{n}{4}+\frac{\gamma}{2}-\frac{1}{4}\big)^2}.
\end{equation}
Clearly, \eqref{eq:ineq-crossratio} holds if and only if $F\big(\frac{n}{2}-\gamma\big)<F\big(\frac{n}{2}+\gamma\big)$, where $F\colon (1,+\infty)\to\R$ is
\begin{equation}
  F(x) := \frac{\Gamma\big(\tfrac{x}{2}+1\big)\,\Gamma\big(\tfrac{x}{2}-\tfrac12\big)}{\Gamma\big(\tfrac{x}{2}-\tfrac14\big)^2}.
\end{equation}
Routine computations show that $F(x)$ has a global minimum at $x=x_0\approx1.514$, is decreasing if $x<x_0$, and increasing if $x>x_0$.
Moreover, $F(x)$ is asymptotic to:
\begin{equation}\label{eq:F-asympt}
  F(x)\approx  \frac{\sqrt\pi}{\Gamma\big(\tfrac14\big)^2(x-1)}  \text{ as } x\searrow1, \qquad \text{ and} \qquad  F(x)\approx \frac{x}{2} \text{ as } x\nearrow+\infty.
\end{equation}
Thus, given $n\geq4$, as $\frac{n}{2}>x_0$, it follows that $F\big(\frac{n}{2}-\gamma\big)<F\big(\frac{n}{2}+\gamma\big)$ for all $0<\gamma<c_n$, where $c=c_n$ is the unique solution to the equation $F\big(\frac{n}{2}-c\big)=F\big(\frac{n}{2}+c\big)$. The asymptotic behavior of $c_n$ as $n\nearrow+\infty$ is an easy consequence of \eqref{eq:F-asympt}.
\end{proof}

\begin{remark}
  For $n=3$, inequality \eqref{ineqimp} does not hold for any $0<\gamma<\frac12$.
\end{remark}

\subsection{Bifurcation}
Let $\g(t)$, $t\in [0,1]$, be a path of metrics on $M=\S^{n-2}\times \Sigma^2$ with constant fractional curvature and volume given by
\begin{equation*}
\begin{aligned}
  Q_\gamma^{\g(t)} &= Q_\gamma(n,1) =4^{\gamma}\, \frac{\Gamma\big(\frac{n+2\gamma}{4}\big)}{\Gamma\big(\frac{n-2\gamma}{4}\big)} \,\frac{\Gamma\big(\frac{n-2+2\gamma}{4}\big)}{\Gamma\big(\frac{n-2-2\gamma}{4}\big)}, \\[2pt]
   \Vol(M,\g(t))&=\Vol(M,\gp)=\frac{8\pi^{(n+1)/2}}{\Gamma\big(\frac{n-1}{2}\big)}(\gen(\Sigma^2)-1). 
   \end{aligned}
\end{equation*}
 We say that $t=t_*$ is a \emph{bifurcation instant} for $\g(t)$ if there exist sequences $(t_q)_{q\in\N}$ in $[0,1]$ converging to $t_*$ and of metrics $(\g_q)_{q\in\N}$ converging to $\g(t_*)$ such that:
 \begin{enumerate}[label = \rm (\roman*),leftmargin=*,itemsep=2pt]
   \item Each $\g_q$ has constant $\gamma$-fractional curvature;
   \item $\g_q$ is conformal to $\g(t_q)$, but $\g_q\neq \g(t_q)$;
   \item $\Vol(M,\g_q)=\Vol(M,\gp)$.
 \end{enumerate}
In other words, $\g_q=u_q^{\frac{4}{n-2\gamma}} \g(t_q)$ are obtained multiplying $\g(t_q)$ by a nonconstant conformal factor $u_q\colon M\to\R$ that satisfies \eqref{eq:P-eigenvalue-prob} and \eqref{eq:constraint} with respect to the metric $\g(t_q)$. Clearly, bifurcation instants correspond to values of $t$ at which the family $\g(t)$ is not a locally unique solution to the (normalized) fractional Yamabe problem.

Following the same Bifurcation Theory framework used in \cite[Thm 3.4]{BPS}, see also \cite[Appendix A]{LPZ}, we now prove Theorem~\ref{mainthmA} in the Introduction:

\begin{proof}[Proof of Theorem~\ref{mainthmA}]
Fix $0<\gamma<c_n$ and let $\g=\g_{\S^{n-2}}+\h$ be a product metric on $M=\S^{n-2}\times\Sigma^2$, with $\h\in\mathcal H(\Sigma^2)$.
By \Cref{prop:cn}, we may apply \Cref{propimp} with any $d\in\N$ and obtain a real-analytic path of product metrics $\g(t)$, $t\in[0,1]$, on $M$ satisfying properties (i)--(iii) in \Cref{propimp}. 
Consider the corresponding $1$-parameter family of functionals $(I_t)_{t\in[0,1]}$ given by the restriction of the energy functional $E_\gamma^{\g(t)}$ given in \eqref{eq:energy} to the submanifold of $W^{\gamma,2}(M)$ defined by \eqref{eq:constraint}, with respect to the metric $\g(t)$. Recall that $u$ is a critical point of $I_t$ if and only if $u^{\frac{4}{n-2\gamma}}\g(t)$ has constant $\gamma$-fractional curvature and the same volume as $(M,\g(t))$.
Clearly, $\g(t)$ have constant $\gamma$-fractional curvature $Q^{\g(t)}_\gamma=Q_\gamma(n,1)$, so the constant function $1$ is a critical point of $I_t$ for each $t\in [0,1]$. Moreover, the critical point $1$ is nondegenerate for $t=0$ and $t=1$, and its Morse index increases by at least $d$ from $t=0$ to $t=1$, as a consequence of (ii) and (iii) in \Cref{propimp}, respectively.
Finally, $I_t$ satisfy a local Palais--Smale condition as a consequence of Fredholmness of \eqref{eq:jacobi}, so a standard variational bifurcation criterion (see e.g.~\cite[Appendix A]{LPZ}) implies that there exists at least one bifurcation instant $t_*$ for $\g(t)$.

Repeating the procedure above with $\h\in\mathcal H(\Sigma^2)$ selected from a path of hyperbolic metrics, we obtain uncountably many paths $\g(t)$ along which bifurcation occurs. These bifurcating solutions lift via the covering $\S^n\setminus \S^1\to M$ to pairwise nonhomothetic periodic solutions to the singular fractional Yamabe problem on $\S^n\setminus \S^1$, arbitrarily close to the trivial solution and with $\gamma$-fractional curvature arbitrarily close to $Q_\gamma(n,1)$ if they are chosen sufficiently close to $\g(t_*)$.
\end{proof}

\begin{remark}
The above sequences of bifurcating solutions $(\g_q)_{q\in\N}$ converge to $\g(t_*)$ in the $C^r$ topology for any $r\geq 2\gamma$. Let us sketch the proof assuming $\gamma\in(0,1)$, which is the most involved case. Writing $\g_q=u_q^{\frac{4}{n-2\gamma}}\g(t_q)$, as $t\mapsto \g(t)$ is continuous in the $C^s$ topology, $s\geq 2\gamma$, then $Q_\gamma^{\g_q}\to Q_\gamma^{\g(t_*)}$. Moreover, since the sequence $u_q$ converges to $1$ in $W^{\gamma,2}$, a standard Moser iteration argument implies convergence in $L^\infty$, see, e.g., \cite[Thm.~3.4]{MQ}. This convergence can be improved to H\"older $C^\alpha$  for some $\alpha\in(0,1)$, see, e.g.,~\cite{Kassmann}. Finally, bootstrapping the Schauder estimates from \cite[Prop.~2.8]{Silvestre}, one obtains $u_q\to 1$ in $C^r$ for any $r\geq2\gamma$. More details on regularity issues with $\gamma\in(0,1)$ can be found in \cite[Sec.~3.1]{delaunay}.
\end{remark}

\begin{remark}\label{rem:geom-nonuniqA}
It is natural to ask if the nonuniqueness proved above is \emph{geometric}, i.e., whether the different conformal factors solving \eqref{eq:P-eigenvalue-prob} yield nonisometric metrics, see \cite[Rmk~4.1]{BPS} and \cite[Rmk~2.2, Rmk.~3.8]{bp-aif}. In principle, these different conformal factors could be related by a change of variables, that is, there may exist a conformal diffeomorphism that pulls back one metric to the other (hence rendering them isometric). The only readily available information in this regard is that no bifurcating solution on $\S^n\setminus\S^1$ is isometric to the trivial solution, since the group of conformal diffeomorphisms of $(\S^{n-2}\times \H^{2},\gp)$ coincides with its isometry group. However, some of the bifurcating solutions could be isometric among themselves.
\end{remark}

\section{\texorpdfstring{Multiplicity of solutions on $\S^n\setminus \S^k$}{Multiplicity of solutions singular along a subsphere}}\label{section:thmB}

In this section, we combine the Aubin-type existence result (\Cref{prop:aubin-ineq}) with infinite towers of finite-sheeted covering maps, following a technique developed in \cite[Sec.~3]{bp-aif},
see also \cite{bps-imrn,andrade2023general,andrade2023nonuniqueness}, to prove Theorem~\ref{mainthmB}.

Recall that a group $G$ is \emph{profinite} if it is isomorphic to the limit $\varprojlim G_s$ of an inverse system $\{G_s\}_{s\in S}$ of finite groups. The \emph{profinite completion} $\widehat G$ of a group $G$ is a profinite group characterized by the universal property that any group homomorphism $G\to H$, where $H$ is profinite, factors uniquely through a homomorphism $\widehat G\to H$. 
There is a natural homomorphism $G\to \widehat G$, and $G$ is called \emph{residually finite} if this homomorphism is injective.
The relevance of these notions for our geometric application is that a closed manifold $\Sigma$ has coverings of arbitrarily large degree if and only if its fundamental group has infinite profinite completion. Endowing these arbitrarily large coverings with locally isometric Riemannian metrics, one obtains:

\begin{lemma}[Lemma~3.6 in \cite{bp-aif}]\label{lemma:tower}
  Given $V>0$ and a closed Riemannian manifold $(\Sigma,\h)$ whose fundamental group has infinite profinite completion, there exists a finite-sheeted regular covering $\Pi\colon \Sigma'\to \Sigma$ such that $\Vol(\Sigma', \Pi^*\h)>V$.
\end{lemma}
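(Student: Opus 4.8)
The plan is to produce regular (normal) coverings of $\Sigma$ of arbitrarily large degree and then pull back $\h$, using that Riemannian volume scales linearly with the degree of a finite covering.

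First, I would translate the hypothesis into group theory. Write $\pi=\pi_1(\Sigma)$, so the assumption is that $\widehat\pi$ is infinite; the point to extract is that $\pi$ then has finite-index subgroups of arbitrarily large index. This is exactly the equivalence recalled in the paragraph preceding the lemma, and it also admits a short self-contained argument: if $[\pi:H]\le C$ for every finite-index subgroup $H$, choose a finite quotient $\pi/N$ of maximal order $M\le C$; then for any finite-index normal subgroup $N'$ of $\pi$, the group $\pi/(N\cap N')$ is finite, surjects onto $\pi/N$, and has order $\le M$, so that surjection is an isomorphism and $N\subseteq N'$. Hence $N$ is contained in every finite-index normal subgroup, so $\widehat\pi\cong\pi/N$ is finite, a contradiction.

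Second, I would upgrade ordinary finite coverings to regular ones. Given a subgroup $H\le\pi$ of finite index $d$, its normal core $H_0:=\bigcap_{g\in\pi}gHg^{-1}$ is a finite intersection of conjugates of $H$, hence a normal subgroup of $\pi$ of finite index with $[\pi:H_0]\ge d$. Therefore $\pi$ has normal subgroups of arbitrarily large finite index. Choosing a normal subgroup $N_0$ of $\pi$ with index $d_0:=[\pi:N_0]>V/\Vol(\Sigma,\h)$, let $\Pi\colon\Sigma'\to\Sigma$ be the connected covering determined by $N_0$ via the Galois correspondence of covering space theory; since $N_0$ is normal of finite index, $\Pi$ is a finite-sheeted regular covering of degree $d_0$ (with deck transformation group $\pi/N_0$), and $\Sigma'$ is again a closed manifold.

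Finally, I would equip $\Sigma'$ with $\Pi^*\h$, which makes $\Pi$ a local isometry; since $\Pi$ is $d_0$-sheeted, $\Vol(\Sigma',\Pi^*\h)=d_0\,\Vol(\Sigma,\h)>V$, as required. The only nontrivial ingredient is the group-theoretic fact used in the first step (infinite profinite completion forces finite-index, hence finite-index normal, subgroups of unbounded index); passing to normal cores, invoking the Galois correspondence for coverings, and the multiplicativity of volume under finite Riemannian coverings are all routine.
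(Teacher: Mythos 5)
Your proof is correct and follows essentially the same route as the cited source: the paper itself only quotes this as Lemma~3.6 of \cite{bp-aif}, whose argument is exactly the one you give (infinite profinite completion yields finite-index normal subgroups of unbounded index, hence regular coverings of unbounded degree, and volume is multiplicative under finite Riemannian coverings). Your self-contained justification of the group-theoretic step and the passage to normal cores are both sound, so nothing further is needed.
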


For example, the fundamental group of a closed hyperbolic manifold $(\Sigma,\h)$ is infinite and residually finite by the Selberg--Malcev Lemma (see \cite[Sec.~7.5]{ratcliffe}), hence has infinite profinite completion. Thus, any closed hyperbolic manifold $(\Sigma,\h)$ satisfies the hypotheses of \Cref{lemma:tower} and hence admits finite-sheeted Riemannian coverings by hyperbolic manifolds of arbitrarily large volume.

\begin{lemma}\label{lemma:yamabe}
If a closed Riemannian manifold $(M^n,\g)$, $n\geq3$, is not conformally equivalent to $(\S^n,\g_{\S^n})$, then there exists $\varepsilon>0$ such that $\Lambda_{\gamma}(M,[\g])< \Lambda_{\gamma}(\S^{n},[\g_{\S^{n}}])$ for all $\gamma\in \left(\frac{1}{2}-\varepsilon,\frac{1}{2}+\varepsilon\right)\cup (1-\varepsilon,1+\varepsilon)$.
\end{lemma}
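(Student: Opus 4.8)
The plan is to prove \Cref{lemma:yamabe} by combining the Aubin-type existence result with the Positive Mass Theorem, in the spirit of Aubin--Schoen's resolution of the classical Yamabe problem. The strict inequality $\Lambda_\gamma(M,[\g])<\Lambda_\gamma(\S^n,[\g_{\S^n}])$ must be verified for $\gamma$ near $\tfrac12$ and near $1$, so the argument should be set up uniformly in such $\gamma$. First I would recall that, by \Cref{prop:aubin-ineq} (and its expected extension for $\gamma\in(1,2)$ near $\gamma=1$, as in \Cref{remark:gamma}), it suffices to exhibit a single test function $u\in W^{\gamma,2}(M)$ whose normalized energy quotient lies strictly below $\Lambda_\gamma(\S^n,[\g_{\S^n}])$. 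The standard choice is a truncated, rescaled version of the extremal (bubble) profile for the sphere, localized near a point $p\in M$, glued to a correction term coming from the expansion of the Green's function of $P_\gamma^\g$ at $p$.

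The key steps, in order: (1) Fix $p\in M$ and use the conformal covariance \eqref{eq:conf-cov} to pass to a conformal metric $\g_1=w^{4/(n-2\gamma)}\g$ adapted to $p$ (e.g.\ a ``conformal normal coordinate'' type normalization, or at least one making the expansion of the Green's function $G_p$ of $P_\gamma^{\g_1}$ as clean as possible). (2) Write down the Green's function expansion $G_p(x)=c_{n,\gamma}\,|x|^{-(n-2\gamma)}+ A + o(1)$ as $x\to p$, where the constant term $A$ is the \emph{fractional mass} at $p$; the Positive Mass Theorem for $P_\gamma$ — known for $\gamma=\tfrac12$ by the classical Positive Mass Theorem via the reflection/doubling trick, and for $\gamma$ near $1$ by the scalar-curvature Positive Mass Theorem together with continuity/perturbation in $\gamma$ — gives $A>0$ unless $(M,\g)$ is conformally round. (3) Insert the test function $u_\delta = \eta\,(U_\delta + \delta^{(n-2\gamma)/2} A\, G_p)$ (with $U_\delta$ the rescaled sphere bubble, $\eta$ a cutoff) into the quotient in \eqref{yam}, expand, and show the leading correction to $\Lambda_\gamma(\S^n,[\g_{\S^n}])$ is $-c\,A\,\delta^{n-2\gamma}+o(\delta^{n-2\gamma})$ with $c>0$; hence for small $\delta$ the quotient drops below the sphere value. (4) Finally, produce the $\varepsilon>0$: since the bubble energy, the constant $c$, and the positivity of $A$ all vary continuously in $\gamma$ near $\tfrac12$ and near $1$ (with $A>0$ holding on a neighborhood by the stability of the Positive Mass Theorem under the perturbation argument), the strict inequality persists on an interval, uniformly over the fixed manifold $(M,\g)$.

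The main obstacle I anticipate is Step (2): establishing that the mass term $A$ is positive for \emph{all} $\gamma$ in a neighborhood of $\tfrac12$ and of $1$, not just at the endpoints. At $\gamma=\tfrac12$ this is genuinely the Positive Mass Theorem for manifolds with boundary (via Escobar's doubling), and at $\gamma=1$ it is the usual Positive Mass Theorem; the nearby values require either an analytic continuation argument for the scattering Green's function together with a sign-stability statement, or a direct comparison. One must also be careful that the nonlocality of $P_\gamma^\g$ makes the test-function computation more delicate than in the local case — the error terms in the expansion of $u_\delta\,P_\gamma^\g u_\delta$ must be controlled using the integral (extension) representation of $P_\gamma^\g$, and the cutoff $\eta$ interacts with the tail of the bubble. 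A secondary, more bookkeeping-level difficulty is ensuring all constants and expansions are uniform in $\gamma$ on the relevant intervals so that a single $\varepsilon$ works; this should follow from real-analytic (or at least continuous) dependence of the scattering operator and its Green's function on $\gamma$, but it needs to be stated carefully. In the write-up I would likely cite the constructions of \cite{MQ,CC,ACDFGW} for the fractional setting and the Positive Mass Theorem inputs, rather than reproving them.
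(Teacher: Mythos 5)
Your proposal reaches for the heavy machinery (bubble plus Green's-function test functions and a fractional Positive Mass Theorem), and the crucial step --- positivity of the mass term $A$ for \emph{all} $\gamma$ in a neighborhood of $\tfrac12$ and of $1$ --- is precisely the open problem that this lemma is designed to circumvent. A fractional PMT is currently only available at the classical values $\gamma_0=\tfrac12$ (Escobar's boundary Yamabe problem) and $\gamma_0=1$ (Schoen/Lee--Parker); your suggested fixes (``analytic continuation of the scattering Green's function together with a sign-stability statement'') are not substantiated, and continuity of the mass in $\gamma$ is itself delicate since both the leading singularity $|x|^{-(n-2\gamma)}$ and its normalizing constant vary with $\gamma$. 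You correctly flag this as the main obstacle, but as written the proof does not close. A secondary issue: the statement also covers $\gamma\in(1,1+\varepsilon)$, where \Cref{prop:aubin-ineq} is not available, so your step (1) would make the argument conditional on the extension discussed in \Cref{remark:gamma}.

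The actual proof is much softer and requires no test-function computation at all for non-classical $\gamma$. Write $\Lambda_{\gamma}(M,[\g])=\inf_{\g_u\in[\g]}\lambda_1(P_\gamma^{\g_u})\Vol(M,\g_u)^{2\gamma/n}$. At $\gamma_0\in\{\tfrac12,1\}$ the strict inequality \eqref{strictin} is already known from the classical Positive Mass Theorem results, so by \Cref{prop:aubin-ineq} the infimum in \eqref{yam} is \emph{attained} by some $\g_u\in[\g]$, i.e.\ $\Lambda_{\gamma_0}(M,[\g])=\lambda_1(P_{\gamma_0}^{\g_u})\Vol(M,\g_u)^{2\gamma_0/n}$. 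Now freeze this single metric $\g_u$ and vary $\gamma$: testing with the first eigenfunction shows $\Lambda_\gamma(M,[\g])\leq \lambda_1(P_\gamma^{\g_u})\Vol(M,\g_u)^{2\gamma/n}$ for all nearby $\gamma$, and since $\gamma\mapsto P_\gamma^{\g_u}$ is a smooth family, the right-hand side is continuous in $\gamma$; the sphere value $\Lambda_\gamma(\S^n,[\g_{\S^n}])=\lambda_1(P_\gamma^{\g_{\S^n}})\Vol(\S^n,\g_{\S^n})^{2\gamma/n}$ is likewise continuous. The strict inequality at $\gamma_0$ therefore persists on an interval around $\gamma_0$, yielding $\varepsilon>0$. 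Note that this also handles $\gamma$ slightly above $1$ for free, since existence of a minimizer is only invoked at $\gamma_0$ itself. The PMT thus enters only as a citation at the two classical exponents, not as an input for fractional $\gamma$.
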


\begin{proof}
From \Cref{def:gamma-Yamabe-const}, we have $\Lambda_{\gamma}(M,[\g])=\inf_{\g_u\in [\g]} \lambda_{1}(P_{\gamma}^{\g_u})\Vol(M,\g_u)^{\frac{2\gamma}{n}}$; in particular, $\Lambda_{\gamma}(\S^{n},[\g_{\S^{n}}])=\lambda_{1}(P_{\gamma}^{\g_{\S^{n}}})\Vol(\S^{n},\g_{\S^{n}})^{\frac{2\gamma}{n}}$. Since $(M^n,\g)$ is not conformally equivalent to $(\S^n,\g_{\S^n})$, the Positive Mass Theorem implies that
\begin{equation}\label{strictin}
\Lambda_{\gamma_{0}}(M,[\g])< \Lambda_{\gamma_{0}}(\S^{n},[\g_{\S^{n}}])
\end{equation}
for $\gamma_{0}=\frac{1}{2}$ and $\gamma_0=1$, see \cite{Escobar,Esc} and \cite{Lee-Parker}, respectively.
Thus, by \Cref{prop:aubin-ineq}, the infimum in \eqref{yam} is achieved for such $\gamma_0$, that is, there exist metrics $\g_u\in [\g]$ such that $\Lambda_{\gamma_{0}}(M,[\g])=\lambda_{1}(P_{\gamma_{0}}^{\g_u})\Vol(M,\g_u)^{\frac{2\gamma_{0}}{n}}$ for $\gamma_{0}=\frac{1}{2}$ and $\gamma_0=1$. As the family $\gamma \mapsto P_{\gamma}^{\g_u}$ is smooth, the map $\gamma \mapsto \lambda_{1}(P_{\gamma}^{\g_u})\Vol(M,\g_u)^{\frac{2\gamma}{n}}$ is continuous in a neighborhood of these $\gamma_{0}$, which finishes the proof.
\end{proof}

\begin{proof}[Proof of Theorem~\ref{mainthmB}]
Recall from \Cref{lemma:conf-equiv} that $\S^{n}\setminus \S^{k}\cong \S^{n-k-1}\times \H^{k+1}$ for all $0< k< n$. Choose a compact quotient $M=\S^{n-k-1}\times \Sigma^{k+1}$ of the latter, and a product metric $\g=\g_{\S^{n-k-1}}+\h$ on $M$, where $\h$ is a hyperbolic metric on $\Sigma^{k+1}$.

Since $(M,\g)$ is not conformal to the round sphere, by \Cref{lemma:yamabe}, there exists $\varepsilon_0>0$ such that $\Lambda_\gamma(M,[\g])<\Lambda_{\gamma}(\S^{n},[\g_{\S^{n}}])$ for $\gamma \in \left(\left(\frac{1}{2}-\varepsilon_0,\frac{1}{2}+\varepsilon_0\right)\cup (1-\varepsilon_0,1)\right)\cap  \left(0,\frac{n}{2}-k\right)$. Here, we further impose $\gamma<1$ so that the hypotheses of \Cref{prop:aubin-ineq} are satisfied, and $\gamma<\frac{n}{2}-k$ so that $Q_\gamma^{\g}=Q_{\gamma}(n,k)>0$, cf.~\eqref{eq:Qgamma-prodmetric}.

Two cases may occur. First, if $\g$ satisfies
\begin{equation}\label{eq:toobig}
  \frac{\int_{M} Q_{\gamma}^{\g}\;\mathrm{vol}_{\g}}{\Vol(M,\g)^{\frac{n-2\gamma}{n}}} = Q_\gamma(n,k) \,\Vol(M,\g)^{\frac{2\gamma}{n}} > \Lambda_\gamma(M,[\g]),
\end{equation}
then \Cref{prop:aubin-ineq} implies that $\g$ is not a minimizing solution to the $\gamma$-fractional Yamabe problem on $(M,[\g])$, but there exists a minimizing solution $\g_{(1)}\in [\g]$, i.e., $\g_{(1)}$ attains the infimum in $\Lambda_\gamma(M,[\g])$. In this case, set $\Sigma_1^{k+1}=\Sigma^{k+1}$ and $M_1=M$.
Second, if $\g$ does not satisfy \eqref{eq:toobig}, then we use \Cref{lemma:tower} to find a finite-sheeted regular covering $\Sigma_1^{k+1}\to\Sigma^{k+1}$ such that lifting $\g$ to $M_1:=\S^{n-k-1}\times \Sigma_1^{k+1}$ via the corresponding product covering $\Pi_1\colon M_1\to M$ yields a metric $\Pi_1^*\g$ with $\gamma$-fractional curvature $Q_\gamma(n,k)$ and
\begin{equation*}
 Q_\gamma(n,k) \,\Vol\big(M_1,\Pi_1^*\g\big)^{\frac{2\gamma}{n}}> \Lambda_\gamma(\S^n,[\g_{\S^n}]).
\end{equation*}
Applying \Cref{lemma:yamabe} to $(M_1,\Pi_1^*\g)$, we obtain $\varepsilon_{1}>0$ such that \Cref{prop:aubin-ineq} yields the existence of a minimizing solution $\g_{(1)}$ to the $\gamma$-fractional Yamabe problem on $M_1$, conformal but not homothetic to $\Pi^*_1\g$, for $\gamma \in \left(\left(\frac{1}{2}-\varepsilon_1,\frac{1}{2}+\varepsilon_1\right)\cup (1-\varepsilon_1,1)\right)\cap \left(0,\frac{n}{2}-k\right)$.
At this point, we have our \emph{first} periodic solution on $\S^n\setminus \S^k$, conformal but not homothetic to the trivial solution, given by the pullback of $\g_{(1)}$ to the universal covering $\widetilde{M_1}=\S^{n-k-1}\times\H^{k+1}\cong \S^n\setminus \S^k$.

Since the fundamental group of $\Sigma_1^{k+1}$ is a finite-index normal subgroup of the fundamental group of $\Sigma^{k+1}$, it also has infinite profinite completion. Thus, we may apply \Cref{lemma:tower} to $\Sigma_1^{k+1}$ and find a finite-sheeted regular covering $\Sigma_2^{k+1}\to\Sigma_1^{k+1}$ such that the product covering $\Pi_2\colon M_2\to M_1$, with $M_2:=\S^{n-k-1}\times \Sigma_2^{k+1}$, satisfies
\begin{equation*}
 Q_\gamma^{\g_{(1)}} \,\Vol\big(M_2,\Pi_2^*\g_{(1)}\big)^{\frac{2\gamma}{n}}> \Lambda_\gamma(\S^n,[\g_{\S^n}]).
\end{equation*}
Applying \Cref{lemma:yamabe} to $(M_2,\Pi_2^*\g_{(1)})$, we find $\varepsilon_2>0$ such that \Cref{prop:aubin-ineq} yields a minimizing solution $\g_{(2)}$ to the $\gamma$-fractional Yamabe problem on $M_2$, conformal but not homothetic to $\Pi^*_2\g_{(1)}$, for $\gamma \in \left(\left(\frac{1}{2}-\varepsilon_2,\frac{1}{2}+\varepsilon_2\right)\cup (1-\varepsilon_2,1)\right)\cap \left(0,\frac{n}{2}-k\right)$. Thus, we obtain a \emph{second} periodic solution on $\S^n\setminus \S^k$ given by the pullback of $\g_{(2)}$ to the universal covering $\widetilde{M_2}=\S^{n-k-1}\times\H^{k+1}$.

Repeating this process $N$ times, we obtain the desired $N$ pairwise nonhomothetic periodic solutions to the singular $\gamma$-fractional Yamabe problem on $\S^n\setminus \S^k$, given by the pullbacks of $\g_{(j)}$, $1\leq j\leq N$, to the universal covering $\widetilde{M_j}=\S^{n-k-1}\times\H^{k+1}$,
for $\gamma \in \left(\left(\frac{1}{2}-\varepsilon,\frac{1}{2}+\varepsilon\right)\cup (1-\varepsilon,1)\right)\cap  \left(0,\frac{n}{2}-k\right)$, where $\varepsilon=\min\{\varepsilon_{0},\varepsilon_1,\varepsilon_2,\dots,\varepsilon_{N}\}$.

Finally, let us discuss the case $k=0$, for which $\S^n\setminus \S^0\cong \R^n\setminus \{0\}\cong \S^{n-1}\times \R$.
The above proof can be repeated \emph{mutatis mutandis}, using the compact quotients $M_L=\S^{n-1}\times \S^1(L)$, where $\S^1(L)=\R/L\Z$, which form a similar tower of finite-sheeted regular coverings with arbitrarily large volume.
Applying \Cref{lemma:yamabe} and \Cref{prop:aubin-ineq}, one produces minimizing solutions at each level that lift to pairwise nonhomothetic periodic solutions
in the universal covering $\widetilde{M_L}=\S^{n-1}\times \R$.
\end{proof}

\begin{remark}
Similar nonuniqueness results for integer $\gamma$ that appear in \cite{bp-aif,bps-imrn,andrade2023nonuniqueness,andrade2023general} yield \emph{infinitely many} solutions, since the iteration process above can be repeated indefinitely, i.e., one may let $N\nearrow+\infty$. However, it is unclear if this is possible in our fractional setting, as there may be no uniform lower bound on $\varepsilon_j>0$ obtained from applying \Cref{lemma:yamabe} at each step $(M_j,\Pi_j^*\g_{(j-1)})$; so we restrict to finitely many steps $1\leq j\leq N$ to ensure that $\varepsilon=\min_{0\leq j\leq N} \varepsilon_j>0$.

Incidentally, the above limitation also means that we cannot apply the clever argument in the proof of \cite[Thm.~1.3]{andrade2023general} to establish \emph{geometric} nonuniqueness of solutions. Indeed, this argument relies on letting $N\nearrow+\infty$ to show that failure of geometric nonuniqueness for infinitely many periodic solutions would imply the existence of a sequence of conformal diffeomorphisms of $\S^{n-k-1}\times\H^{k+1}=\widetilde M_j$ without any convergent subsequence, in contradiction with the Ferrand--Obata Theorem.
As in~\Cref{rem:geom-nonuniqA}, the only readily available information here is that none of the $N$ solutions on $\S^n\setminus \S^k$ produced in the proof of Theorem~\ref{mainthmB} are isometric to the trivial solution, because the group of conformal diffeomorphisms of $(\S^{n-k-1}\times \H^{k+1},\gp)$ coincides with its isometry group.
\end{remark}


\begin{thebibliography}{DdPGW17}

\bibitem[ACPW23]{andrade2023general}
{\sc J.~H. Andrade, J.~S. Case, P.~Piccione, and J.~Wei}, \emph{A general
  nonuniqueness result for {Y}amabe-type problems for conformally variational
  {R}iemannian invariants}, arXiv:2310.15798.

\bibitem[APW23]{andrade2023nonuniqueness}
{\sc J.~H. Andrade, P.~Piccione, and J.~Wei}, \emph{Nonuniqueness results for
  constant sixth order {$Q$}-curvature metrics on spheres with higher
  dimensional singularities}, arXiv:2306.00679.

\bibitem[ACD{\etalchar{+}}19]{ACDFGW}
{\sc W.~Ao, H.~Chan, A.~DelaTorre, M.~A. Fontelos, M.~d.~M. Gonz\'{a}lez, and
  J.~Wei}, \emph{On higher-dimensional singularities for the fractional
  {Y}amabe problem: a nonlocal {M}azzeo-{P}acard program}, Duke Math. J.
  \textbf{168} (2019), no.~17, 3297--3411. \MR{4030366}

\bibitem[ACD{\etalchar{+}}20]{ACDFGW:survey}
{\sc \bysame}, \emph{O{DE} methods in non-local equations}, J. Math. Study
  \textbf{53} (2020), no.~4, 370--401. \MR{4194626}

\bibitem[ACGW18]{Ao-Chan-Gonzalez-Wei}
{\sc W.~Ao, H.~Chan, M.~d.~M. Gonz\'{a}lez, and J.~Wei}, \emph{Existence of
  positive weak solutions for fractional {L}ane-{E}mden equations with
  prescribed singular sets}, Calc. Var. Partial Differential Equations
  \textbf{57} (2018), no.~6, Paper No. 149, 25. \MR{3858832}

\bibitem[ADGW20]{Ao-DelaTorre-Gonzalez-Wei:gluing}
{\sc W.~Ao, A.~DelaTorre, M.~d.~M. Gonz\'{a}lez, and J.~Wei}, \emph{A gluing
  approach for the fractional {Y}amabe problem with isolated singularities}, J.
  Reine Angew. Math. \textbf{763} (2020), 25--78. \MR{4104278}

\bibitem[BP18]{bp-aif}
{\sc R.~G. Bettiol and P.~Piccione}, \emph{Infinitely many solutions to the
  {Y}amabe problem on noncompact manifolds}, Ann. Inst. Fourier (Grenoble)
  \textbf{68} (2018), no.~2, 589--609. \MR{3803113}

\bibitem[BPS16]{BPS}
{\sc R.~G. Bettiol, P.~Piccione, and B.~Santoro}, \emph{Bifurcation of periodic
  solutions to the singular {Y}amabe problem on spheres}, J. Differential Geom.
  \textbf{103} (2016), no.~2, 191--205. \MR{3504948}

\bibitem[BPS21]{bps-imrn}
{\sc R.~G. Bettiol, P.~Piccione, and Y.~Sire}, \emph{Nonuniqueness of conformal
  metrics with constant {$Q$}-curvature}, Int. Math. Res. Not. IMRN (2021),
  no.~9, 6967--6992. \MR{4251294}

\bibitem[Bus10]{Bu}
{\sc P.~Buser}, \emph{Geometry and spectra of compact {R}iemann surfaces},
  Modern Birkh\"{a}user Classics, Birkh\"{a}user Boston, Ltd., Boston, MA,
  2010, Reprint of the 1992 edition. \MR{2742784}

\bibitem[CC16]{CC}
{\sc J.~S. Case and S.-Y.~A. Chang}, \emph{On fractional {GJMS} operators},
  Comm. Pure Appl. Math. \textbf{69} (2016), no.~6, 1017--1061. \MR{3493624}

\bibitem[CD]{Chan-DelaTorre2}
{\sc H.~Chan and A.~DelaTorre}, \emph{From fractional {L}ane-{E}mden-{S}errin
  equation -- existence, multiplicity and local behaviors via classical {ODE}
  -- to fractional {Y}amabe metrics with singularity of ``maximal'' dimension},
  arXiv:2109.05647.

  \bibitem[CSS20]{Chan-Sire-Sun}
{\sc H.~Chan, Y.~Sire, and L.~Sun}, \emph{Convergence of the fractional
  {Y}amabe flow for a class of initial data}, Ann. Sc. Norm. Super. Pisa Cl.
  Sci. (5) \textbf{21} (2020), 1703--1740. \MR{4288645}

\bibitem[CG11]{Chang-Gonzalez}
{\sc S.-Y.~A. Chang and M.~d.~M. Gonz\'{a}lez}, \emph{Fractional {L}aplacian in
  conformal geometry}, Adv. Math. \textbf{226} (2011), no.~2, 1410--1432.
  \MR{2737789}

\bibitem[CBDR23]{CDR}
{\sc S.~Cruz-Bl\'{a}zquez, A.~DelaTorre, and D.~Ruiz}, \emph{Qualitative
  properties of singular solutions to the fractional {Y}amabe problem},
  Discrete Contin. Dyn. Syst. \textbf{43} (2023), no.~11, 4027--4041.
  \MR{4644174}


\bibitem[DdPGW17]{delaunay}
{\sc A.~DelaTorre, M.~del Pino, M.~d.~M. Gonz\'{a}lez, and J.~Wei},
  \emph{Delaunay-type singular solutions for the fractional {Y}amabe problem},
  Math. Ann. \textbf{369} (2017), no.~1-2, 597--626. \MR{3694655}

\bibitem[dLPZ12]{LPZ}
{\sc L.~L. de~Lima, P.~Piccione, and M.~Zedda}, \emph{On bifurcation of
  solutions of the {Y}amabe problem in product manifolds}, Ann. Inst. H.
  Poincar\'{e} Anal. Non Lin\'{e}aire \textbf{29} (2012), no.~2, 261--277.
  \MR{2901197}

\bibitem[DSV17]{Daskalopoulos-Sire-Vazquez}
{\sc P.~Daskalopoulos, Y.~Sire, and J.-L. V\'{a}zquez}, \emph{Weak and smooth
  solutions for a fractional {Y}amabe flow: the case of general compact and
  locally conformally flat manifolds}, Comm. Partial Differential Equations
  \textbf{42} (2017), no.~9, 1481--1496. \MR{3717442}

\bibitem[Esc92a]{Escobar}
{\sc J.~F. Escobar}, \emph{Conformal deformation of a {R}iemannian metric to a
  scalar flat metric with constant mean curvature on the boundary}, Ann. of
  Math. (2) \textbf{136} (1992), no.~1, 1--50. \MR{1173925}

\bibitem[Esc92b]{Esc}
{\sc \bysame}, \emph{The {Y}amabe problem on manifolds with boundary}, J.
  Differential Geom. \textbf{35} (1992), no.~1, 21--84. \MR{1152225}

\bibitem[Gon18]{Gonzalez:survey}
{\sc M.~d.~M. Gonz\'{a}lez}, \emph{Recent progress on the fractional
  {L}aplacian in conformal geometry}, Recent developments in nonlocal theory,
  De Gruyter, Berlin, 2018, pp.~236--273. \MR{3824214}

\bibitem[GMS12]{GMS}
{\sc M.~d.~M. Gonz\'{a}lez, R.~Mazzeo, and Y.~Sire}, \emph{Singular solutions
  of fractional order conformal {L}aplacians}, J. Geom. Anal. \textbf{22}
  (2012), no.~3, 845--863. \MR{2927681}

\bibitem[GQ13]{MQ}
{\sc M.~d.~M. Gonz\'{a}lez and J.~Qing}, \emph{Fractional conformal
  {L}aplacians and fractional {Y}amabe problems}, Anal. PDE \textbf{6} (2013),
  no.~7, 1535--1576. \MR{3148060}

\bibitem[GW18]{Gonzalez-Wang}
{\sc M.~d.~M. Gonz\'{a}lez and M.~Wang}, \emph{Further results on the
  fractional {Y}amabe problem: the umbilic case}, J. Geom. Anal. \textbf{28}
  (2018), no.~1, 22--60. \MR{3745848}

\bibitem[GZ03]{graham-zworski}
{\sc C.~R. Graham and M.~Zworski}, \emph{Scattering matrix in conformal
  geometry}, Invent. Math. \textbf{152} (2003), no.~1, 89--118. \MR{1965361}

\bibitem[GM15]{Gursky-Malchiodi}
{\sc M.~J. Gursky and A.~Malchiodi}, \emph{A strong maximum principle for the
  {P}aneitz operator and a non-local flow for the {$Q$}-curvature}, J. Eur.
  Math. Soc. (JEMS) \textbf{17} (2015), no.~9, 2137--2173. \MR{3420504}

\bibitem[HY16]{HY}
{\sc F.~Hang and P.~C. Yang}, \emph{{$Q$}-curvature on a class of manifolds
  with dimension at least 5}, Comm. Pure Appl. Math. \textbf{69} (2016), no.~8,
  1452--1491. \MR{3518237}

\bibitem[JX14]{Jin-Xiong}
{\sc T.~Jin and J.~Xiong}, \emph{A fractional {Y}amabe flow and some
  applications}, J. Reine Angew. Math. \textbf{696} (2014), 187--223.
  \MR{3276166}

\bibitem[Kas09]{Kassmann}
{\sc M.~Kassmann}, \emph{A priori estimates for integro-differential operators
  with measurable kernels}, Calc. Var. Partial Differential Equations
  \textbf{34} (2009), no.~1, 1--21. \MR{2448308}

\bibitem[KMW18]{Kim-Musso-Wei}
{\sc S.~Kim, M.~Musso, and J.~Wei}, \emph{Existence theorems of the fractional
  {Y}amabe problem}, Anal. PDE \textbf{11} (2018), no.~1, 75--113. \MR{3707291}

\bibitem[LP87]{Lee-Parker}
{\sc J.~M. Lee and T.~H. Parker}, \emph{The {Y}amabe problem}, Bull. Amer.
  Math. Soc. (N.S.) \textbf{17} (1987), no.~1, 37--91. \MR{888880}

\bibitem[MN17]{Mayer-Ndiaye1}
{\sc M.~Mayer and C.~B. Ndiaye}, \emph{Barycenter technique and the {R}iemann
  mapping problem of {C}herrier-{E}scobar}, J. Differential Geom. \textbf{107}
  (2017), no.~3, 519--560. \MR{3715348}

\bibitem[MN22]{Mayer-Ndiaye-DCDS}
{\sc \bysame}, \emph{Asymptotics of the {P}oisson kernel and {G}reen's
  functions of the fractional conformal {L}aplacian}, Discrete Contin. Dyn.
  Syst. \textbf{42} (2022), no.~10, 5037--5062. \MR{4472956}

\bibitem[MN23]{Mayer-Ndiaye2}
{\sc \bysame}, \emph{Fractional {Y}amabe problem on locally
  flat conformal infinities of {P}oincar\'e--{E}instein manifolds}, Int. Math.
  Res. Not. IMRN (2023), rnad195.

\bibitem[NSS21]{Ndiaye-Sire-Sun}
{\sc C.~B. Ndiaye, Y.~Sire, and L.~Sun}, \emph{Uniformization theorems: between
  {Y}amabe and {P}aneitz}, Pacific J. Math. \textbf{314} (2021), no.~1,
  115--159. \MR{4329973}

\bibitem[OR09]{small-evals}
{\sc J.-P. Otal and E.~Rosas}, \emph{Pour toute surface hyperbolique de genre
  {$g,\ \lambda_{2g-2}>1/4$}}, Duke Math. J. \textbf{150} (2009), no.~1,
  101--115. \MR{2560109}

\bibitem[Rat94]{ratcliffe}
{\sc J.~G. Ratcliffe}, \emph{Foundations of hyperbolic manifolds}, Graduate
  Texts in Mathematics, vol. 149, Springer-Verlag, New York, 1994. \MR{1299730}

\bibitem[Sil07]{Silvestre}
{\sc L.~Silvestre}, \emph{Regularity of the obstacle problem for a fractional
  power of the {L}aplace operator}, Comm. Pure Appl. Math. \textbf{60} (2007),
  no.~1, 67--112. \MR{2270163}

\bibitem[SY88]{Schoen-Yau}
{\sc R.~Schoen and S.-T. Yau}, \emph{Conformally flat manifolds, {K}leinian
  groups and scalar curvature}, Invent. Math. \textbf{92} (1988), no.~1,
  47--71. \MR{931204}

\bibitem[Zha18]{Zhang}
{\sc R.~Zhang}, \emph{Nonlocal curvature and topology of locally conformally
  flat manifolds}, Adv. Math. \textbf{335} (2018), 130--169. \MR{3836660}

\end{thebibliography}

\newcommand{\etalchar}[1]{$^{#1}$}
\providecommand{\bysame}{\leavevmode\hbox to3em{\hrulefill}\thinspace}
\providecommand{\MR}{\relax\ifhmode\unskip\space\fi MR }
\providecommand{\MRhref}[2]{%
  \href{http://www.ams.org/mathscinet-getitem?mr=#1}{#2}
}
\providecommand{\href}[2]{#2}

\end{document}